\documentclass{amsart}
\usepackage{amsmath,amsthm,amssymb,mathtools,amscd,color,comment}
\usepackage{cleveref}
\usepackage{aliascnt}
\usepackage{marginnote}
\usepackage{xcolor}

\usepackage[all]{xy}
\usepackage{enumerate}
\usepackage{url}
\usepackage{setspace}
\allowdisplaybreaks

\theoremstyle{plain}
\newtheorem{thm}{Theorem}[section]
\newaliascnt{prop}{thm}   
\newtheorem{prop}[prop]{Proposition}
\aliascntresetthe{prop}
\newaliascnt{lem}{thm}   
\newtheorem{lem}[lem]{Lemma}
\aliascntresetthe{lem}
\newaliascnt{cor}{thm}   

\aliascntresetthe{cor}
\newaliascnt{Pb}{thm}   
\newtheorem{Pb}[Pb]{Problem}
\aliascntresetthe{Pb}
\theoremstyle{definition}
\newaliascnt{dfn}{thm}   
\newtheorem{dfn}[dfn]{Definition}
\aliascntresetthe{dfn}
\newaliascnt{rem}{thm}   
\newtheorem{rem}[rem]{Remark}
\aliascntresetthe{rem}
\newaliascnt{ex}{thm}   
\newtheorem{ex}[ex]{Example}
\aliascntresetthe{ex}
\newaliascnt{conj}{thm}

\aliascntresetthe{conj}
\crefname{conj}{Conjecture}{Conjectures}
\Crefname{conj}{Conjecture}{Conjectures}

\DeclareMathOperator{\Harm}{Harm}

\DeclareMathOperator{\LCM}{lcm}
\DeclareMathOperator{\GCD}{gcd}

\let\Im\relax
\DeclareMathOperator{\Im}{Im}

\theoremstyle{definition}

\DeclareMathOperator{\Hst}{Hst}

\newcommand{\RR}{\mathbb{R}}
\newcommand{\CC}{\mathbb{C}}
\newcommand{\NN}{\mathbb{N}}
\newcommand{\Sd}{\mathbb{S}}
\newcommand{\la}{\langle}
\newcommand{\ra}{\rangle}

\makeatletter

\title{Spherical Designs with Infinite Harmonic Strength}

\author{Ryutaro Misawa}
\address{Graduate School of Information Sciences, Tohoku University}
\email{misawa.ryutaro.q2@dc.tohoku.ac.jp}

\author{Yusaku Nishimura}
\address{Graduate School of Fundamental Science and Engineering, Waseda University, Tokyo 169-8555, Japan}
\email{n2357y@ruri.waseda.jp}

\subjclass{65D32,05E99}

\date{\today}

\begin{document}

\begin{abstract}

In this paper, we study the existence problem for spherical \(T\)-designs on the \(d\)-dimensional sphere, where \(T\) is an infinite subset of \(\mathbb N\).  We show that, if \(d\ge 2\), then a finite subset of \(S^d\) has infinite harmonic strength if and only if it is antipodal.
For \(d=1\), we show that infinite strength spherical designs are exactly cyclotomic designs, and we characterize their existence in terms of certain \(0\)-\(1\) polynomials.  We also prove that the harmonic strength of every infinite strength spherical design has the weak GCD property.
Finally, for a given infinite subset \(T\subset \mathbb N\) with the weak GCD property, we give a finite procedure to decide whether there exists \(X\subset S^1\) such that \(\operatorname{Hst}(X)=T\), and apply this criterion to concrete existence and non-existence examples.

\end{abstract}

\maketitle

\section{Introduction}\label{sec:intro}

Spherical designs, introduced by Delsarte, Goethals, and Seidel~\cite{D1977}, are finite sets of points on the unit sphere
\(\Sd^d\subset\RR^{d+1}\) that reproduce exact surface averages of low-degree polynomials.
Concretely, a non-empty finite set \(X\subset \Sd^d\) is a \emph{spherical \(t\)-design} if  

\begin{equation}
   \label{eq:cubature}
  \frac{1}{|\Sd^d|}\int_{\Sd^d}\!f(\xi)\,d\xi
  \;=\;
  \frac{1}{|X|}\sum_{\xi\in X}f(\xi)
  \quad\bigl(\deg f\le t\bigr). 
\end{equation}

When \eqref{eq:cubature} holds for every polynomial of degree at most \(t\) but fails for some polynomial of degree \(t+1\), the integer \(t\) is called the \emph{strength} of \(X\).
As a basic example, for each integer \(t\ge 1\), the vertex set of a regular \((t+1)\)-gon on \(S^1\) forms a spherical \(t\)-design.

A natural generalization replaces the initial segment \(\{1,\dots,t\}\) with an arbitrary set of degrees.
For \(T\subset\mathbb{N}\), we call \(X\subset \Sd^d\) a \emph{spherical \(T\)-design} if
\[
  \sum_{\xi\in X}P(\xi)=0
  \quad
  \bigl(P\in\Harm_{k}(d+1),\;k\in T\bigr),
\]
where \(\Harm_{k}(d+1)\) denotes the space of real homogeneous harmonic polynomials of degree \(k\) in \(d+1\) variables.
The set
\[
  \Hst(X)\coloneqq\left\{k\in\mathbb{N}~\middle|  ~ \sum_{\xi\in X}P(\xi)=0
   \text{ for all } P\in\Harm_{k}(d+1)\right\}
\]
is called the \emph{harmonic strength} of \(X\).
As a trivial example, a regular \((t+1)\)-gon on \(S^1\) is a spherical \( (\mathbb{N}\setminus (t+1)\mathbb{N})\)-design;
in particular,
\( \Hst(X)= \mathbb{N}\setminus (t+1)\mathbb{N}\) in this case.
{As another example, it is well known that for any $X\subset\Sd^d$, if $X$ is antipodal (that is, \(X = -X\)), then $\{t\in\mathbb{N}\mid \GCD(t,2)=1\}\subset \Hst(X)$ holds.}


Systematic investigations of spherical \(T\)-designs and harmonic strength have only begun recently.
Miezaki~\cite{M2013} introduced the notion of spherical \(T\)-designs and, by applying the theory of modular forms to weighted theta series, determined the harmonic strength of the shells of the lattice \(\mathbb{Z}^2\).
{Independently, Bannai et al.~\cite{BOT2015} also introduced the harmonic index and showed a lower bound on the size of $X$ whose harmonic strength includes $t$.}
Hirao, Nozaki, and Tasaka~\cite{HNT2023} studied the shells of the \(D_4\) lattice along similar lines.
They proved that for every positive integer \(m\), the normalized \(2m\)-shell of \(D_4\) is an antipodal spherical \(\{2,4,10\}\)-design on \(S^3\);
in particular, the \(2\)-shell (the \(D_4\) root system) is a tight spherical \(\{2,4,10\}\)-design.
In a subsequent paper, Hirao, Nozaki, and Tasaka~\cite{HNT2025} extended these methods to other spherical designs on \(\Sd^3\), determining their harmonic strength and establishing several uniqueness results.
More recently, Misawa, Munemasa, and Sawa~\cite{MMS24} investigated spherical designs whose harmonic strength consists only of odd degrees.
They determined that, for each positive integer \(m\), the smallest size of a non-antipodal (that is, \(X\neq -X\)) spherical design with \(\Hst(X)=\{1,3,\dots,2m-1\}\) is \(2m+1\), and obtained an analogous optimal result for interval designs.
{Although there exist several studies on harmonic strength, the inverse problem remains open; that is, for a given $d$ and $T\subset\mathbb{N}$, whether there exists an $X\subset\Sd^d$ such that $\Hst(X)=T$.}
For $d=1$, we~\cite{MN25} resolved the finite inverse problem for harmonic strength on \(S^1\).
We first constructed, for each integer \(t\ge1\), a \(5\)-point spherical design \(X\subset \Sd^1\) with \(\Hst(X)=\{t\}\), and proved that \(5\) is the minimal possible cardinality for such a design.
Building on this, we then showed that for every finite set \(T\subset\mathbb{N}\), there exists a spherical \(T\)-design \(X\subset \Sd^1\) with \(\Hst(X)=T\).
Motivated by these developments, we now ask how far such phenomena extend to arbitrary subsets \(T\subset\mathbb{N}\), possibly infinite, and to higher dimensions.
Hereafter, we refer to a finite subset $X\subset\Sd^d$ with an infinite \(\Hst(X)\) as an \emph{infinite strength spherical design}.
The following problems naturally arise.

\begin{Pb}\label{prob:classification}
Classify all infinite strength spherical designs \(X\subset \Sd^d\).
\end{Pb}

The main purpose of this paper is to provide definitive answers to \Cref{prob:classification} when $d\geq2$, and to obtain strong restrictions when $d=1$.
Our first result shows that non-antipodal designs with infinite harmonic strength cannot occur in $d\geq2$.

\begin{thm}\label{thm:antipodal}
  For any integer $d \geq 2$, a finite subset $X\subset \Sd^d$ is an infinite strength spherical design if and only if $X$ is an antipodal set.
\end{thm}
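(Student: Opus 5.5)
The ``if'' direction is the classical observation already recalled in the introduction. If $X=-X$ and $P\in\Harm_k(d+1)$ with $k$ odd, then $P(-\xi)=-P(\xi)$. Hence $\sum_{\xi\in X}P(\xi)=\sum_{\xi\in X}P(-\xi)=-\sum_{\xi\in X}P(\xi)$, so the sum vanishes, and $\Hst(X)$ contains every odd integer. In particular $\Hst(X)$ is infinite. This works for every $d$.

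For the ``only if'' direction, use the reproducing kernel. For $k\ge1$, $k\in\Hst(X)$ is equivalent to
\[
\sum_{\xi,\eta\in X}Q_k(\langle\xi,\eta\rangle)=0,
\]
where $Q_k$ is the Gegenbauer polynomial $C_k^{(d-1)/2}$ (positive multiple of the zonal kernel). This holds because that double sum equals $\sum_i\bigl(\sum_\xi e_i(\xi)\bigr)^2$ over an orthonormal basis $e_i$ of $\Harm_k(d+1)$.

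Split the pairs by their inner product $t=\langle\xi,\eta\rangle$. There are $|X|$ diagonal pairs with $t=1$. There are $2a$ ordered pairs with $t=-1$, where $a$ is the number of antipodal pairs inside $X$. All remaining pairs have $|t|<1$. Normalize by $Q_k(1)$, which grows like $k^{d-2}$ when $d\ge3$, and like $\log$ only in the Chebyshev-type case $d=2$ with $\lambda=1/2$ (there $Q_k(1)=1$).

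The key analytic input is the classical asymptotic (Szeg\H{o}) that for fixed $t\in(-1,1)$ and $\lambda=(d-1)/2>0$ (i.e. $d\ge2$),
\[
C_k^{\lambda}(t)/C_k^{\lambda}(1)=O\bigl(k^{-\lambda}\bigr)\to0 .
\]
Dividing the vanishing identity by $Q_k(1)$ and using $Q_k(-1)=(-1)^kQ_k(1)$ gives, for each $k\in\Hst(X)$,
\[
|X|+2a(-1)^k+o(1)=0 \quad (k\to\infty).
\]
Since $\Hst(X)$ is infinite, we may let $k\to\infty$ along it. Even $k$ is impossible, since then the left side tends to $|X|+2a>0$. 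So infinitely many odd $k$ occur, which forces $|X|=2a$, i.e. every point of $X$ has its antipode in $X$, so $X=-X$.

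The main obstacle is the decay of $C_k^\lambda(t)/C_k^\lambda(1)$ for interior $t$. This is exactly where $d\ge2$ enters. For $d=1$ the relevant kernel is $\cos(k\theta)$, which does not decay, and cyclotomic non-antipodal examples exist. I would cite the standard bound
\[
|C_k^\lambda(\cos\theta)|\le C\,k^{\lambda-1}(\sin\theta)^{-\lambda},
\]
compared with $C_k^\lambda(1)\sim k^{2\lambda-1}/\Gamma(2\lambda)$. This gives decay $k^{-\lambda}$ uniformly on the finitely many interior values $t$ that occur. For $d=2$ ($\lambda=1/2$, Legendre polynomials) this is the familiar estimate $|P_k(\cos\theta)|\le\sqrt{2/(\pi k\sin\theta)}$.
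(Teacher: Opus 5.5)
Your proof is correct and follows essentially the same route as the paper: you split the Gegenbauer double sum into diagonal, antipodal and interior pairs, and you show the interior terms are negligible against $|Q_k(\pm1)|$ when $d\ge2$. This leaves only finitely many even degrees in $\Hst(X)$, while infinitely many odd ones force $|X|=2a$ (the paper's $|X|=a_X$). The only difference is the analytic input: you use Szeg\H{o}'s classical estimate where the paper uses the Haagerup--Schlichtkrull inequality, so you get a qualitative $o(1)$ instead of the paper's explicit bound on $\max\Hst(X)$, though you avoid the separate treatment of $|X|\le2$. Your aside about logarithmic growth of $Q_k(1)$ for $d=2$ is wrong but harmless, since there $Q_k(1)=1$ exactly.
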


{
Although \Cref{thm:antipodal} was already noted in \cite{BDK2001}, the authors did not provide an explicit upper bound for the harmonic index of a given $X\subset\Sd^d$.
In contrast, we explicitly determine this upper bound using the inner norm.
Additionally, we investigate a lower bound on the size of spherical designs and observe a difference from Fisher's inequality.
} 

When $d=1$, we show that infinite strength spherical designs must be cyclotomic designs, which generalize antipodal sets.
\begin{thm}\label{thm:infmain}
An $X \subset S^1$ is an infinite strength spherical design if and only if $X$ is a cyclotomic design.
\end{thm}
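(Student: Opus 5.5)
The plan is to move the problem to complex power sums, where harmonic strength becomes the zero set of a linear recurrence. Identify \(\Sd^1\) with the unit circle in \(\CC\). For \(k\ge1\), \(\Harm_k(2)\) is spanned by \(\operatorname{Re} z^k\) and \(\Im z^k\). Hence
\[
k\in\Hst(X)\iff p_k(X)\coloneqq\sum_{x\in X}x^k=0 .
\]
The sequence \((p_k)_{k\ge 0}\) is a linear recurrence sequence: it is an exponential polynomial \(\sum_{x\in X}1\cdot x^k\) with distinct roots \(x\) and all coefficients equal to \(1\). So \(\Hst(X)\) is exactly the zero set of a non-degenerate-type recurrence over \(\CC\).

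I will use the notion of cyclotomic design in the form I expect the paper to fix. \(X\) is cyclotomic if there exist integers \(m\ge1\) and \(a\ge0\) such that for every fibre \(F_w=\{x\in X\mid x^m=w\}\) of the map \(z\mapsto z^m\) we have \(\sum_{x\in F_w}x^a=0\). Writing \(F_w=\{y\zeta^i\mid i\in I_w\}\) with \(\zeta=e^{2\pi i/m}\), this says that \(\sum_{i\in I_w}\zeta^{ai}=0\) is a vanishing sum of roots of unity. If the paper's definition is phrased differently, for instance via \(0\)-\(1\) polynomials divisible by cyclotomic polynomials, I would first translate it into this fibre condition.

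The "if" direction is then a direct computation. For \(j\ge0\),
\[
p_{a+mj}=\sum_{w}\sum_{x\in F_w}x^a x^{mj}=\sum_w w^j\sum_{x\in F_w}x^a=0 .
\]
Hence \(\{a+mj\mid j\ge 0\}\cap\mathbb N\subset\Hst(X)\), and this set is infinite.

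For the "only if" direction, suppose \(\Hst(X)\) is infinite. By the Skolem--Mahler--Lech theorem, the zero set of \((p_k)\) is a finite set together with finitely many full arithmetic progressions \(\{a+mj\mid j\ge0\}\). Since the zero set is infinite, at least one such progression occurs. Group \(X\) into the fibres \(F_w\) of \(z\mapsto z^m\). Then
\[
0=p_{a+mj}=\sum_w c_w\,w^j \quad\text{for all } j\ge 0, \qquad\text{where } c_w=\sum_{x\in F_w}x^a .
\]
The values \(w\) are distinct, so the Vandermonde determinant forces every \(c_w=0\). This is precisely the cyclotomic condition.

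The main obstacle is the appeal to Skolem--Mahler--Lech, a deep \(p\)-adic theorem. I would first check whether a lighter argument suffices here, since all roots lie on the unit circle and all coefficients equal \(1\). One idea is to consider the ratios \(x/x'\). If some ratio is not a root of unity, a Kronecker/Weyl equidistribution argument along the zero set might give a contradiction; otherwise all ratios are roots of unity and the sequence \(p_k x_0^{-k}\) is periodic, which yields the progression immediately. If this elementary route becomes messy, I will simply cite Skolem--Mahler--Lech for characteristic \(0\).
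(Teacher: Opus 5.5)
Your main line is the paper's own proof. Lemma~\ref{lem:Hong} reduces to power sums, Skolem--Mahler--Lech gives a full progression $\{a+mj\mid j\ge0\}\subset\Hst(X)$ (Lemma~\ref{lem:lech}), and the Vandermonde argument runs on the fibres of $z\mapsto z^{m}$ (Lemma~\ref{lem:hst}). For the converse, your one-line fibre computation is a more direct route to the $\lambda$-periodicity that the paper extracts from polynomial sets and Lemma~\ref{lem:pol0} in Theorem~\ref{thm:pol}.

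\textbf{Missing step: the definition.} You never match your fibre condition to the paper's definition, so the proof is not complete as written. The paper defines a cyclotomic design through the $\sim_{\mu_\infty}$-classes $X_j$: each class must have at least two points, and some $t\in\{1,\dots,\lambda-1\}$ must lie in every $\Hst(X_j)$, where $\lambda$ is the lcm of the class periods. The bridge is the content of Lemma~\ref{lem:trivial} (with Theorem~\ref{thm:triC}), and it is short.
\begin{enumerate}
\item For the ``if'' direction: each class lies in a coset $\alpha_j\mu_\lambda$, and $x^\lambda=y^\lambda$ forces $x\sim_{\mu_\infty}y$. Hence the classes are exactly the fibres of $z\mapsto z^\lambda$, and your computation with $m=\lambda$, $a=t$ applies.
\item For the ``only if'' direction: since $\mu_m\subset\mu_\infty$, each $X_j$ is a union of $m$-fibres. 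A one-point fibre would have nonzero sum $x^a$, so every class has at least two points, and $a\in\Hst(X_j)$ for all $j$.
\item Reducing the index: $P_{k+\lambda}(X_j)=\alpha_j^{\lambda}P_k(X_j)$ and $P_{q\lambda}(X_j)=|X_j|\,\alpha_j^{q\lambda}\neq0$. So $a\not\equiv0\pmod{\lambda}$, and the residue of $a$ in $\{1,\dots,\lambda-1\}$ is a common index.
\end{enumerate}

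\textbf{Two side remarks.} First, calling $(p_k)$ ``non-degenerate-type'' is inaccurate. Its characteristic roots are simple, but ratios of roots can be roots of unity. When $\Hst(X)$ is infinite some ratio must be one, since a non-degenerate recurrence that is not identically zero has only finitely many zeros. Nothing in your argument uses this, so just drop the word.

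Second, the elementary alternative you sketch fails at its first branch. A ratio $x/x'$ that is not a root of unity gives no contradiction. For example, $X=\{1,-1,e^{i},-e^{i}\}$ is antipodal, so every odd $k$ lies in $\Hst(X)$, yet $e^{i}$ is not a root of unity. A correct replacement would group $X$ into $\mu_\infty$-classes and work on each residue class modulo $\lambda$. There one must show that $\sum_j P_r(X_j)\beta_j^{n}$, with $\beta_j=\alpha_j^{\lambda}$ having no root-of-unity ratios, has only finitely many zeros unless every coefficient vanishes. That is the non-degenerate case of Skolem--Mahler--Lech itself. Equidistribution arguments naturally give density-zero statements rather than finiteness, so citing the theorem, as the paper does, is the right choice.
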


The detailed definition of a cyclotomic design is provided in Section \ref{sec:dim1}.
Additionally, we establish properties of the harmonic strength of infinite strength spherical designs.

\begin{dfn}[Weak GCD property and GCD property]
Let \(T\subset\mathbb N\). 
We say that \(T\) has the weak GCD property if there exist a finite subset \(N\subset\mathbb N\), a positive integer \(\lambda\),
and a subset \(T_S\subset\{d\in\mathbb N\mid d\mid\lambda\}\) such that
\[
  T=N\cup\{j\in\mathbb N\mid \GCD(j,\lambda)\in T_S\}.
\]
If one can take \(N=\emptyset\), then we say that \(T\) has the GCD property.

When \(T\) has the weak GCD property, its period \(\lambda_p\) is defined by
\[
\lambda_p\coloneq\min\left\{
\lambda\in\mathbb N
\,\middle|\,
\begin{array}{l}
\text{there exist a finite set }N\subset\mathbb N
\text{ and }T_S\subset\{d\in\mathbb N\mid d\mid\lambda\}\\
\text{such that }
T=N\cup\{j\in\mathbb N\mid \GCD(j,\lambda)\in T_S\}
\end{array}
\right\}.
\]
\end{dfn}

\begin{thm}\label{thm:hst}
For any infinite strength spherical design $X\subset\Sd^d$, $\Hst(X)$ has the weak GCD property.
In particular, when $d>1$, the period of $\Hst(X)$ is $2$.
\end{thm}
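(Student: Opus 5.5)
Split by dimension, using \Cref{thm:antipodal} for $d\ge2$ and \Cref{thm:infmain} for $d=1$.

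\textbf{Case $d\ge 2$.} By \Cref{thm:antipodal}, $X$ is antipodal. So every odd $k$ lies in $\Hst(X)$, since for $P\in\Harm_k(d+1)$ with $k$ odd we have $P(-\xi)=-P(\xi)$. It remains to show that only finitely many even $k$ lie in $\Hst(X)$. We use the reproducing kernel: $k\in\Hst(X)$ iff $\sum_{\xi,\eta\in X}Q_k(\langle\xi,\eta\rangle)=0$, where $Q_k$ is the (normalized) Gegenbauer polynomial with $Q_k(1)>0$. Normalize by $Q_k(1)$. For $d\ge2$, $|Q_k(t)|/Q_k(1)\to 0$ as $k\to\infty$ uniformly on compact subsets of $(-1,1)$; this is the standard decay $O(k^{-(d-1)/2})$, which is where $d\ge2$ enters. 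For $k$ even we have $Q_k(-1)=Q_k(1)$, so the pairs with $\langle\xi,\eta\rangle=\pm1$ contribute $2|X|\,Q_k(1)$ in total. Let $\delta=\max\{|\langle\xi,\eta\rangle| : \langle\xi,\eta\rangle\neq\pm1\}<1$. The other pairs contribute at most $|X|^2\max_{|t|\le\delta}|Q_k(t)|=o(Q_k(1))$. Hence the sum is positive for all large even $k$. Therefore $\Hst(X)=\{j : \GCD(j,2)=1\}\cup N$ with $N$ a finite set of even numbers, which gives the weak GCD property with $\lambda=2$ and $T_S=\{1\}$. The period is not $1$: with $\lambda=1$ the set would be either finite or cofinite, but $\Hst(X)$ contains all odd numbers and misses all large even ones.

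\textbf{Case $d=1$.} Identify $X$ with a multiset of unit complex numbers $z$. Then $k\in\Hst(X)$ iff $p_k:=\sum_{z\in X} z^k=0$, because $\Harm_k(2)$ is spanned by $\operatorname{Re} z^k$ and $\operatorname{Im} z^k$. By \Cref{thm:infmain}, $X$ is cyclotomic. I expect this to mean that, after a rotation, $X=\bigcup_i \zeta_i Y_i$, where each $Y_i$ consists of roots of unity, the finitely many rotation classes $\zeta_i\mu_\infty$ are distinct, and the infinite part of $\Hst(X)$ comes from $k$ for which each class sum vanishes separately.

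The argument then runs as follows. Write $p_k=\sum_i \zeta_i^k s_i(k)$, where $s_i(k)=\sum_{w\in Y_i}w^k$. Each $s_i$ is periodic in $k$ with period $\lambda_i$, the lcm of the orders of the elements of $Y_i$. A Skolem--Mahler--Lech / Vandermonde-type argument handles the cross terms. Restrict to an arithmetic progression $k\equiv r \pmod{\lambda}$ with $\lambda=\LCM(\lambda_i)$; there $p_k=\sum_i c_{i,r}\,(\zeta_i^\lambda)^{k}$ up to fixed factors. The $\zeta_i^\lambda$ are distinct and none is a root of unity relative to another, since the classes are distinct. So this exponential polynomial either vanishes identically on the progression (all $c_{i,r}=0$) or has only finitely many zeros. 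This is the nondegenerate Skolem--Mahler--Lech statement, which is elementary here via Vandermonde on consecutive terms.

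Hence $\Hst(X)$ is a union of full residue classes mod $\lambda$ plus a finite set $N$. To obtain the GCD form, note that $s_i(k)$ depends only on $\GCD(k,\lambda)$: each $s_i(k)$ is a sum of Ramanujan-type sums when $Y_i$ is Galois-stable. Here I would invoke the definition of cyclotomic design, which should ensure exactly this Galois closure, as the terminology suggests. \textbf{The main obstacle} is this last step: showing that the vanishing residue classes are unions of classes $\{j : \GCD(j,\lambda)=e\}$. If $Y_i$ is not Galois-stable, I would instead use that $\Hst(X)$ is invariant under $k\mapsto ka$ for $a$ coprime to $\lambda$. This holds because $p_{ka}=\sigma_a(p_k)$ on the cyclotomic part, where $\sigma_a$ is the Galois automorphism, and a sum vanishes iff its Galois conjugate does. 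Residue classes mod $\lambda$ closed under multiplication by units are exactly unions of the GCD classes, which gives $T_S$.
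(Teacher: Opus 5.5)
Your $d\ge2$ case follows the paper's argument. Antipodality from \Cref{thm:antipodal} puts every odd degree in $\Hst(X)$, and the decay of $Q_k(t)/Q_k(1)$ on $|t|\le l_X<1$ excludes all large even degrees. The paper makes this quantitative with the Haagerup--Schlichtkrull bound (\Cref{thm:main}, \Cref{thm:hstGEQ2}), whereas you use the classical $O(k^{-(d-1)/2})$ estimate. Your check that the period is not $1$ supplies a point the paper only asserts.

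For $d=1$ your route is genuinely different. The paper first classifies $X$ as a cyclotomic design via \Cref{thm:infmain}. It then encodes each $\mu_\infty$-class as $X(f_j,\zeta_\lambda,\alpha_j)$ with a $0$-$1$ polynomial and reads off $\bigcap_j\Hst(X_j)=\{t\mid\GCD(t,\lambda)\in T_\lambda\}$ from $\GCD(f_1,\ldots,f_m,x^\lambda-1)$ (\Cref{lem:pol0}). Finally, in \Cref{thm:gcd}, it sandwiches $\Hst(X)$ using the progression that Skolem--Mahler--Lech supplies through \Cref{lem:hst}, and removes the discrepancy by periodicity.

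You instead work with the $\mu_\infty$-partition of an arbitrary finite $X$. On each residue class $r$ mod $\lambda$, the moments $p_{r+n\lambda}=\sum_i\zeta_i^r s_i(r)(\zeta_i^{\lambda})^n$ form an exponential polynomial in $n$. It is non-degenerate because $(\zeta_i/\zeta_j)^\lambda\notin\mu_\infty$, so either all class sums vanish at $r$ or there are only finitely many zeros. This gives $\Hst(X)=N\cup S$ with $S=\bigcap_i\Hst(X_i)$ in one stroke, and it never really uses \Cref{thm:infmain}.

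The gcd shape of $S$ then comes from two facts. First, $s_i(ak)=\sigma_a(s_i(k))$ whenever $\GCD(a,\lambda)=1$. Second, the orbits of $(\mathbb{Z}/\lambda\mathbb{Z})^\times$ on $\mathbb{Z}/\lambda\mathbb{Z}$ are exactly the gcd classes, since units mod $\lambda/e$ lift to units mod $\lambda$. This is the same arithmetic input as the paper's: irreducibility of $\Phi_n$ over $\mathbb{Q}$ is transitivity of the Galois action on primitive roots. The difference is that the paper's polynomial encoding names $T_S$ explicitly, which is what \Cref{thm:decide} needs. Your version is shorter and uniform over all finite $X$, but leaves $T_S$ implicit.

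Three corrections are needed.

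(i) Non-degenerate finiteness is not elementary via Vandermonde on consecutive terms. Vandermonde only excludes as many consecutive zeros as there are classes; ruling out infinitely many scattered zeros is exactly what Skolem--Mahler--Lech provides. The right order is as follows. Apply \Cref{thm:SkolemLech} to $n\mapsto p_{r+n\lambda}$ to obtain an infinite progression $n=a+bm$ of zeros. Then apply Vandermonde in $m$ with the bases $\zeta_i^{\lambda b}$, which are pairwise distinct. This forces every coefficient $\zeta_i^{r+\lambda a}s_i(r)$, and hence every $s_i(r)$, to vanish.

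(ii) Your first option for the gcd shape is false, because cyclotomic designs need not have Galois-stable classes. The paper's design $Z$ contains the class $\{1,i\}$, for which $s(1)=1+i\neq1-i=s(3)$ even though $\GCD(1,4)=\GCD(3,4)$. So only your fallback works.

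(iii) That fallback must be stated for the class sums $s_i$ and the periodic part $S$, not for $p_k$ or $\Hst(X)$. The rotations $\zeta_i$ need not be algebraic, so $\sigma_a$ does not act on $p_k$ as claimed. Moreover, $\Hst(X)$ itself is not closed under $k\mapsto ak$: by \Cref{thm:N} some $X$ has $\Hst(X)=\{2\}\cup(2\NN-1)$, yet $6\notin\Hst(X)$.
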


Moreover, we consider the inverse problem of \Cref{thm:hst} when $d=1$ as follows:

\begin{Pb}\label{prob:inverse}
    Given a subset \(T\subset\mathbb{N}\) which is an infinite set, does there exist a spherical design \(X\subset S^{1}\) such that \(\Hst(X)=T\)?
\end{Pb}

We immediately conclude that if \(T\subset\NN\) does not possess the weak GCD property, then there is no finite set \(X\subset S^1\) such that \(\Hst(X)=T\).
For example, there is no finite set \(X\subset S^1\) such that \(\Hst(X)=\{2^k\mid k\in\NN\}.\)
To solve \Cref{prob:inverse}, we provide a method to determine whether there exists a finite set \(X\subset S^1\) such that \(\Hst(X)=T\), even when \(T\) has the weak GCD property.

We show that the existence of an infinite strength spherical design in \(\Sd^1\) is equivalent to the existence of \(0\)-\(1\) polynomials whose joint greatest common divisor with \(x^{\lambda_p}-1\) is prescribed by cyclotomic polynomials.

\begin{thm}\label{thm:decide}
Let \(T\) be an infinite subset of \(\mathbb{N}\) which has the weak GCD
property with period \(\lambda_p\), and suppose that
\[
  T=N\cup\{t\in\NN\mid\GCD(t,\lambda_p)\in T_S\},
\]
where \(N\subset\mathbb N\) is finite and
\[
  T_S\subset\{t\in\mathbb N\mid t\mid\lambda_p\}.
\]
Then an \(X\subset\Sd^1\) with \(\Hst(X)=T\) exists if and only if there exist
a positive integer \(m\) and nonzero polynomials
\[
  f_1,\ldots,f_m\in\mathbb Q[x]
\]
such that the following three conditions hold:
\begin{itemize}
  \item[$\cdot$] \(\deg f_j<\lambda_p\) for all \(j=1,\ldots,m\).
  \item[$\cdot$] All coefficients of each \(f_j\) are \(0\) or \(1\).
  \item[$\cdot$]
  \[
    \GCD(f_1,\ldots,f_m,x^{\lambda_p}-1)
    =
    \prod_{t\in T_S}\Phi_{\lambda_p/t}.
  \]
\end{itemize}
\end{thm}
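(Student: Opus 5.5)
The plan is to translate everything into power sums. Identify $\Sd^1$ with the unit circle in $\CC$. Since $\Harm_k(2)$ is spanned by $\mathrm{Re}\,z^k$ and $\mathrm{Im}\,z^k$, we have $k\in\Hst(X)$ if and only if $p_k(X)\coloneqq\sum_{z\in X}z^k=0$. By \Cref{thm:infmain}, any $X$ with infinite $\Hst(X)$ is a cyclotomic design. I will write it as $X=\bigcup_{j=1}^m \alpha_j\zeta_\lambda^{A_j}$, where $\zeta_\lambda=e^{2\pi i/\lambda}$, $A_j\subset\{0,\dots,\lambda-1\}$, and the ratios $\alpha_i/\alpha_j$ ($i\ne j$) are not roots of unity. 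Setting $f_j=\sum_{a\in A_j}x^a$, which is a $0$-$1$ polynomial of degree $<\lambda$, we get
\[
p_k(X)=\sum_{j=1}^m \alpha_j^k f_j(\zeta_\lambda^k).
\]
For $k$ in a fixed residue class mod $\lambda$, this is an exponential polynomial in $k$ with pairwise non-degenerate bases $\alpha_j$. A Vandermonde / Skolem--Mahler--Lech type argument then shows it vanishes for infinitely many such $k$ iff all coefficients $f_j(\zeta_\lambda^k)$ vanish, and otherwise it has only finitely many zeros. Moreover, $f_j(\zeta_\lambda^k)=0$ iff $\Phi_{\lambda/\GCD(k,\lambda)}\mid f_j$. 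Hence
\[
\Hst(X)=N'\cup\{k \mid \Phi_{\lambda/\GCD(k,\lambda)} \mid \GCD(f_1,\dots,f_m,x^\lambda-1)\}
\]
with $N'$ finite. This is essentially how \Cref{thm:hst} arises for $d=1$.

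\emph{Necessity.} Given $X$ with $\Hst(X)=T$, the display above describes $T$ with some period $\lambda$. Since $\lambda_p$ is the minimal period, $\lambda_p\mid\lambda$; I expect periods of weak GCD sets to be closed under $\GCD$. The step I expect to be the main obstacle is descending from $\lambda$ to $\lambda_p$ while keeping the polynomials $0$-$1$ of degree $<\lambda_p$. My first attempt will be to refine the decomposition of $X$. Write each $\alpha_j\zeta_\lambda^{A_j}$ as a union of pieces $\alpha_j\zeta_\lambda^{r}\zeta_{\lambda_p}^{B_{j,r}}$ over $r\in\{0,\dots,\lambda/\lambda_p-1\}$, with $B_{j,r}\subset\{0,\dots,\lambda_p-1\}$, and let $g_{j,r}=\sum_{b\in B_{j,r}}x^b$. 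The bases $\alpha_j\zeta_\lambda^r$ now have root-of-unity ratios within each $j$, so the independence argument must be carried out on residue classes mod $\lambda$. The goal is to show that, because $T$ is $\lambda_p$-periodic up to a finite set, the vanishing on the infinite part forces $\Phi_{\lambda_p/t}\mid g_{j,r}$ for every $t\in T_S$ and all $j,r$. Conversely, if $\Phi_{\lambda_p/t}$ divided every $g_{j,r}$ for some $t\notin T_S$, then the whole residue class $\GCD(k,\lambda_p)=t$ would lie in $\Hst(X)$, contradicting the definition of $T$. Taking the nonzero $g_{j,r}$ as $f_1,\dots,f_m$ gives the required $\GCD$.

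\emph{Sufficiency.} Given $f_1,\dots,f_m$, I take $\alpha_1,\dots,\alpha_m$ algebraically generic, with no root-of-unity ratios and $|\alpha_j|=1$, and form $Y=\bigcup_j\alpha_j\zeta_{\lambda_p}^{A_j}$. By the computation above, $\Hst(Y)$ agrees with $\{k\mid\GCD(k,\lambda_p)\in T_S\}$ except on a finite set. Genericity should make this exceptional set empty: outside the periodic part some $f_j(\zeta^k)\neq0$, and a generic choice avoids the countably many algebraic relations $p_k=0$.

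It remains to realize the finite part $N$ exactly. I plan to take a finite design $Z$ from \cite{MN25} with $\Hst(Z)=N$, scaled by a generic rotation. I then replace it by $Z'=\bigcup_{a\in A_1}\zeta_{\lambda_p}^a Z$, so that $p_k(Z')=f_1(\zeta_{\lambda_p}^k)p_k(Z)$ vanishes on the periodic part, and set $X=Y\cup Z'$ (disjoint by genericity). Then $p_k(X)=p_k(Y)+p_k(Z')$. Genericity of the rotation of $Y$ relative to $Z'$ should ensure this vanishes exactly on $T$. Two points need checking here. First, $k\in N$ outside the periodic part must still give $0$; this requires $p_k(Y)=0$ there too, so I may instead need to put the $\alpha_j$-components inside the construction of \cite{MN25}. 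Second, we must have $p_k(Z')\neq0$ for $k\notin T$. If this fails, I would fall back on adjusting the construction of \cite{MN25} directly, adding components $\alpha\zeta_{\lambda_p}^{A_j}$ to its building blocks.
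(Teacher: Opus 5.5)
The genuine gap is in the sufficiency direction, in realizing the finite part $N$. The rest of the proposal holds up in outline.

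\textbf{What is sound.} Your necessity direction is the paper's argument: the pieces $\alpha_j\zeta_\lambda^{r}\zeta_{\lambda_p}^{B_{j,r}}$ are exactly the classes of $X/\!\sim_{\mu_{\lambda_p}}$. You can drop the detour through $\lambda$, and with it the unproved claim $\lambda_p\mid\lambda$, as the paper does in \Cref{lem:hst}. For every $t$ with $\GCD(t,\lambda_p)\in T_S$, the whole progression $\{t+s\lambda_p\mid s\ge 0\}$ lies in $\Hst(X)$. Distinct $\mu_{\lambda_p}$-classes have distinct $\lambda_p$-th powers, so a plain Vandermonde argument forces $g_{j,r}(\zeta_{\lambda_p}^t)=0$ with no non-degeneracy needed. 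Your sufficiency argument in the case $N\subset\{t\mid\GCD(t,\lambda_p)\in T_S\}$ is also fine. Choosing the $\alpha_j$ off a countable union of null sets of the torus replaces the paper's explicit choice $\alpha_j=e^{ji}$ together with Lindemann--Weierstrass (\Cref{thm:W}).

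\textbf{The gap: realizing $N$.} The construction $X=Y\cup Z'$ fails as soon as some $k\in N$ has $\GCD(k,\lambda_p)\notin T_S$, which is the only case where anything has to be done. For such $k$ you get $p_k(Z')=f_1(\zeta_{\lambda_p}^k)p_k(Z)=0$. But $p_k(Y)\neq 0$, precisely because of the genericity you imposed on $Y$. Hence $p_k(X)=p_k(Y)\neq0$ and $k\notin\Hst(X)$. Power sums are additive over disjoint unions, so any union of this kind needs $p_k(Y)=0$ at the extra indices already, which is the original problem. Your fallbacks are not specified.

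\textbf{The missing idea: combine multiplicatively.} If $|Y\cdot Z|=|Y||Z|$, then $p_k(Y\cdot Z)=p_k(Y)p_k(Z)$, hence $\Hst(Y\cdot Z)=\Hst(Y)\cup\Hst(Z)$ (\Cref{lem:prod}). The paper (\Cref{thm:N}) starts from $Y$ with $\Hst(Y)=\{t\mid\GCD(t,\lambda_p)\in T_S\}$. It multiplies by the five-point sets $X^{1/t}(x)$ of \Cref{lem:singleton2}, which satisfy $\Hst(X^{1/t}(x))=\{t\}$, adding one $t\in N$ at a time. At each step $x$ is chosen outside a finite bad set, so that $X^{1/t}(x)X^{1/t}(x)^{-1}$ and $YY^{-1}$ meet only in $1$.

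\textbf{Why non-collision matters.} That condition is essential, and rotations cannot supply it. A ratio in $\mu_{\lambda_p}\setminus\{1\}$ occurring both inside $Z$ and inside some $\zeta_{\lambda_p}^{A_j}$ causes a collision however $Z$ and the $\alpha_j$ are rotated. The continuous parameter $x$ from \cite{MN25} is what lets the paper avoid it. The same issue already affects your identity $p_k(Z')=f_1(\zeta_{\lambda_p}^k)p_k(Z)$, which requires $|Z'|=|A_1||Z|$.
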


Note that the number of nonzero \(0\)-\(1\) polynomials of degree strictly less than \(\lambda_p\) is exactly \(2^{\lambda_p}-1\). Therefore, \Cref{thm:decide} implies that the existence of an \(X\) such that \(\Hst(X)=T\) can be decided by checking finitely many subfamilies of this finite set of polynomials.

For example, using \Cref{thm:decide}, we deduce that there does not exist an $X$ such that 
\[
  \Hst(X)=N\cup\{j\in\NN\mid \GCD(j,6)\in\{2,3\}\}
\]
for any finite subset $N$, which is discussed in Proposition \ref{prop:six} in Section \ref{sec:inverse}.
To resolve these problems, we use tools from the theory of linear recurrence sequences and transcendental number theory, namely the Skolem--Mahler--Lech theorem~\cite{Lech,Skolem} and the Lindemann--Weierstrass theorem~\cite{B1975}. The latter is used in its linear-independence form: exponentials of distinct algebraic numbers are linearly independent over \(\overline{\mathbb Q}\).

This paper is organized as follows.
In \Cref{sec:inf}, we discuss infinite strength spherical designs in $\Sd^d$ for $d\geq 2$.
In particular, we provide the proof of \Cref{thm:antipodal} in \Cref{ssec:proveinf} and discuss a lower bound on the number of points in \Cref{ssec:appinf}.
In \Cref{sec:dim1}, we discuss infinite strength spherical designs in $\Sd^1$.
We first introduce the definition of cyclotomic designs in \Cref{ssec:def}, and discuss the harmonic strength of cyclotomic designs in \Cref{ssec:hst}.
After that, we prove \Cref{thm:infmain} 
and \Cref{thm:hst} in \Cref{ssec:inf}. 
In \Cref{sec:inverse}, we prove \Cref{thm:decide}, which provides one of the answers to \Cref{prob:inverse}.
To achieve this, we establish some preliminary theorems in \Cref{ssec:inverseEq} and \Cref{ssec:inverseGCD}, and complete the proof of \Cref{thm:decide} in \Cref{ssec:inversepro}.
Furthermore, we solve certain existence problems using \Cref{thm:decide} in \Cref{ssec:inversepro}.

\section{Infinite strength spherical design on $\Sd^d$, where $d\geq2$}\label{sec:inf}  

In this section, we study infinite strength spherical designs when $d\geq2$; that is, finite subsets $X$ of $\Sd^d$ with an infinite $\Hst(X)$.
In particular, we prove that infinite strength spherical designs are always antipodal sets when $d\geq2$.
First, we define antipodal sets.

\begin{dfn}\label{df:antipodal}
  A finite subset $X\subset\Sd^d$ is called antipodal if $X=-X$ holds; that is, for any $\xi\in X$, $-\xi\in X$ holds.
\end{dfn}

This section is organized as follows.
In \Cref{ssec:preinf}, we define some notation and review previous results regarding Jacobi polynomials, which are closely related to spherical designs.
In \Cref{ssec:proveinf}, we provide the proof of \Cref{thm:antipodal}.
In \Cref{ssec:appinf}, we discuss a lower bound on the size of $X\subset\Sd^d$ with $t\in \Hst(X)$, and compare it with the Fisher-type inequality.


\subsection{Preliminaries for Section \ref{sec:inf}}\label{ssec:preinf}

Let $\mathbb{S}^d$ be the $d$-dimensional sphere, and let $\langle \cdot, \cdot \rangle$ be the standard inner product.
In this section, we assume that $d\geq2$.

It is known that spherical $T$-designs can be defined using Gegenbauer polynomials, which are a specific class of Jacobi polynomials.
Hereafter, we denote the Jacobi polynomial by $P_k^{(\alpha, \beta)}(x)$.

\begin{dfn}[Gegenbauer polynomial]
  The Gegenbauer polynomial of degree $k$, denoted by $Q_k^{(d)}$, is defined as the specific Jacobi polynomial:
\[Q_k^{(d)}(x) = P_k^{(\frac{d-2}{2}, \frac{d-2}{2})}(x).\]
\end{dfn}

\begin{thm}[Lemma 2.1 in \cite{BOT2015}]\label{thm:spd}
Let $T \subset \mathbb{N}$.
A finite set $X \subset \mathbb{S}^d$ is a spherical $T$-design if and only if $\sum_{x,y \in X} Q_k^{(d)}(\langle x, y \rangle) = 0$ holds for all $k \in T$.
\end{thm}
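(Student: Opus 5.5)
The plan is to prove the statement via the addition formula for spherical harmonics, which turns the double sum into a sum of squares. Fix \(k\in\NN\) and equip \(\Harm_k(d+1)\), restricted to \(\Sd^d\), with the inner product \(\la f,g\ra=\frac{1}{|\Sd^d|}\int_{\Sd^d}f(\xi)g(\xi)\,d\xi\). Choose an orthonormal basis \(e_{k,1},\dots,e_{k,h_k}\), where \(h_k=\dim\Harm_k(d+1)\).

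The first step is to show that the reproducing kernel \(K_k(x,y)=\sum_{i=1}^{h_k}e_{k,i}(x)e_{k,i}(y)\) is zonal. The argument goes as follows.
\begin{enumerate}
\item \(K_k\) does not depend on the choice of orthonormal basis.
\item The orthogonal group \(O(d+1)\) acts isometrically on \(\Harm_k(d+1)\), so \(K_k(gx,gy)=K_k(x,y)\) for every \(g\in O(d+1)\).
\item Hence \(K_k(x,y)\) depends only on \(\la x,y\ra\): we can write \(K_k(x,y)=F_k(\la x,y\ra)\).
\item Fix \(y\). The function \(x\mapsto F_k(\la x,y\ra)\) is a harmonic polynomial of degree \(k\) invariant under the stabilizer of \(y\).
\item The standard uniqueness of zonal harmonics then forces \(F_k\) to be a multiple of the Gegenbauer polynomial with parameter \((d-1)/2\).
\item That polynomial is a positive multiple of the Jacobi polynomial \(P_k^{(\frac{d-2}{2},\frac{d-2}{2})}=Q_k^{(d)}\).
\end{enumerate}
The conclusion is \(K_k(x,y)=c_k\,Q_k^{(d)}(\la x,y\ra)\) for some constant \(c_k\).

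The second step is to pin down the sign of \(c_k\). Setting \(x=y\) gives \(K_k(x,x)=\sum_i e_{k,i}(x)^2\). Averaging over the sphere yields \(h_k>0\), and \(K_k(x,x)\) is constant by invariance, so \(K_k(x,x)=h_k\). Since \(Q_k^{(d)}(1)=\binom{k+\frac{d-2}{2}}{k}>0\), it follows that \(c_k=h_k/Q_k^{(d)}(1)>0\).

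The third step is the main computation. Using the zonal form of the kernel,
\[
\sum_{x,y\in X}Q_k^{(d)}(\la x,y\ra)=\frac{1}{c_k}\sum_{i=1}^{h_k}\Bigl(\sum_{x\in X}e_{k,i}(x)\Bigr)^2 .
\]
Because \(c_k>0\), the left side vanishes if and only if \(\sum_{x\in X}e_{k,i}(x)=0\) for every \(i\). Since the \(e_{k,i}\) span \(\Harm_k(d+1)\), this holds if and only if \(\sum_{x\in X}P(x)=0\) for all \(P\in\Harm_k(d+1)\), that is, if and only if \(k\in\Hst(X)\). Applying this equivalence for each \(k\in T\) gives the statement of \Cref{thm:spd}.

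The step I expect to be the main obstacle is the identification of the zonal kernel with \(Q_k^{(d)}\) together with the positivity of \(c_k\); everything else is formal. To get through it, I would cite the classical theory of zonal spherical harmonics, namely that the \(O(d)\)-invariant subspace of \(\Harm_k(d+1)\) is one-dimensional and is spanned by the Gegenbauer polynomial. The positivity then follows by evaluating at \(x=y\), as in the second step.
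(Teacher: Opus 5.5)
Your proposal is correct and is the standard addition-formula argument behind the cited Lemma~2.1 of \cite{BOT2015}; the paper gives no proof of its own, only that citation. The one point specific to this paper is that \(Q_k^{(d)}\) carries the Jacobi normalization rather than the addition-formula normalization, and your check \(c_k=h_k/Q_k^{(d)}(1)>0\) handles exactly that. For \(d=1\) (the case the paper later uses to restate Hong's lemma), the Gegenbauer parameter \((d-1)/2=0\) is degenerate. There you should state step 5 directly with \(P_k^{(-1/2,-1/2)}\), a positive multiple of the Chebyshev polynomial \(T_k\); nothing else in your argument changes.
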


An inequality for Jacobi polynomials was obtained by Haagerup and Schlichtkrull~\cite{HS2014}.
We define the gamma function and $\hat{P}_k^{(\alpha, \beta)}(x)$ as follows:
\begin{align*}
 \Gamma(z) &= \int_{0}^{\infty} t^{z-1} e^{-t} \, dt \quad (\mathrm{Re}(z) > 0), \\
 \hat{P}_k^{(\alpha, \beta)}(x) &= \left( \frac{(2k+\alpha+\beta+1)\Gamma(k+1)\Gamma(k+\alpha+\beta+1)}{2^{\alpha+\beta+1}\Gamma(k+\alpha+1)\Gamma(k+\beta+1)} \right)^{\frac{1}{2}} P_k^{(\alpha, \beta)}(x).
\end{align*}

\begin{thm}[\cite{HS2014}]\label{thm:Jine}
    If $\alpha\geq0$ and $\beta\geq0$, then there exists a constant $0 < C < 12$ such that
\[
(1-x^2)^{\frac{1}{4}}\sqrt{(1-x)^\alpha(1+x)^\beta}|\hat{P}_k^{(\alpha, \beta)}(\langle \xi_1, \xi_2 \rangle)|
\leq \frac{C}{\sqrt{2}}\left(2k+\alpha+\beta+1\right)^{\frac{1}{4}}.
\]
\end{thm}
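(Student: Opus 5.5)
This result is quoted from \cite{HS2014}, and the paper uses it as a black box. If I had to prove it, I would follow the Sturm--Sonine strategy. I have not checked the details; in particular the constants below are not verified.

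\textbf{Reduction to a Schrödinger-type equation.} Put $x=\cos\theta$ with $\theta\in[0,\pi]$. Write $s=\sin(\theta/2)$ and $c=\cos(\theta/2)$, and set
\[
u(\theta)=s^{\alpha+\frac12}c^{\beta+\frac12}\,\hat P_k^{(\alpha,\beta)}(\cos\theta).
\]
Since $(1-x^2)^{1/4}=(2sc)^{1/2}$ and $\sqrt{(1-x)^\alpha(1+x)^\beta}=2^{(\alpha+\beta)/2}s^\alpha c^\beta$, the left-hand side of \Cref{thm:Jine} is an absolute constant times $|u(\theta)|$. The orthonormalization built into $\hat P_k^{(\alpha,\beta)}$ gives $\int_0^\pi u(\theta)^2\,d\theta=\tfrac12$, up to a fixed power of $2$. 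The Jacobi differential equation turns into
\[
u''+q(\theta)u=0,
\qquad
q(\theta)=\frac{N^2}{4}+\frac{1/4-\alpha^2}{4\sin^2(\theta/2)}+\frac{1/4-\beta^2}{4\cos^2(\theta/2)},
\]
where $N=2k+\alpha+\beta+1$. It therefore suffices to show $\sup_\theta|u(\theta)|\le C' N^{1/4}$ with $C'$ independent of $k$, $\alpha$ and $\beta$.

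\textbf{Sonine function.} On the set where $q>0$ (the oscillatory region), consider $S=u^2+u'^2/q$. Then $S'=-q'u'^2/q^2$, so $S$ is monotone wherever $q$ is monotone. The function $q$ is unimodal: it decreases and then increases if $\alpha,\beta\le\frac12$, and it increases and then decreases otherwise. So on each side of the critical point of $q$, $S$ is monotone. Moreover $u^2\le S$, so $|u|$ is controlled by the largest value of $S$. Outside the oscillatory region, where $q\le0$, $u$ is convex away from zero ($uu''\ge0$), so $|u|$ there is bounded by its value at the turning point. The problem is thus reduced to bounding $S$ in the oscillatory region.

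\textbf{Bounding $S$ via the $L^2$ normalization.} This is where the bound $N^{1/4}$ should come from, and I expect it to be the main obstacle. Over a length $\ell$ on which $S$ is roughly constant and $u$ completes at least one oscillation, the averaging identity $u^2=S\cos^2(\cdot)$ gives $\int u^2\gtrsim S\ell$. Combined with $\int u^2\le\frac12$, this yields $S\lesssim 1/\ell$. The plan is to show that, starting from any point $\theta_0$ in the oscillatory region, one can move in the direction in which $S$ does not decrease for a distance $\ell\gtrsim N^{-1/2}$. Over that distance $u$ has enough oscillation, because the local frequency is $\sqrt q$ and the relevant scale is where $\sqrt q\cdot\ell\gtrsim1$. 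That gives $S(\theta_0)\lesssim N^{1/2}$, hence $|u|\lesssim N^{1/4}$.

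The delicate part is doing this uniformly in $\alpha,\beta\ge0$, including the case where $\alpha$ or $\beta$ is comparable to $N$. In that case the turning points move into the interior, and near them $q$ is small and the oscillation argument degenerates. I would first try to handle a window around each turning point by an Airy-type comparison: there the natural length scale is $(q')^{-1/3}$, and the resulting bound should be of order $N^{1/6}$, which is better than needed. Outside that window I would use the Sonine argument with $\ell$ chosen adaptively as $\min(N^{-1/2},\,\text{distance to the turning point})$. Keeping the constants explicit throughout should then give some absolute constant, and careful bookkeeping would be needed to reach the stated $C<12$.
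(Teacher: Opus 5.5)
The paper gives no proof of \Cref{thm:Jine}; it is quoted from \cite{HS2014}. Your outline therefore has to stand on its own, and it has a genuine gap. The step you flag as the main obstacle is the whole content of the theorem, and the mechanism you propose for it fails in the regime that forces the exponent $\frac14$.

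For $\alpha,\beta>\frac12$ the function $q$ has an interior maximum and is negative near $0$ and $\pi$. So $S=u^2+u'^2/q$ is nondecreasing as you move toward either turning point, and it blows up there whenever $u'\neq0$. The averaging argument at a point $\theta_0$ therefore needs an interval between $\theta_0$ and the nearer turning point on which $\sqrt q\,\ell\gtrsim1$. Near the turning points you are left with the Airy heuristic, and that heuristic is false in general.

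Take $k=0$ and $\alpha=\beta\to\infty$. The left-hand side is $(1-x^2)^{\alpha/2+1/4}h_0^{-1/2}$ with $h_0=\int_{-1}^1(1-x^2)^\alpha\,dx$. At $x=0$ it equals $h_0^{-1/2}\sim(\alpha/\pi)^{1/4}\asymp N^{1/4}$; this is the example showing that $\frac14$ cannot be lowered. In the $\theta$-variable:
\begin{itemize}
\item the turning points are $|\cos\theta|=(\alpha+\frac12)^{-1/2}$, a distance $\asymp N^{-1/2}$ apart, which equals the Airy scale $|q'|^{-1/3}$ there;
\item $q\le q(\pi/2)=\alpha+\frac12$ throughout;
\item the solution has no zeros.
\end{itemize}
So there is no Airy window on which an $N^{1/6}$ bound could hold, and no oscillation over which to average $S$. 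The extremal case is a harmonic-oscillator (Hermite-type) regime that neither half of your decomposition covers. The same issue persists whenever $k$ is small compared with $\alpha+\beta$.

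Two further remarks.

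First, the factors relating the left-hand side to $|u|$ and to $\int_0^\pi u^2\,d\theta$ are $2^{(\alpha+\beta+1)/2}$ and $2^{-(\alpha+\beta+1)}$, not absolute constants. They cancel. It is cleaner to work with $v=2^{(\alpha+\beta+1)/2}u$, which is the left-hand side itself and satisfies $\int_0^\pi v^2\,d\theta=1$.

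Second, the outline lacks an ingredient that converts the $L^2$ normalization into a sup bound without using oscillation. An energy estimate is one example. For $\alpha,\beta>0$ one has $\sup v^2\le\|v\|_2\|v'\|_2$, and
\[
\|v'\|_2^2=\int_0^\pi qv^2\,d\theta=\frac N4\Bigl(2k+1+\frac1{4\alpha}+\frac1{4\beta}\Bigr),
\]
using $\int_0^\pi v^2\sin^{-2}(\theta/2)\,d\theta=N/\alpha$ and its analogue for $\beta$. This gives $N^{1/4}$ for bounded $k$ (with $\alpha,\beta$ bounded below), but only about $(Nk)^{1/4}$ for large $k$. What your outline does not yet supply is the gluing of a bound of this type near the turning points to a Sonine-type bound in the oscillatory bulk, uniformly in $(k,\alpha,\beta)$.
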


The main result in the paper~\cite{HS2014} is an inequality concerning weighted Jacobi polynomials, but it is also explicitly noted therein that the inequality for the normalized Jacobi polynomials $\hat{P}_k$ follows naturally as a direct consequence.
Note that due to the condition on $\alpha$ and $\beta$, \Cref{thm:Jine} can be applied to Gegenbauer polynomials only when $d\geq2$.

\subsection{Proof of Theorem \ref{thm:antipodal}}\label{ssec:proveinf}

In this subsection, we provide a proof of Theorem \ref{thm:antipodal}.
First, we present a lemma and a theorem necessary to prove Theorem \ref{thm:antipodal}.
\begin{lem}\label{lem:Gegen_ineq}
Let $d\geq2$, let $X$ be a finite subset of $\mathbb{S}^d$, and suppose that 
\textcolor{black}{there exists at least one pair $(x,y)\in X^2$ such that $|\la x, y\ra|\neq1$.}
Define 
\[
  l_X \coloneq \max \{ |\langle x, y \rangle|
: x, y \in X, |\langle x, y \rangle|\neq1 \}.
\]
For $\xi_1, \xi_2 \in X$, if $|\langle \xi_1, \xi_2 \rangle|
\neq 1$, then
\[
|\hat{P}_k^{(\frac{d-2}{2}, \frac{d-2}{2})}(\langle \xi_1, \xi_2 \rangle)| < 6\sqrt{2}\left(2k+d-1\right)^{\frac{1}{4}}\left(\frac{1}{1-l_X^2}\right)^{\frac{d-1}{4}}.
\]
\end{lem}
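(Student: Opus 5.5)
The plan is to apply \Cref{thm:Jine} with $\alpha=\beta=\frac{d-2}{2}$, which is admissible because $d\ge2$ gives $\alpha=\beta\ge0$. Put $x=\langle\xi_1,\xi_2\rangle$. Then $\alpha+\beta+1=d-1$, and the weight factor on the left-hand side becomes
\[
(1-x^2)^{\frac14}\sqrt{(1-x)^{\frac{d-2}{2}}(1+x)^{\frac{d-2}{2}}}=(1-x^2)^{\frac14}(1-x^2)^{\frac{d-2}{4}}=(1-x^2)^{\frac{d-1}{4}}.
\]
\Cref{thm:Jine} therefore reads
\[
(1-x^2)^{\frac{d-1}{4}}\,\bigl|\hat{P}_k^{(\frac{d-2}{2},\frac{d-2}{2})}(x)\bigr|\le \frac{C}{\sqrt2}(2k+d-1)^{\frac14}
\]
for some constant $0<C<12$.

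Since $|x|\neq1$, we have $1-x^2>0$, so we may divide by $(1-x^2)^{\frac{d-1}{4}}$. This gives
\[
\bigl|\hat{P}_k(x)\bigr|\le \frac{C}{\sqrt2}(2k+d-1)^{\frac14}\Bigl(\frac{1}{1-x^2}\Bigr)^{\frac{d-1}{4}}.
\]
By the definition of $l_X$, the hypothesis $|x|\ne1$ implies $|x|\le l_X$. Moreover $l_X<1$, because the maximum is taken over a nonempty finite set of values strictly below $1$; the set is nonempty by the assumption that some pair $(x,y)\in X^2$ has $|\langle x,y\rangle|\ne1$. Hence $1-x^2\ge 1-l_X^2>0$. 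The exponent $\frac{d-1}{4}$ is positive, so $\bigl(\frac{1}{1-x^2}\bigr)^{\frac{d-1}{4}}\le\bigl(\frac{1}{1-l_X^2}\bigr)^{\frac{d-1}{4}}$.

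Finally, $C<12$ gives $\frac{C}{\sqrt2}<\frac{12}{\sqrt2}=6\sqrt2$. The other two factors are strictly positive, so the strict inequality passes through and we obtain the claimed bound. The only step that needs care is the bookkeeping of the weight exponents, together with confirming that $l_X<1$ is well defined, which is exactly why the lemma assumes a pair with $|\langle x,y\rangle|\ne1$.
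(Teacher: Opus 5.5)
Your proposal is correct and follows essentially the same route as the paper. Both apply \Cref{thm:Jine} with $\alpha=\beta=\frac{d-2}{2}$ to get $(1-x^2)^{\frac{d-1}{4}}|\hat{P}_k(x)|$ bounded by a multiple of $(2k+d-1)^{\frac14}$, divide by the positive weight, and use $|x|\le l_X<1$. Your explicit derivation of strictness from $C<12$ is slightly cleaner than the paper's direct substitution of $C=12$.
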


\begin{proof}
Substituting $\alpha=\beta=\frac{d-2}{2}$ and $C=12$ into the inequality of Theorem \ref{thm:Jine}, we obtain the following inequality:
\begin{equation*}
|(1-x^2)^{\frac{d-1}{4}}|
|\hat{P}_k^{(\frac{d-2}{2},\frac{d-2}{2})}(x)| < 6\sqrt{2}\left(2k+d-1\right)^{\frac{1}{4}}.
\end{equation*}
Since $|\la\xi_1,\xi_2\ra|\leq l_X<1$, we have
\begin{align*}
|\hat{P}_k^{(\frac{d-2}{2},\frac{d-2}{2})}(\langle \xi_1, \xi_2 \rangle)|
&< 6\sqrt{2}\left(2k+d-1\right)^{\frac{1}{4}}\left(\frac{1}{1-(\langle \xi_1, \xi_2 \rangle)^2}\right)^{\frac{d-1}{4}} \\
&\le 6\sqrt{2}\left(2k+d-1\right)^{\frac{1}{4}}\left(\frac{1}{1-l_X^2}\right)^{\frac{d-1}{4}}.
\end{align*}
Note that since $|\langle \xi_1, \xi_2 \rangle| \neq 1$, $1-l_X^2 \neq 0$.
\end{proof}

\begin{rem}
\textcolor{black}{
Note that we can apply \Cref{lem:Gegen_ineq} for almost all $X\subset\Sd^d$.
In fact, when $|X|\geq3$ or when $|X|=2$ and $X$ is not antipodal, we can use \Cref{lem:Gegen_ineq}.
This is because the case when $|\langle x,y\rangle|=1$ for all $(x,y)\in X^2$ is only two cases: when $|X|=1$ or when $|X|=2$ and  $X$ is an antipodal set.
}
\end{rem}

Next, we show that a sufficiently large $k$ cannot belong to the harmonic strength of $X\subset\Sd^d$ when $d \ge 2$.
\begin{thm}\label{thm:main}
Let $X$ be a finite subset of $\mathbb{S}^d$, where $d \ge 2$, and \textcolor{black}{assume that there exists at least one pair $(x,y)\in X^2$ such that $|\la x, y\ra|\neq1$.}
Define the following constants for $X$:
\begin{align*}
a_X &\coloneq \#\{(\xi_1,\xi_2) \in X^2 \mid \langle \xi_1, \xi_2 \rangle = -1\}, \\
A_X &\coloneq  |X|^2 - (|X| + a_X),\\
l_X &\coloneq \max \{ |\langle x, y \rangle|
: x, y \in X, |\langle x, y \rangle|\neq1 \}, \\
\end{align*}

If $\Hst(X)\cap\{2k\mid k\in\NN\}$ is nonempty, then 
\[
\max(\Hst(X)\cap\{2k\mid k\in\NN\})<\left( \left(\frac{3A_X}{|X|+a_X}\right)^4\cdot\left(1-l_X^2\right)^{-(d-1)}\cdot\left(2^{2d+3}\Gamma\left(\frac{d}{2}\right)^4\right)\right)^{\frac{1}{2d-3}}.
\] 
Additionally, if $|X| - a_X > 0$ and $\Hst(X)\cap\{2k-1\mid k\in\NN\}$ is nonempty, then
\[
  \max(\Hst(X)\cap\{2k-1\mid k\in\NN\})<\left( \left(\frac{3A_X}{|X|-a_X}\right)^4\cdot\left(1-l_X^2\right)^{-(d-1)}\cdot\left(2^{2d+3}\Gamma\left(\frac{d}{2}\right)^4\right)\right)^{\frac{1}{2d-3}}.
\] 
\end{thm}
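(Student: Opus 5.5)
Suppose $k\in\Hst(X)$. By \Cref{thm:spd}, $\sum_{x,y\in X}Q_k^{(d)}(\langle x,y\rangle)=0$. Since $Q_k^{(d)}$ and $\hat P_k^{(\alpha,\alpha)}$ with $\alpha=\frac{d-2}{2}$ differ only by a positive factor $c_k$, the same identity holds for $\hat P_k$. We split the double sum according to the value of the inner product:
\begin{itemize}
\item[$\cdot$] the $|X|$ diagonal pairs with $\langle x,y\rangle=1$;
\item[$\cdot$] the $a_X$ pairs with $\langle x,y\rangle=-1$;
\item[$\cdot$] the remaining $A_X$ pairs with $|\langle x,y\rangle|\le l_X<1$.
\end{itemize}
Using $P_k^{(\alpha,\alpha)}(-1)=(-1)^kP_k^{(\alpha,\alpha)}(1)$, the first two groups contribute $(|X|+(-1)^k a_X)\,\hat P_k(1)$. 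Hence
\[
(|X|+(-1)^ka_X)\,\hat P_k(1)\le\sum_{|\langle x,y\rangle|\ne1}|\hat P_k(\langle x,y\rangle)|<A_X\cdot 6\sqrt2\,(2k+d-1)^{1/4}(1-l_X^2)^{-\frac{d-1}{4}},
\]
where the last step is \Cref{lem:Gegen_ineq}. For even $k$ the coefficient is $|X|+a_X$; for odd $k$ it is $|X|-a_X$, which is why that case needs the hypothesis $|X|-a_X>0$.

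Next we bound $\hat P_k(1)$ from below. We have $P_k^{(\alpha,\alpha)}(1)=\binom{k+\alpha}{k}=\frac{\Gamma(k+\alpha+1)}{\Gamma(k+1)\Gamma(\alpha+1)}$. Substituting into the definition of $\hat P_k$ gives
\[
\hat P_k(1)^2=\frac{(2k+2\alpha+1)\,\Gamma(k+2\alpha+1)\,\Gamma(k+\alpha+1)^2}{2^{2\alpha+1}\,\Gamma(k+1)\,\Gamma(\alpha+1)^2\,\Gamma(k+\alpha+1)^2}
=\frac{(2k+d-1)\,\Gamma(k+d-1)}{2^{d-1}\,\Gamma(k+1)\,\Gamma(d/2)^2}.
\]
Since $d\ge2$, the ratio satisfies $\Gamma(k+d-1)/\Gamma(k+1)\ge k^{d-2}$. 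This holds with equality when $d=2$, and for $d\geq3$ it follows by comparing factors, using log-convexity of $\Gamma$ for fractional shifts. Therefore
\[
\hat P_k(1)\ge (2k+d-1)^{1/2}k^{\frac{d-2}{2}}\,2^{-\frac{d-1}{2}}\,\Gamma(d/2)^{-1}.
\]

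Combining the two bounds and raising to the fourth power gives
\[
(2k+d-1)\,k^{2d-4}<\Bigl(\tfrac{A_X}{|X|\pm a_X}\Bigr)^4\cdot 6^4\cdot 4\,(2k+d-1)\,(1-l_X^2)^{-(d-1)}\,2^{2d-2}\,\Gamma(d/2)^4 .
\]
After cancelling $(2k+d-1)$ we get a bound on $k^{2d-4}$. To reach the stated exponent $\frac1{2d-3}$, I would not cancel $(2k+d-1)$ fully. Instead I would keep one factor via $(2k+d-1)^{1/4}$ versus $(2k+d-1)^{1/2}$, giving $(2k+d-1)^{1/4}\ge (2k)^{1/4}$. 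The resulting inequality then reads $k^{2d-4}\cdot k\lesssim\cdots$, i.e. $k^{2d-3}<(\cdot)$, with $3^4\cdot 2^{\text{something}}$ collecting into $3^4\cdot 2^{2d+3}$.

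The main obstacle is this constant bookkeeping: matching exactly $2^{2d+3}$ and the exponent $2d-3$ requires choosing the right elementary lower bounds, for the Gamma ratio and for $2k+d-1\ge 2k$ or $\ge k$. I would first try the lower bound $\hat P_k(1)^2\ge \frac{2k\cdot k^{d-2}}{2^{d-1}\Gamma(d/2)^2}$ and the upper bound $(2k+d-1)^{1/4}$, adjusting which side absorbs the $(2k+d-1)$ factor until the stated constant appears. Any slack only strengthens the inequality, so a slightly different constant would still prove the claim in the stated form whenever my constant is at most the paper's. Finally, since the bound holds for every even, respectively odd, $k\in\Hst(X)$, it holds for the maximum; in particular that set is finite.
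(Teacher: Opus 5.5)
Your argument is the paper's own: split the pair sum by inner product, bound the $A_X$ terms with $|\langle x,y\rangle|\neq1$ by Lemma~\ref{lem:Gegen_ineq}, use the exact value of $\hat P_k(\pm1)$ together with $\Gamma(k+d-1)/\Gamma(k+1)=\prod_{j=1}^{d-2}(k+j)\ge k^{d-2}$ (no log-convexity is needed, since $d$ is an integer), and finish with $2k+d-1>2k$. The one slip is your displayed fourth-power inequality, whose left side should be $(2k+d-1)^2k^{2d-4}$ rather than $(2k+d-1)k^{2d-4}$. Cancelling one factor $2k+d-1$ and bounding the other below by $2k$ gives $k^{2d-3}<\tfrac12\cdot 6^4\cdot 4\cdot 2^{2d-2}\,\bigl(\tfrac{A_X}{|X|\pm a_X}\bigr)^4(1-l_X^2)^{-(d-1)}\Gamma(\tfrac d2)^4$. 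Since $\tfrac12\cdot6^4\cdot4\cdot2^{2d-2}=3^4\cdot2^{2d+3}$, the stated constant comes out exactly with no further adjustment.
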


\begin{proof}
In this proof, we denote $\hat{P}_k^{(\frac{d-2}{2}, \frac{d-2}{2})}(x)$ by $\widehat{Q}_k^{(d)}(x)$.
First, we provide the explicit value of $\widehat{Q}_k^{(d)}(\pm1)$.
By a straightforward calculation, 
\begin{equation}
  \left|\widehat{Q}_k^{(d)}(1)\right|=\left|\widehat{Q}_k^{(d)}(-1)\right|=\left( \frac{(2k + d - 1)\Gamma(k + d - 1)}{2^{d-1} \Gamma(\frac{d}{2})^2 \Gamma(k + 1)} \right)^{\frac{1}{2}}=C_df_d(k)^{\frac{1}{2}},\label{eq:edge}
\end{equation}
where 
\begin{align*}
  C_d &= \frac{1}{2^{\frac{d-1}{2}}\Gamma(\frac{d}{2})},\\
  f_d(k) &= \frac{(2k + d -1)\Gamma(k + d - 1)}{\Gamma(k + 1)}.
\end{align*}
Note that since $k$ and $d$ are integers, $f_d(k)$ is a polynomial in $k\in\mathbb{Z}$ of degree $d-1$ as follows:
\[
f_d(k) =\begin{cases}
  (2k + d-1)\prod_{j=1}^{d-2}(k + j) &d \ge 3\\
  2k + 1 &d=2
\end{cases}.
\]
Additionally, to simplify notation, we define the constant $U_{X,d}$ and the function $g_d(k)$ as follows:
\begin{align*}
  U_{X,d} &\coloneq 6\sqrt{2} \left( \frac{1}{1 - l_X^2} \right)^{\frac{d-1}{4}},\\
  g_d(k)&=2k+d-1.
\end{align*}
Then, Lemma~\ref{lem:Gegen_ineq} can be rephrased as
\begin{equation}
  \left|\widehat{Q}_k^{(d)}(x)\right|\leq U_{X,d}g_d(k)^{\frac{1}{4}}.  \label{eq:lem}
\end{equation}
Then, 
\begin{align}
\sum_{x,y \in X} \widehat{Q}_k^{(d)}(\langle \xi_1,\xi_2 \rangle)
&= |X|
\widehat{Q}_k^{(d)}(1) + a_X \widehat{Q}_k^{(d)}(-1) + \sum_{\substack{\xi_1,\xi_2 \in X \\ \langle \xi_1,\xi_2 \rangle \neq \pm 1}} \widehat{Q}_k^{(d)}(\langle \xi_1,\xi_2 \rangle) \nonumber\\
&\ge |X|
\widehat{Q}_k^{(d)}(1) + a_X \widehat{Q}_k^{(d)}(-1) - \sum_{\substack{\xi_1,\xi_2 \in X \\ \langle \xi_1,\xi_2 \rangle \neq \pm 1}} \left|\widehat{Q}_k^{(d)}(\langle \xi_1,\xi_2 \rangle)\right| \nonumber\\
&> |X|
\widehat{Q}_k^{(d)}(1) + a_X \widehat{Q}_k^{(d)}(-1) - A_XU_{X,d} g_d(k)^{\frac{1}{4}}. \label{eq:gegen}
\end{align}
Note that in the last inequality we used Equation~\ref{eq:lem}, where $A_X$ is the number of pairs $(\xi_1,\xi_2)$ in $X$ for which $|\la\xi_1,\xi_2\ra|\neq1$.
\begin{enumerate}
  \item \textbf{When $k$ is even}
  
  We show that if 
  \[k\geq\left( \left(\frac{3A_X}{|X|+a_X}\right)^4\cdot\left(1-l_X^2\right)^{-(d-1)}\cdot\left(2^{2d+3}\Gamma\left(\frac{d}{2}\right)^4\right)\right)^{\frac{1}{2d-3}}\] 
  then $k\notin\Hst(X)$.
Since $\widehat{Q}^{(d)}_k$ is an even function when $k$ is even, from Equation~\ref{eq:edge} and Equation~\ref{eq:gegen},
  \begin{align}
    \sum_{x,y \in X} \widehat{Q}_k^{(d)}(\langle \xi_1,\xi_2 \rangle)&>(|X|+a_X)|\widehat{Q}_k^{(d)}(1)|
- A_XU_{X,d} g_d(k)^{\frac{1}{4}}\nonumber\\
    &=(|X| + a_X)C_d f_d(k)^{\frac{1}{2}} - A_XU_{X,d} g_d(k)^{\frac{1}{4}} .\label{eq:ine}
  \end{align}
Here, we show that
\begin{equation}
(|X| + a_X)C_d f_d(k)^{\frac{1}{2}} - A_XU_{X,d} g_d(k)^{\frac{1}{4}} \ge 0 \label{eq:ine2}
\end{equation}
when 
\[
  k \geq \left( \frac{A_X^4U_{X,d}^4}{2(|X|+a_X)^4 C_d^4} \right)^{\frac{1}{2d-3}}.
\]
Since \(f_d(k)\ge (2k+d-1)k^{d-2}=g_d(k)k^{d-2}\) holds and \(g_d(k)=2k+d-1>2k\), we have
\begin{align*}
  (|X|+a_X)^4C_{d}^4(f_d(k))^2
  &\ge (|X|+a_X)^4C_{d}^4(g_d(k)k^{d-2})^2 \\
  &= (|X|+a_X)^4C_{d}^4 k^{2d-4}g_d(k)^2 \\
  &> 2(|X|+a_X)^4C_{d}^4 k^{2d-3}g_d(k)\\
  &\geq A_X^4U_{X,d}^4 g_d(k).
\end{align*}
Therefore, the left-hand side of Equation~\ref{eq:ine2} is positive; in particular, Equation~\ref{eq:ine2} holds.
From Equation~\ref{eq:ine} and Equation~\ref{eq:ine2}, if $k$ satisfies these conditions, then 
\[
  \sum_{x,y \in X} \widehat{Q}_k^{(d)}(\langle \xi_1,\xi_2 \rangle) > (|X| + a_X)C_d f_d(k)^{\frac{1}{2}} - \textcolor{black}{A_XU_{X,d}} g_d(k)^{\frac{1}{4}} > 0.
\]
From Theorem~\ref{thm:spd}, the above inequality implies $k \notin \mathrm{Hst}(X)$.
Finally, from a straightforward calculation, we obtain 
\begin{equation*}
  \frac{A_X^4U_{X,d}^4}{2(|X|+a_X)^4 C_d^4}=\left(\frac{3A_X}{|X|+a_X}\right)^4\left(1-l_X^2\right)^{-(d-1)}\left(2^{2d+3}\Gamma\left(\frac{d}{2}\right)^4\right).
\end{equation*}

\item \textbf{When $k$ is odd}

Since $\widehat{Q}^{(d)}_k$ is an odd function when $k$ is odd, from
Equation~\ref{eq:edge} and Equation~\ref{eq:gegen}, we have
\[
  \sum_{x,y \in X} \widehat{Q}_k^{(d)}(\langle x,y \rangle)
  >
  (|X| - a_X)C_d f_d(k)^{\frac{1}{2}}
  -
  A_XU_{X,d} g_d(k)^{\frac{1}{4}}.
\]
By an argument similar to the case when $k$ is even, we can show that
\[
  (|X| - a_X)C_d f_d(k)^{\frac{1}{2}}
  -
  A_XU_{X,d} g_d(k)^{\frac{1}{4}}
  \geq 0
\]
when $k$ and $|X|-a_X$ satisfy the specified conditions. Hence
\[
  \sum_{x,y \in X} \widehat{Q}_k^{(d)}(\langle x,y \rangle)>0.
\]
Therefore, by \Cref{thm:spd}, we have $k\notin\Hst(X)$.
\end{enumerate}

\end{proof}

\begin{rem}
  The condition $|X| - a_X = 0$ is equivalent to $X$ being an antipodal set.
Therefore, Theorem~\ref{thm:main} implies that if $X$ is not antipodal, $d \ge 2$, and $k$ is odd and sufficiently large compared to $d$ and $|X|$, then $k \notin \mathrm{Hst}(X)$.
  Even if $X$ is an antipodal set, from the latter inequality of Theorem~\ref{thm:main}, $\Hst(X)\cap\{2k\mid k\in\NN\}$ is always a finite set.
Furthermore, if $X$ is antipodal, that is, $a_X=|X|$ and $A_X=|X|^2-2|X|$,
  \[
  \max(\Hst(X)\cap\{2k\mid k\in\NN\})<\left( 3^4(|X|-2)^4\cdot\left(1-l_X^2\right)^{-(d-1)}\cdot \left(2^{2d-1}\Gamma\left(\frac{d}{2}\right)^4\right)\right)^{\frac{1}{2d-3}}.
\]
\end{rem}

Finally, we show that a property of $\Hst(X)$ when $X$ is an antipodal set.

\begin{thm}\label{thm:HSTanti} 
  Let $X\subset\Sd^d$ be an antipodal set.
  Then, 
  \[
    \Hst(X)\supset\{2k-1\mid k\in\NN\}.
\]
\end{thm}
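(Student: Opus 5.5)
The plan is to argue directly from the definition of $\Hst(X)$, using the antipodal involution $\xi\mapsto-\xi$ on $X$. Alternatively one could use \Cref{thm:spd} together with the parity of the Gegenbauer polynomials; I sketch both and would present the direct one as the main proof.

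Fix an odd integer $k=2m-1$ and an arbitrary $P\in\Harm_k(d+1)$. Since $P$ is homogeneous of degree $k$, we have $P(-\xi)=(-1)^kP(\xi)=-P(\xi)$ for every $\xi\in\RR^{d+1}$. Because $X=-X$, the map $\xi\mapsto-\xi$ is a bijection of $X$ onto itself. Reindexing the sum gives
\[
  \sum_{\xi\in X}P(\xi)=\sum_{\xi\in X}P(-\xi)=-\sum_{\xi\in X}P(\xi),
\]
so $\sum_{\xi\in X}P(\xi)=0$. This holds for every $P\in\Harm_k(d+1)$, hence $k\in\Hst(X)$. Since $k$ was an arbitrary odd positive integer, $\{2k-1\mid k\in\NN\}\subset\Hst(X)$.

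As a cross-check, the same conclusion follows from \Cref{thm:spd}. For odd $k$, $Q_k^{(d)}$ is an odd function, as already used in the proof of \Cref{thm:main}. Under the bijection $(x,y)\mapsto(x,-y)$ of $X^2$, we have $\langle x,-y\rangle=-\langle x,y\rangle$. Therefore $\sum_{x,y}Q_k^{(d)}(\langle x,y\rangle)$ equals its own negative and must vanish.

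There is no genuine obstacle here. The only points to state explicitly are that $X=-X$ makes negation a bijection of $X$ (which holds because negation is an involution), and that $P(-\xi)=-P(\xi)$ follows from homogeneity alone, so harmonicity is not even needed. The direct argument also covers $d=1$, although the statement is placed in the $d\ge2$ section.
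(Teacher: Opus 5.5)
Your proof is correct, and your main argument takes a more elementary route than the paper's. The paper does not argue from the definition of $\Hst(X)$. Instead it invokes the Gegenbauer characterization \Cref{thm:spd} and splits $X=X_1\sqcup X_2$ with $X_2=-X_1$. It then expands $\sum_{x,y\in X}\widehat{Q}^{(d)}_{2k-1}(\la x,y\ra)$ into four blocks indexed by $X_1\times X_1$, and uses that $\widehat{Q}^{(d)}_{2k-1}$ is odd to cancel them in pairs. That is your ``cross-check'', except that your involution $(x,y)\mapsto(x,-y)$ of $X^2$ does the cancellation in one line, without choosing the half $X_1$.

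Your main argument avoids \Cref{thm:spd} and any property of Gegenbauer polynomials. It uses only two facts: an odd-degree homogeneous polynomial is an odd function, and negation permutes $X$. This makes clear that harmonicity plays no role: the sum over $X$ of any odd-degree homogeneous polynomial vanishes. It also applies verbatim for every $d\ge1$. The paper's version has a different advantage: it keeps the odd-degree vanishing in the same double sum $\sum_{x,y}\widehat{Q}^{(d)}_k(\la x,y\ra)$ whose sign is estimated in \Cref{thm:main}. That matches how the two results are combined in \Cref{thm:hstGEQ2}.
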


While \Cref{thm:HSTanti} is a well-known result with a simple proof, we present it here for the sake of completeness.

\begin{proof}[Proof of \Cref{thm:HSTanti}]
Since $X$ is an antipodal set, we can partition $X$ into disjoint subsets $X_1$ and $X_2$ such that $X_1=-X_2$.
Then, since $\widehat{Q}_{2k-1}^{(d)}$ is an odd function,
  \begin{align*}
    \sum_{x,y \in X} \widehat{Q}_{2k-1}^{(d)}(\langle x,y \rangle)&=\sum_{x,y \in X_1} \widehat{Q}_{2k-1}^{(d)}(\langle x,y \rangle)+\sum_{x,y \in X_2} \widehat{Q}_{2k-1}^{(d)}(\langle x,y \rangle)+\sum_{\substack{x\in X_1\\y \in X_2}} \widehat{Q}_{2k-1}^{(d)}(\langle x,y \rangle)+\sum_{\substack{x\in X_2\\y \in X_1}} \widehat{Q}_{2k-1}^{(d)}(\langle x,y \rangle)\\
    &=\sum_{x,y \in X_1} \widehat{Q}_{2k-1}^{(d)}(\langle x,y \rangle)+\sum_{x,y \in X_1} \widehat{Q}_{2k-1}^{(d)}(\langle -x,-y \rangle)+\sum_{x,y\in X_1} \widehat{Q}_{2k-1}^{(d)}(\langle x,-y \rangle)+\sum_{x,y\in X_1} \widehat{Q}_{2k-1}^{(d)}(\langle -x,y \rangle)\\
    &=2\left(\sum_{x,y \in X_1} \widehat{Q}_{2k-1}^{(d)}(\langle x,y \rangle)-\sum_{x,y\in X_1} \widehat{Q}_{2k-1}^{(d)}(\langle x,y \rangle)\right)\\
    &=0.
\end{align*}
\end{proof}

\begin{thm}\label{thm:hstGEQ2}
  Let $X$ be a finite subset of $\Sd^d$ and assume $d\geq2$.
\textcolor{black}{If $X$ is an antipodal set} \textcolor{black}{and $|X|\geq3$,} then there exists a finite subset $N\subset\{k\in\NN\mid \GCD(k,2)=2\}$ such that 
  \[
    \Hst(X)=N\cup\{k\in\NN\mid \GCD(k,2)=1\}.
  \]
Additionally, if $N\neq\emptyset$ then 
  \[
    \max(N)\leq \left( \left(\frac{3A_X}{|X|+a_X}\right)^4\left(1-l_X^2\right)^{-(d-1)}\left(2^{2d+3}\Gamma\left(\frac{d}{2}\right)^4\right)\right)^{\frac{1}{2d-3}}.
\]
\textcolor{black}{In particular, $X$ is an infinite strength spherical design.}
\end{thm}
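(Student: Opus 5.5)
The argument simply combines \Cref{thm:HSTanti} and \Cref{thm:main}. First, since $X$ is antipodal, \Cref{thm:HSTanti} gives $\{2k-1\mid k\in\NN\}\subset\Hst(X)$. Note that $\{k\in\NN\mid \GCD(k,2)=1\}$ is exactly the set of odd positive integers. Define
\[
N\coloneq \Hst(X)\cap\{2k\mid k\in\NN\}\subset\{k\in\NN\mid \GCD(k,2)=2\}.
\]
Then every element of $\Hst(X)$ is either odd or lies in $N$, so
\[
\Hst(X)=N\cup\{k\in\NN\mid \GCD(k,2)=1\}.
\]

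It remains to show that $N$ is finite and satisfies the stated bound. Here I check the hypothesis of \Cref{thm:main}: some pair $(x,y)\in X^2$ has $|\la x,y\ra|\neq 1$. By the remark after \Cref{lem:Gegen_ineq}, this holds whenever $|X|\geq 3$. Directly: take $x\in X$. Since $X=-X$, the point $-x$ is also in $X$. Because $|X|\ge 3$, there is a third point $y\notin\{x,-x\}$. Points of $\Sd^d$ have $\la x,y\ra=\pm1$ only when $y=\pm x$, so $|\la x,y\ra|\ne1$.

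If $N\neq\emptyset$, the first inequality of \Cref{thm:main} gives
\[
\max(N)<\left(\left(\frac{3A_X}{|X|+a_X}\right)^4\left(1-l_X^2\right)^{-(d-1)}\left(2^{2d+3}\Gamma\left(\frac{d}{2}\right)^4\right)\right)^{\frac{1}{2d-3}}.
\]
This strict inequality implies the claimed non-strict bound. Since $N$ is a set of positive integers bounded above, it is finite. Finally, $\Hst(X)$ contains all odd integers, so it is infinite and $X$ is an infinite strength spherical design.

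There is no genuine obstacle here. The only points needing care are verifying the nondegeneracy hypothesis of \Cref{thm:main} from $|X|\ge3$ and antipodality, and noting that the odd part of $\Hst(X)$ is exactly the $\GCD(k,2)=1$ set.
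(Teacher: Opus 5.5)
Your proposal is correct and follows essentially the same route as the paper: \Cref{thm:HSTanti} gives all odd degrees, and the first inequality of \Cref{thm:main} gives finiteness of the even part $N$ together with the stated bound. Your explicit check that antipodality plus $|X|\ge 3$ produces a pair with $|\la x,y\ra|\neq 1$ fills in a hypothesis of \Cref{thm:main} that the paper's proof uses without verifying it.
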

\begin{proof}
  We denote 
  \[
  \Hst(X)\cap\{k\in\NN\mid \GCD(k,2)=1\}
  \] 
  and 
  \[
  \Hst(X)\cap\{k\in\NN\mid \GCD(k,2)=2\}
  \] 
  by $\Hst(X)_{\text{odd}}$ and $\Hst(X)_{\text{even}}$, respectively.
From \textcolor{black}{Theorem~\ref{thm:HSTanti}}, 
\[
  \Hst(X)_{\text{odd}}=\{k\in\NN\mid \GCD(k,2)=1\}.
\] 
  From Theorem~\ref{thm:main}, $\Hst(X)_{\text{even}}$ is a finite set and the upper bound of the maximum value is given.
Therefore, 
  \begin{align*}
    \Hst(X)&=\Hst(X)\cap\NN\\
    &=(\Hst(X)\cap\{k\in\NN\mid \GCD(k,2)=2\})\cup(\Hst(X)\cap\{k\in\NN\mid \GCD(k,2)=1\})\\
    &=N\cup \{k\in\NN\mid \GCD(k,2)=1\},
  \end{align*}
  where $N=\Hst(X)_{\text{even}}$ is a finite set.
\end{proof}
\begin{rem}\label{rem:two-point-antipodal}
    \textcolor{black}{When $|X|=2$ and $X$ is an antipodal set, then 
    \[
    \Hst(X)=\{k\in\mathbb{N}\mid \GCD(k,2)=1\}.
    \]}
\end{rem}

Then, we prove Theorem~\ref{thm:antipodal}.

\begin{proof}[Proof of Theorem~\ref{thm:antipodal}]
  Let $d\geq2$ and let $X$ be an infinite strength spherical design in $\Sd^d$; that is, $\Hst(X)$ is an infinite set.
  \textcolor{black}{First, we assume that $|X|\geq3$.}
From Theorem~\ref{thm:main}, $\Hst(X)\cap\{2k\mid k\in\NN\}$ is always a finite set.
Therefore, $\Hst(X)\cap\{2k-1\mid k\in\NN\}$ must be infinite.
However, \Cref{thm:main} also implies if $X$ is not antipodal then $\Hst(X)\cap\{2k-1\mid k\in\NN\}$ must be finite.
Thus, $X$ must be an antipodal set.
Conversely, assume that $X$ is an antipodal set.
Then, from \Cref{thm:hstGEQ2}, $X$ is infinite strength spherical design.
\textcolor{black}{From the above, when $|X|\geq3$, we obtain $X$ is an infinite strength spherical design if and only if $X$ is antipodal. }

\textcolor{black}{
It remains to consider the cases \(|X|\le 2\).
If \(|X|=1\), then, for every \(k\in\NN\),
\[
  \sum_{x,y\in X} Q_k^{(d)}(\langle x,y\rangle)=Q_k^{(d)}(1)\neq0.
\]
Hence, by Theorem~\ref{thm:spd}, we have \(\Hst(X)=\emptyset\).
If \(|X|=2\) and \(X\) is not antipodal, then there exists a pair \((x,y)\in X^2\) such that \(|\langle x,y\rangle|\ne1\), so Theorem~\ref{thm:main} applies and implies that \(\Hst(X)\) is finite.
If \(|X|=2\) and \(X\) is antipodal, then \(X=\{x,-x\}\), and \(\Hst(X)=2\NN-1\).}

\end{proof}

\subsection{Application of Theorem~\ref{thm:main}}\label{ssec:appinf}

In this subsection, for a given $k\in\mathbb{N}$, we consider a lower bound on the size of $X$ for which $k\in\Hst(X)$.
First, we state a well-known result regarding such a lower bound, referred to as a Fisher-type inequality.
\begin{thm}[\cite{D1977}]\label{thm:Fish}
  Let $X$ be any finite subset of $\Sd^{d}$.
If $\{1,\ldots,2k\}\subset\Hst(X)$, then 
  \[
      |X|\geq\begin{pmatrix}
      d+k\\
      d
    \end{pmatrix}+\begin{pmatrix}
      d+k-1\\
      d
    \end{pmatrix},
  \]
  and if $\{1,\ldots,2k-1\}\subset\Hst(X)$, then 
  \[
      |X|\geq2\begin{pmatrix}
      d+k-1\\
      d
    \end{pmatrix}.
\]
\end{thm}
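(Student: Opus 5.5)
This is the classical Delsarte--Goethals--Seidel Fisher-type bound, and I would prove it by the standard linear-algebra argument: view the polynomial spaces as spaces of functions restricted to $X$. Let $\mathrm{Hom}_j$ denote homogeneous polynomials of degree $j$ in $d+1$ variables. Put
\[
V_k=\sum_{j\equiv k\ (\mathrm{mod}\ 2),\, j\le k}\Harm_j(d+1).
\]
On the sphere $V_k$ coincides with $\mathrm{Hom}_k|_{\Sd^d}$. Its dimension is $\binom{d+k}{d}+\binom{d+k-1}{d}$, since $\dim \mathrm{Hom}_k=\binom{d+k}{d}$ and the sums of $\dim\Harm_j$ over $j\le k$ of one parity telescope. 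Likewise, with $W_k=\bigoplus_{j\le k}\Harm_j(d+1)$ (all polynomials of degree at most $k$ on the sphere), we have $\dim W_k=\dim \mathrm{Hom}_k+\dim\mathrm{Hom}_{k-1}=\binom{d+k}{d}+\binom{d+k-1}{d}$.

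For the first bound, assume $\{1,\dots,2k\}\subset\Hst(X)$. Then $X$ is a spherical $2k$-design in the sense of \eqref{eq:cubature}, since every polynomial of degree $\le 2k$ restricted to the sphere decomposes into harmonic components of degrees $0,\dots,2k$, and the components of positive degree have zero sum over $X$ and zero integral. The key step is then the following. For $f,g\in W_k$ the product $fg$ has degree $\le 2k$, so
\[
\frac{1}{|X|}\sum_{x\in X}f(x)g(x)=\frac{1}{|\Sd^d|}\int_{\Sd^d} fg.
\]
Hence if $f\in W_k$ vanishes on $X$, taking $g=f$ gives $\int f^2=0$, so $f=0$. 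The restriction map $W_k\to\RR^X$ is therefore injective, and $|X|\ge\dim W_k$, which is the first bound.

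For the second bound, assume $\{1,\dots,2k-1\}\subset\Hst(X)$, so $X$ is a $(2k-1)$-design. First I would handle the antipodal symmetrization. Set $Y=X\cup(-X)$ as a multiset. Odd-degree harmonic sums vanish automatically on $Y$, and even ones up to $2k-2$ are inherited from $X$. I would apply the same argument to $V_{k-1}$, whose elements have a fixed parity, so that $f^2$ has degree $\le 2k-2$. This gives only $|X|\ge\dim V_{k-1}=\binom{d+k-1}{d}+\binom{d+k-2}{d}$, which is too weak, so I would refine it as in Delsarte--Goethals--Seidel. Take $f\in V_{k-1}$ (or the space of opposite parity) and $g=x_i f$ for a coordinate $x_i$. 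Then $f\cdot g$ has degree $\le 2k-1$, which is still within the design range. I would use this to show that the restriction map of $\mathrm{Hom}_{k-1}$ to the projective image $X/\{\pm1\}$ is injective. The count then reads $|X|\ge 2\dim(\text{span of degree-}(k-1)\text{ homogeneous on the sphere})/\ldots$. More precisely, the cleanest route is: choose $e\in\Sd^d$ with $\langle e,x\rangle\ne0$ for all $x\in X$, and split $X=X^+\sqcup X^-$ by the sign of $\langle e,x\rangle$. Then show that the restriction of $\mathrm{Hom}_{k-1}$ to $X^+$ is injective, and likewise to $X^-$. The injectivity comes from the fact that if $f\in\mathrm{Hom}_{k-1}$ vanishes on $X^{+}$, then
\[
\sum_{x\in X}\bigl(\langle e,x\rangle+|\langle e,x\rangle|\bigr)f(x)^2\ldots
\]
this step is the delicate one. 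I would instead use the polynomial $h(x)=(1+\langle e,x\rangle)f(x)^2$, which has degree $\le 2k-1$. This yields $|X^\pm|\ge\dim\mathrm{Hom}_{k-1}=\binom{d+k-1}{d}$, and hence the bound $2\binom{d+k-1}{d}$. The main obstacle is making this sign-split injectivity argument rigorous, since the weight $1+\langle e,x\rangle$ does not vanish on $X^-$. If it cannot be repaired directly, I would fall back on citing \cite{D1977}, as the statement does.
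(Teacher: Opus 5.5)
The odd case has a genuine gap; the first bound is fine.

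\textbf{The first bound.} Your argument is the standard one and is correct. On $\Sd^d$, $W_k$ is the space of all polynomials of degree at most $k$, its dimension is $\binom{d+k}{d}+\binom{d+k-1}{d}$, and the $2k$-design property makes the restriction map $W_k\to\RR^X$ injective. There is one slip, which does not affect this bound: $V_k=\mathrm{Hom}_k|_{\Sd^d}$ has dimension $\binom{d+k}{d}$, since that is what the one-parity telescoping actually gives. So the weak bound you mention for the odd case is $\binom{d+k-1}{d}$, not $\binom{d+k-1}{d}+\binom{d+k-2}{d}$. The paper itself gives no argument for this statement and simply cites \cite{D1977}.

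\textbf{The second bound.} You never show that restriction $\mathrm{Hom}_{k-1}\to\RR^{X^{\pm}}$ is injective, and the weight $1+\langle e,x\rangle$ cannot do it. The missing fact is this: for $f\in\mathrm{Hom}_{k-1}$, the polynomial $\langle e,x\rangle f(x)^2$ is homogeneous of odd degree $2k-1$. So its integral over $\Sd^d$ is $0$, and because $\{1,\ldots,2k-1\}\subset\Hst(X)$, its sum over $X$ is also $0$. This is exactly why your weight gives nothing: $\sum_{x\in X}(1+\langle e,x\rangle)f(x)^2$ just reproduces $\sum_{x\in X}f(x)^2$.

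The fix is to use the odd weight $\langle e,x\rangle$ by itself. Suppose $f$ vanishes on $X^+$. The vanishing sum then reduces to $\sum_{x\in X^-}\langle e,x\rangle f(x)^2=0$, and every weight there is strictly negative, so $f$ vanishes on $X^-$ too. Hence $f|_X=0$. Since $f^2$ has degree $2k-2\le 2k-1$, this gives $\int_{\Sd^d}f^2=0$, i.e.\ $f=0$. Therefore $|X^{+}|\ge\binom{d+k-1}{d}$ and $|X^{-}|\ge\binom{d+k-1}{d}$, so $|X|\ge 2\binom{d+k-1}{d}$.

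Your $x_i f$ hint also works as an alternative route. The map $(f,g)\mapsto\bigl(f+\langle e,\cdot\rangle g\bigr)|_X$ on $\mathrm{Hom}_{k-1}\oplus\mathrm{Hom}_{k-1}$ is injective. Pairing with $f$ kills the odd term $\langle e,x\rangle f g$ and forces $\int f^2=0$. Then $\langle e,x\rangle\neq 0$ on $X$ forces $g|_X=0$, hence $g=0$.
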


From Theorem~\ref{thm:Fish}, it can be restated that if $\{1,\ldots,k\}\subset\Hst(X)$, then the size of $X$ grows at least on the order of $k^d$ with respect to $k$.
On the other hand, another lower bound was obtained by Bannai et al.~\cite{BOT2015}.
\begin{thm}[\cite{BOT2015}]\label{thm:Fish2}
  Let $2k$ be any even number and let $X$ be any finite subset of $\Sd^{d}$ with $2k\in\Hst(X)$.
Then, there exists a constant $c_{d+1,t}$ such that the following inequality holds:
  \[
    |X|\geq1+\frac{1}{c_{d+1,t}}\left(\begin{pmatrix}
      d+2k\\2k
    \end{pmatrix}-\begin{pmatrix}
      d+2k-2\\2k-2
    \end{pmatrix}\right).
\]
\end{thm}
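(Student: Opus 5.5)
The plan is a Delsarte-type averaging argument applied to the single degree $2k$, via the Gegenbauer characterization of \Cref{thm:spd}. Set $t=2k$ and
\[
  N_{d,t}=\dim\Harm_{t}(d+1)=\binom{d+t}{t}-\binom{d+t-2}{t-2}.
\]
Instead of $Q_t^{(d)}$ itself, I will use the zonal (reproducing) kernel of $\Harm_t(d+1)$. Choose an orthonormal basis $\{\phi_i\}_{i=1}^{N_{d,t}}$ of $\Harm_t(d+1)$ with respect to the normalized surface measure. The addition formula then gives
\[
  \sum_i\phi_i(x)\phi_i(y)=F_t(\langle x,y\rangle),
\]
where $F_t$ is a positive multiple of $Q_t^{(d)}$, and setting $x=y$ and averaging over the sphere yields $F_t(1)=N_{d,t}$.

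The first step is to define the constant as
\[
  c_{d+1,t}:=-\min_{s\in[-1,1]}F_t(s).
\]
I check that it is positive. The integral of $F_t(\langle x,y\rangle)$ over $y$ vanishes because $t\ge 1$ and harmonic polynomials of positive degree integrate to zero. Since $F_t(1)>0$, $F_t$ must take a negative value on $[-1,1]$, so $c_{d+1,t}>0$.

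Next, by \Cref{thm:spd} (the condition is unchanged under rescaling $Q_t^{(d)}$ by a positive constant), the hypothesis $t\in\Hst(X)$ gives
\[
  0=\sum_{x,y\in X}F_t(\langle x,y\rangle)=|X|\,N_{d,t}+\sum_{x\neq y}F_t(\langle x,y\rangle).
\]
Each of the $|X|(|X|-1)$ off-diagonal terms is at least $-c_{d+1,t}$, so
\[
  0\ge |X|\,N_{d,t}-|X|(|X|-1)\,c_{d+1,t}.
\]
Dividing by $|X|\,c_{d+1,t}>0$ gives $|X|-1\ge N_{d,t}/c_{d+1,t}$, which is the stated inequality once $N_{d,t}$ is written in binomial form.

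I expect the main obstacle to be the bookkeeping rather than the estimate: pinning down the normalization of $F_t$ so that $F_t(1)$ is exactly $\dim\Harm_t(d+1)$. That value is what makes the binomial expression appear, and it requires the addition theorem together with the dimension formula for harmonic polynomials. Evenness of $t$ is not needed for the argument itself. It matters only for the behaviour of $c_{d+1,t}$, which for even $t$ is attained at an interior point rather than at $s=-1$; the paper cites the result in that form.
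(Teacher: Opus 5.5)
Your argument is correct and is essentially the standard proof of the cited bound of Bannai--Okuda--Tagami; the paper itself gives no proof and only quotes that result. The proof splits $\sum_{x,y\in X}F_t(\langle x,y\rangle)=0$ into the diagonal part $|X|\dim\Harm_t(d+1)$ and off-diagonal terms each bounded below by $-c_{d+1,t}$, which is the same as applying the linear programming bound with test polynomial $F_t+c_{d+1,t}$. Your normalization also agrees with the paper's remark that $c_{2,t}=2$, since on $\Sd^1$ you get $F_t(\cos\theta)=2\cos(t\theta)$, whose minimum is $-2$.
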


The constant \(c_{d+1,t}\) in Theorem~\ref{thm:Fish2} is studied in detail in~\cite{BOT2015}.
For example, when $d=1$, $c_{2,t}=2$ for all $t$.
It is also shown in~\cite{BOT2015} that for a fixed $d\geq2$, as $k$ goes to infinity, the limit of the lower bound of Theorem~\ref{thm:Fish2} is constant with respect to $k$ (Proposition~4.1 in~\cite{BOT2015}).
Therefore, Theorem~\ref{thm:Fish2} implies that if $2k\in\Hst(X)$ and $k$ is sufficiently large, then the size of $X$ is at least $b_d$, which is a constant determined solely by $d$.
  

Actually, from Theorem~\ref{thm:main}, we can also deduce a lower bound on the size of $X$.
Hereafter, given $X\subset\Sd^d$, we use the constant $l_X$ as defined in Theorem~\ref{thm:main}.
\begin{thm}\label{thm:lb}
  Let $X\subset\Sd^d$ be an antipodal set with $|X|\geq4$ and the maximal nontrivial inner product $\alpha$; that is, 
  \[\alpha=\max \{ \langle x, y \rangle : x, y \in X, |\langle x, y \rangle|\neq1 \}.\] 
  For any natural number $k$, if $2k\in\Hst(X)$, then 
  \[|X|>2+\frac{\left(1-\alpha^2\right)^{\frac{d-1}{4}}}{3\sqrt2\Gamma\left(\frac{d}{2}\right)}k^{\frac{2d-3}{4}}.\]
\end{thm}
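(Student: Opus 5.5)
The plan is to run the argument of \Cref{thm:main} backwards: starting from the vanishing of the Gegenbauer sum at degree $2k$, extract a lower bound on $|X|-2$ rather than an upper bound on $k$. Since $|X|\geq4$ and $X$ is antipodal, $X$ contains two points $x,y$ with $y\neq\pm x$, so $|\la x,y\ra|\neq1$. Hence \Cref{lem:Gegen_ineq} applies. Antipodality also gives $l_X=\alpha$: if $\la x,y\ra=-\beta$ with $|\beta|<1$, then $\la x,-y\ra=\beta$, so the maximum of $|\la x,y\ra|$ over nontrivial pairs equals the maximum of $\la x,y\ra$.

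First, by \Cref{thm:spd}, the hypothesis $2k\in\Hst(X)$ gives $\sum_{x,y\in X}\widehat{Q}_{2k}^{(d)}(\la x,y\ra)=0$, since $\widehat{Q}$ is a positive multiple of $Q$. For an antipodal set we have $a_X=|X|$ and $A_X=|X|^2-2|X|$. Because $\widehat{Q}_{2k}^{(d)}$ is even, the pairs with inner product $\pm1$ contribute $2|X|\,C_d f_d(2k)^{1/2}$ by \eqref{eq:edge}. The remaining $A_X$ pairs are each bounded in absolute value by $U_{X,d}\,g_d(2k)^{1/4}$, strictly, via \Cref{lem:Gegen_ineq}. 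The vanishing sum therefore forces
\[
2|X|\,C_d f_d(2k)^{\frac12} < (|X|^2-2|X|)\,U_{X,d}\,g_d(2k)^{\frac14},
\qquad\text{that is,}\qquad
|X|-2 > \frac{2C_d\, f_d(2k)^{1/2}}{U_{X,d}\, g_d(2k)^{1/4}}.
\]

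Next I would bound the ratio $f_d(2k)^{1/2}/g_d(2k)^{1/4}$ from below. As in the proof of \Cref{thm:main}, $f_d(2k)\ge g_d(2k)(2k)^{d-2}$; this also holds when $d=2$, where $f_2=g_2$. Hence the ratio is at least $g_d(2k)^{1/4}(2k)^{(d-2)/2}$. Using $g_d(2k)=4k+d-1>4k$, this is larger than $(4k)^{1/4}(2k)^{(d-2)/2}=2^{(d-1)/2}k^{(2d-3)/4}$.

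Finally, I substitute $C_d=2^{-(d-1)/2}\Gamma(d/2)^{-1}$ and $U_{X,d}=6\sqrt2\,(1-\alpha^2)^{-(d-1)/4}$. The powers $2^{\pm(d-1)/2}$ cancel, and $2/(6\sqrt2)=1/(3\sqrt2)$, which yields exactly
\[
|X|>2+\frac{(1-\alpha^2)^{\frac{d-1}{4}}}{3\sqrt2\,\Gamma(d/2)}\,k^{\frac{2d-3}{4}}.
\]
No step is a genuine obstacle. The only points needing care are keeping the inequality strict, which comes from the strict bound in \Cref{lem:Gegen_ineq}, and checking the identification $l_X=\alpha$.
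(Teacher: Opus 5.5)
Your proof is correct and is essentially the paper's argument. The paper identifies $l_X=\alpha$ and quotes the antipodal specialization of \Cref{thm:main} (with $a_X=|X|$, $A_X=|X|^2-2|X|$) to get $(2k)^{2d-3}<3^4(|X|-2)^4(1-\alpha^2)^{-(d-1)}2^{2d-1}\Gamma(d/2)^4$, then solves for $|X|$. You instead apply the same estimates directly at degree $2k$ (namely \Cref{lem:Gegen_ineq}, \eqref{eq:edge}, $f_d(n)\ge g_d(n)n^{d-2}$ and $g_d(n)>2n$), and you arrive at the identical constant $\frac{1}{3\sqrt2\,\Gamma(d/2)}$.
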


\begin{proof}
  Let $l_X$ be the real number defined in Theorem~\ref{thm:main}.
If $X$ is antipodal, then $\alpha=l_X$.
  Therefore, from Theorem~\ref{thm:main},
  \[\max(\Hst(X)\cap\{2k\mid k\in\NN\})<\left(3^4(|X|-2)^4\left(1-\alpha^2\right)^{-(d-1)}\left(2^{2d-1}\Gamma\left(\frac{d}{2}\right)^4\right)\right)^{\frac{1}{2d-3}}.\]
  When $2k\in \Hst(X)$, then 
  \begin{align*}
     2k&<\left(3^4(|X|-2)^4\left(1-\alpha^2\right)^{-(d-1)}\left(2^{2d-1}\Gamma\left(\frac{d}{2}\right)^4\right)\right)^{\frac{1}{2d-3}}\\
    \Longleftrightarrow k^{2d-3}&<324(|X|-2)^4\left(1-\alpha^2\right)^{-(d-1)}\Gamma\left(\frac{d}{2}\right)^4.
\end{align*}
  Hence, 
  \begin{align*}
    (|X|-2)^4&>\frac{k^{2d-3}\left(1-\alpha^2\right)^{d-1}}{\left(3\sqrt2\Gamma\left(\frac{d}{2}\right)\right)^4}\\
    \Longleftrightarrow |X|&>2+\frac{\left(1-\alpha^2\right)^{\frac{d-1}{4}}}{3\sqrt2\Gamma\left(\frac{d}{2}\right)}k^{\frac{2d-3}{4}}.
  \end{align*}
\end{proof}




Additionally, when $X$ is non-antipodal, we can also obtain a lower bound on the size of $X$.
\begin{thm}\label{thm:lb2}
  Let $X\subset\Sd^d$ be a non-antipodal set, and \textcolor{black}{assume that there exists at least one pair $x,y\in X$ such that $|\langle x, y \rangle|\neq1$}. Define 
\[
  l_X \coloneq \max \{ |\langle x, y \rangle|
: x, y \in X, |\langle x, y \rangle|\neq1 \}.
\]
  For any natural number $k$, if $2k\in\Hst(X)$, then 
  \[
    |X|>1+\frac{(1-l_X^2)^{\frac{d-1}{4}}}{6\sqrt{2}\Gamma(\frac{d}{2})}k^{\frac{2d-3}{4}},
  \]
  and if $2k-1\in\Hst(X)$, then
  \[
    |X|>1+\left(\frac{\left(1-l_X^2\right)^{\frac{d-1}{4}}}{6\sqrt{2}\Gamma(\frac{d}{2})}\right)^{\frac{1}{2}}(k-1)^{\frac{2d-3}{8}}.
\]
\end{thm}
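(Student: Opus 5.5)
The plan is to read \Cref{thm:lb2} off from \Cref{thm:main}. The bounds there involve the ratios $A_X/(|X|\pm a_X)$, so I will bound these ratios by powers of $|X|-1$ using non-antipodality, and then solve the resulting inequality for $|X|$, as in the proof of \Cref{thm:lb}.

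\textbf{Combinatorial facts.} Each $x\in X$ has at most one point $y$ with $\langle x,y\rangle=-1$, namely $-x$. Since $X$ is non-antipodal, some $x\in X$ has $-x\notin X$, so $a_X\le |X|-1$. Put $b\coloneq |X|-a_X$; then $b\ge 1$, and the hypothesis $|X|-a_X>0$ of the odd part of \Cref{thm:main} holds. Also $A_X=|X|^2-|X|-a_X=|X|^2-2|X|+b$. This gives the two estimates
\[
\frac{A_X}{|X|+a_X}\le\frac{|X|^2-|X|}{|X|}=|X|-1,
\qquad
\frac{A_X}{|X|-a_X}=\frac{|X|^2-2|X|}{b}+1\le (|X|-1)^2 .
\]
For the second one I will use $|X|\ge2$, which holds because a pair with $|\langle x,y\rangle|\neq1$ consists of distinct points; hence $|X|^2-2|X|\ge0$ and we may drop $b$ from the denominator.

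\textbf{Even case.} Assume $2k\in\Hst(X)$. By \Cref{thm:main} and the first estimate,
\[
(2k)^{2d-3}<3^4(|X|-1)^4(1-l_X^2)^{-(d-1)}2^{2d+3}\Gamma\left(\tfrac d2\right)^4 .
\]
Dividing by $2^{2d-3}$ leaves the constant $81\cdot 2^6=5184=(6\sqrt2)^4$. Taking fourth roots then gives
\[
|X|-1>\frac{(1-l_X^2)^{\frac{d-1}{4}}}{6\sqrt2\,\Gamma(\frac d2)}\,k^{\frac{2d-3}{4}} .
\]

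\textbf{Odd case.} Assume $2k-1\in\Hst(X)$. The second part of \Cref{thm:main}, together with the second estimate, gives the same inequality with $(2k)^{2d-3}$ replaced by $(2k-1)^{2d-3}$ and $(|X|-1)^4$ replaced by $(|X|-1)^8$. Since $2k-1\ge 2(k-1)$, the same division yields
\[
(k-1)^{2d-3}<(6\sqrt2)^4\,\Gamma\left(\tfrac d2\right)^4(1-l_X^2)^{-(d-1)}(|X|-1)^8 .
\]
Taking eighth roots gives the claimed bound with exponent $\frac{2d-3}{8}$ and the square root of the constant. When $k=1$ the right-hand side of the claimed bound is $1$, and it holds trivially because $|X|\ge2$.

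The main obstacle is the second estimate, $A_X/(|X|-a_X)\le(|X|-1)^2$. It needs the parametrization by $b$ and the fact that $|X|\ge 2$. Everything else is routine arithmetic with the explicit constant from \Cref{thm:main}.
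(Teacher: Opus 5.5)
Your proposal is correct and follows the paper's proof essentially verbatim. Both apply \Cref{thm:main}, bound $A_X/(|X|+a_X)\le |X|-1$ and $A_X/(|X|-a_X)\le (|X|-1)^2$ using $0\le a_X<|X|$, and then solve for $|X|$, using $2k-1\ge 2(k-1)$ in the odd case. Your parametrization $b=|X|-a_X$ supplies a justification of the second ratio bound that the paper states without proof, and your argument implicitly relies on the section's standing assumption $d\ge 2$, which makes \Cref{thm:main} applicable and gives $2d-3>0$.
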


\begin{proof}
  Let $a_X$ be the constant defined in Theorem~\ref{thm:main}.
  Since $X$ is not antipodal, $0\leq a_X<|X|$.
From Theorem~\ref{thm:main},
  \begin{align}
    2k&<\left( \left(\frac{3(|X|^2-(|X|+a_X))}{|X|+a_X}\right)^4\cdot\left(1-l_X^2\right)^{-(d-1)}\cdot\left(2^{2d+3}\Gamma\left(\frac{d}{2}\right)^4\right)\right)^{\frac{1}{2d-3}}, \label{eq:even}\\
    2k-1&<\left( \left(\frac{3(|X|^2-(|X|+a_X))}{|X|-a_X}\right)^4\cdot\left(1-l_X^2\right)^{-(d-1)}\cdot\left(2^{2d+3}\Gamma\left(\frac{d}{2}\right)^4\right)\right)^{\frac{1}{2d-3}}.\label{eq:odd}
  \end{align}
  First, we establish upper bounds for the right-hand sides of Equations~\ref{eq:even} and \ref{eq:odd}.
Since
  \begin{align*}
    \frac{|X|^2-(|X|+a_X)}{|X|-a_X}\leq (|X|-1)^2
  \end{align*}
  and 
  \begin{align*}
    \frac{|X|^2-(|X|+a_X)}{|X|+a_X}\leq |X|-1,
  \end{align*}
  the following upper bounds also hold:
  \begin{align}
    \text{Right hand side of Equation~\ref{eq:even}}&\leq \left(3^4(|X|-1)^4\cdot\left(1-l_X^2\right)^{-(d-1)}\cdot\left(2^{2d+3}\Gamma\left(\frac{d}{2}\right)^4\right)\right)^{\frac{1}{2d-3}}, \label{eq:even2}\\
    \text{Right hand side of Equation~\ref{eq:odd}}&\leq\left(3^4(|X|-1)^8\cdot\left(1-l_X^2\right)^{-(d-1)}\cdot\left(2^{2d+3}\Gamma\left(\frac{d}{2}\right)^4\right)\right)^{\frac{1}{2d-3}}.\label{eq:odd2}
  \end{align}
  From Equation~\ref{eq:even} and Equation~\ref{eq:even2},
  \begin{align*}
    (2k)^{2d-3}&<3^4(|X|-1)^4\cdot\left(1-l_X^2\right)^{-(d-1)}\cdot\left(2^{2d+3}\Gamma\left(\frac{d}{2}\right)^4\right)\\
    \Longleftrightarrow 3^4(|X|-1)^4&>\left(1-l_X^2\right)^{d-1}\frac{(2k)^{2d-3}}{2^{2d+3}\Gamma\left(\frac{d}{2}\right)^4}\\
    \Longleftrightarrow |X|-1&>\frac{\left(1-l_X^2\right)^{\frac{d-1}{4}}}{6\sqrt2\Gamma(\frac{d}{2})}k^{\frac{2d-3}{4}}.
\end{align*}
  From Equation~\ref{eq:odd} and Equation~\ref{eq:odd2},
  \begin{align*}
    (2k-1)^{2d-3}&<3^4(|X|-1)^8\cdot\left(1-l_X^2\right)^{-(d-1)}\cdot\left(2^{2d+3}\Gamma\left(\frac{d}{2}\right)^4\right)\\
    \Longleftrightarrow 3^4(|X|-1)^8&>\left(1-l_X^2\right)^{d-1}\frac{(2k-1)^{2d-3}}{2^{2d+3}\Gamma\left(\frac{d}{2}\right)^4}\\
    &>\left(1-l_X^2\right)^{d-1}\frac{2^{2d-3}(k-1)^{2d-3}}{2^{2d+3}\Gamma\left(\frac{d}{2}\right)^4}\\
    \Longleftrightarrow |X|-1&>\left(\frac{\left(1-l_X^2\right)^{\frac{d-1}{4}}}{6\sqrt2\Gamma(\frac{d}{2})}\right)^{\frac{1}{2}}(k-1)^{\frac{2d-3}{8}}.
\end{align*}
\end{proof}

\begin{rem}
  Theorem~\ref{thm:Fish} implies that if \(\{1,\ldots,2k\}\subset\Hst(X)\), then \(|X|\) is at least of order \(k^{d}\) as \(k\to\infty\) with \(d\) fixed.
  On the other hand, Theorems~\ref{thm:lb} and~\ref{thm:lb2} imply that, under the corresponding geometric assumptions, if \(2k\in\Hst(X)\), then \(|X|\) is at least of order \(k^{(2d-3)/4}\).
  These bounds are complementary to the Fisher-type inequality: they apply to a single harmonic index \(2k\), rather than requiring the whole initial segment \(\{1,\ldots,2k\}\) to be contained in \(\Hst(X)\).

  They should also be compared with Theorem~\ref{thm:Fish2}. Unlike Theorem~\ref{thm:Fish2}, our bounds use additional geometric information about \(X\), such as antipodality and the parameter \(l_X\), or equivalently the maximal nontrivial inner product in the antipodal case.  Thus these estimates do not simply improve Theorem~\ref{thm:Fish2} in full generality; rather, they give complementary lower bounds under additional geometric assumptions.
\end{rem}

\section{Infinite strength spherical design on $\Sd^1$}\label{sec:dim1}
In this section, we classify spherical designs with infinite harmonic strength when $d=1$.
The classification of spherical designs in $\Sd^1$ was studied by Hong~\cite{H1982}.
In the paper~\cite{H1982}, they identified a class of infinite strength spherical designs called ``group-type designs''.
However, the paper did not consider the harmonic strength of these spherical designs.
In contrast, in a previous work, we proved that for any finite set $N\subset\NN$, there exists a spherical design $X\subset\Sd^1$ such that $\Hst(X)=N$~\cite{MN25}.
Therefore, in this section, we focus on cases where the harmonic strength is an infinite set.
Since we cannot apply Theorem~\ref{thm:main} when $d=1$, we need other methods to classify infinite strength spherical designs.
However, Hong~\cite{H1982} showed an equivalence condition for the harmonic strength of $X\subset\Sd^1$ by identifying \(\Sd^1\) with \(\{z\in\mathbb{C}\mid|z|=1\}\).
Using this condition, we approach this problem via algebraic methods.
Specifically, we define cyclotomic designs, which generalize group-type designs and antipodal sets, and show that an infinite strength spherical design in $\Sd^1$ must be a cyclotomic design.
Additionally, we show that cyclotomic designs can be represented by polynomials, and their harmonic strength is primarily determined by the greatest common divisor of these polynomials and certain cyclotomic polynomials.
As a result, we prove \Cref{thm:infmain} and \Cref{thm:hst}.


This section is organized as follows.
In \Cref{ssec:def}, we introduce some terminology and define cyclotomic designs.
Additionally, we present a property of cyclotomic sets for later use.
In \Cref{ssec:hst}, we discuss the harmonic strength of cyclotomic designs.
In \Cref{ssec:inf}, we present the proof of \Cref{thm:infmain}.

Before proceeding, we state a characterization of harmonic strength established by Hong~\cite{H1982}.
Hereafter, we identify \(\Sd^1\) with \(\{z\in\mathbb{C}\mid|z|=1\}=\mathbb{T}\) and we denote the \(k\)-th complex moment of $X$ by $P_k(X)$; that is,
\(P_k(X)\coloneq\sum_{x\in X}x^{k}.\)

\begin{lem}[Lemma 1 of Hong~\cite{H1982}, Lemma 2.1 in~\cite{BOT2015}]\label{lem:Hong}
  For any $X\subset\Sd^1$,
\[
  \Hst(X)=\left\{k\in\mathbb{N}  ~\middle|
~P_k(X)=0\right\}.
\]
\end{lem}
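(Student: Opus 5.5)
The plan is to reduce harmonic strength on \(\Sd^1\) to the vanishing of complex power sums. Identify \(\Sd^1\) with \(\mathbb{T}\subset\CC\) and work with the harmonic polynomials directly. The space \(\Harm_k(2)\) of real homogeneous harmonic polynomials of degree \(k\ge1\) in two variables is two-dimensional. It is spanned by \(\mathrm{Re}\,(x_1+ix_2)^k\) and \(\mathrm{Im}\,(x_1+ix_2)^k\). To justify this, observe that \(\Delta=4\partial_z\partial_{\bar z}\). A homogeneous polynomial of degree \(k\) in \(z,\bar z\) is annihilated by \(\partial_z\partial_{\bar z}\) exactly when it contains no mixed monomials \(z^a\bar z^b\) with \(a,b\ge1\). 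So the complex harmonic homogeneous polynomials of degree \(k\) are spanned by \(z^k\) and \(\bar z^k\). Taking real and imaginary parts gives the real span of \(\mathrm{Re}\,z^k\) and \(\mathrm{Im}\,z^k\). This is standard and I would only sketch it.

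Given this basis, \(k\in\Hst(X)\) means \(\sum_{\xi\in X}P(\xi)=0\) for every \(P\in\Harm_k(2)\). By linearity this is equivalent to
\[
\sum_{x\in X}\mathrm{Re}(x^k)=0\quad\text{and}\quad\sum_{x\in X}\mathrm{Im}(x^k)=0 .
\]
Here a point \(\xi=(\xi_1,\xi_2)\) is identified with \(x=\xi_1+i\xi_2\), so that \((\xi_1+i\xi_2)^k=x^k\). The two real conditions together say exactly that \(P_k(X)=\sum_{x\in X}x^k=0\), which is the claimed description of \(\Hst(X)\).

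An alternative route is through \Cref{thm:spd} with \(d=1\). There the relevant kernel is, up to a positive constant, the Chebyshev polynomial \(T_k(\cos\theta)=\cos k\theta\). For \(x,y\in X\) we have \(\mathrm{Re}(x^k\bar y^k)=\cos(k(\theta_x-\theta_y))\), so summing over \(X^2\) gives \(\sum_{x,y}\cos k(\theta_x-\theta_y)=|P_k(X)|^2\). This vanishes iff \(P_k(X)=0\). I expect no real obstacle either way. The only point needing care is the normalization at \(d=1\): the paper's Gegenbauer/Jacobi normalization \(P_k^{(-1/2,-1/2)}\) is a nonzero multiple of \(T_k\). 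For that reason I would present the direct harmonic-basis argument as the main proof.
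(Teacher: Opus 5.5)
Your proof is correct, but it takes a different route from the paper. The paper gives no argument of its own: it cites Hong's Lemma~1, which characterizes spherical \(t\)-designs on \(\Sd^1\), i.e.\ the case \(\{1,\ldots,t\}\subset\Hst(X)\), via vanishing power sums. It then invokes Lemma~2.1 of \cite{BOT2015} (stated in the paper as \Cref{thm:spd}) to pass from initial segments to individual degrees. That citation route is essentially your ``alternative'' argument. With \(d=1\), the kernel in \Cref{thm:spd} is \(Q_k^{(1)}=P_k^{(-1/2,-1/2)}\), a nonzero multiple of the Chebyshev polynomial \(T_k\), and \(\sum_{x,y\in X}\cos k(\theta_x-\theta_y)=|P_k(X)|^2\).

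Your main argument instead computes \(\Harm_k(2)=\mathrm{span}_{\RR}\{\mathrm{Re}\,z^k,\mathrm{Im}\,z^k\}\) directly from \(\Delta=4\partial_z\partial_{\bar z}\). It works degree by degree straight from the definition of \(\Hst\), and it needs neither the addition formula nor any check of the Jacobi normalization at the boundary parameter \(-1/2\).

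The paper's route keeps the \(d=1\) case uniform with Section~\ref{sec:inf}, where everything is phrased through \Cref{thm:spd}. Your route makes the lemma independent of external results.
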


Note that Hong only showed the equivalence condition for $\{1,\ldots,t\}\in\Hst(X)$, but using Lemma 2.1 in~\cite{BOT2015}, it can be reformulated as \Cref{lem:Hong}.
We primarily use Lemma~\ref{lem:Hong} for calculating the harmonic strength.

\subsection{The definition of cyclotomic designs}\label{ssec:def}

For any $X\subset\mathbb{T}$, we denote by $X^\lambda$ the set of $\lambda$-th powers of elements in $X$, that is,
\[
  X^\lambda\coloneqq\{\xi^\lambda\mid \xi\in X\}.
\]
We also denote 
\[
  \mu_\lambda\coloneqq\{z\in\mathbb{T}\mid z^\lambda=1\}
\]
and
\[
  \mu_\infty\coloneqq \bigcup_{\lambda\in\NN}\mu_\lambda
\]
as the group of roots of unity in $\CC$.
We denote $\zeta_\lambda$ as a generator of $\mu_\lambda$.
We define an equivalence relation on $\mathbb{T}$
\[
  \xi_1\sim_{\mu_{\lambda}}\xi_2\Longleftrightarrow \xi_1\xi_2^{-1}\in\mu_{\lambda},
\]
where $\lambda\in\NN\cup\{\infty\}$.
Additionally, we denote
  \[X/\!\sim_{\mu_{\lambda}}=\{X_{j,\lambda}\}_{j=1}^{m_\lambda}\]
as the partition of $X$ induced by the equivalence relation $\sim_{\mu_{\lambda}}$, where $\lambda\in\NN\cup\{\infty\}$.
Specifically, when $\lambda=\infty$, we denote $X/\!\sim_{\mu_{\infty}}=\{X_{j}\}_{j=1}^m$.
\begin{ex}
    Let 
    \[
        X=\{1,-1,i,-i,e^i,e^{(1+\tfrac{2\pi}{3})i},e^{(1+\tfrac{4\pi}{3})i},e^{2i}\}.
\]
    Then,
    \begin{align*}
        X/\!\sim_{\mu_{1}}&=\{\{1\},\{-1\},\{i\},\{-i\},\{e^i\},\{e^{(1+\tfrac{2\pi}{3})i}\},\{e^{(1+\tfrac{4\pi}{3})i}\},\{e^{2i}\}\},\\
        X/\!\sim_{\mu_{2}}&=\{\{1,-1\},\{i,-i\},\{e^i\},\{e^{(1+\tfrac{2\pi}{3})i}\},\{e^{(1+\tfrac{4\pi}{3})i}\},\{e^{2i}\}\},\\
        X/\!\sim_{\mu_{3}}&=\{\{1\},\{-1\},\{i\},\{-i\},\{e^i,e^{(1+\tfrac{2\pi}{3})i},e^{(1+\tfrac{4\pi}{3})i}\},\{e^{2i}\}\},\\
        X/\!\sim_{\mu_{4}}&=\{\{1,-1,i,-i\},\{e^i\},\{e^{(1+\tfrac{2\pi}{3})i}\},\{e^{(1+\tfrac{4\pi}{3})i}\},\{e^{2i}\}\},\\
        X/\!\sim_{\mu_{6}}&=\{\{1,-1\},\{i,-i\},\{e^i,e^{(1+\tfrac{2\pi}{3})i},e^{(1+\tfrac{4\pi}{3})i}\},\{e^{2i}\}\},\\
        X/\!\sim_{\mu_{12}}&=\{\{1,-1,i,-i\},\{e^i,e^{(1+\tfrac{2\pi}{3})i},e^{(1+\tfrac{4\pi}{3})i}\},\{e^{2i}\}\}.
\end{align*}
    Especially, 
    \[
      X/\!\sim_{\mu_{\lambda}}=\begin{cases*}
                    X/\!\sim_{\mu_{1}} & $\GCD(\lambda,12)=1$\\
                    X/\!\sim_{\mu_{2}} & $\GCD(\lambda,12)=2$\\
                    X/\!\sim_{\mu_{3}} & $\GCD(\lambda,12)=3$\\
                    X/\!\sim_{\mu_{4}} & $\GCD(\lambda,12)=4$\\
                    X/\!\sim_{\mu_{6}} & $\GCD(\lambda,12)=6$\\
                    X/\!\sim_{\mu_{12}} & $\GCD(\lambda,12)=12,\lambda=\infty$
        \end{cases*}.
    \]
\end{ex}

\begin{rem}
Let $X\subset\Sd^1$ and let $X^\lambda=\{y_1,\cdots,y_{m_\lambda}\}$.
Since $xy^{-1}\in\mu_\lambda$ if and only if $x^\lambda=y^\lambda$, we can rephrase $X/\!\sim_{\mu_{\lambda}}$ as a partition of $X$ as follows:
  \[
    X_{j,\lambda}\coloneq\{\xi\in X\mid \xi^\lambda=y_j\}.
\]
  Additionally, for some $\lambda\in\NN$, $X/\!\sim_{\mu_{\lambda}}=\{X_{j,\lambda}\}_{j=1}^{m_\lambda}$ is a finer partition than or equal to $X/\!\sim_{\mu_{\infty}}=\{X_{j}\}_{j=1}^{m}$; 
  that is, $m\leq m_\lambda$, and $X_{j}$ is a union of one or more members of $X/\!\sim_{\mu_{\lambda}}$.
This is because if $x\sim_{\mu_\lambda}y$, then $x\sim_{\mu_\infty}y$.
\end{rem}

\begin{dfn}[Polygon subset]
  Let $X$ be a finite subset of $\mathbb{T}$ with $|X|\geq2$.
We say $X$ is a polygon subset if $x\sim_{\mu_{\infty}} y$ holds for any $x,y\in X$.
Additionally, we define the period of a polygon subset $X$ as
  \[
    \lambda=|\langle\{xy^{-1}\mid x,y\in X\}\rangle|.
\]
\end{dfn}

\begin{rem}
\(X\) is a polygon subset whose period divides \(\lambda\) if and only if \(|X/\sim_{\mu_\lambda}|=1\).
\end{rem}

\begin{rem}
  Let $X\subset\Sd^1$ and suppose that $X/\!\sim_{\mu_{\lambda}}=\{X_{j,\lambda}\}_{j=1}^{m_\lambda}$.
  If $|X_{j,\lambda}|\geq2$ for all $j$, then each $X_{j,\lambda}$ is a polygon subset whose period divides $\lambda$.
\end{rem}
We also define the disjoint union of polygon subsets.

\begin{dfn}[Cyclotomic set]
  Let $X$ be a finite subset of $\mathbb{T}$, and let
  \[X/\!\sim_{\mu_{\infty}}=\{X_j\}_{j=1}^m\]
  denote the partition of $X$ induced by the equivalence relation $\sim_{\mu_{\infty}}$.
Then, $X$ is called a cyclotomic set if $|X_j|\geq2$ holds for all $j$.
We define 
  \[
    \lambda=\left|\left\langle\bigcup_{j=1}^m\{xy^{-1}\mid x,y\in X_j\}\right\rangle\right|=\LCM\{\lambda_j\}_{j=1}^m
  \] 
  as the period of the cyclotomic set, where $\lambda_j$ is the period of $X_j$.
\end{dfn}

\begin{ex}\label{ex:cyc}
        Let 
        \begin{align*}
            X&=\{1,-1,i,-i,e^i,e^{(1+\tfrac{2\pi}{3})i},e^{(1+\tfrac{4\pi}{3})i},e^{2i}\}, \\
            Y&=\{1,i,-i,e^i,e^{(1+\tfrac{2\pi}{3})i},e^{(1+\tfrac{4\pi}{3})i}\},\\
            Z&=\{1,i,e^i,e^{(1+\tfrac{2\pi}{3})i},e^{(1+\tfrac{4\pi}{3})i}\}.
\end{align*}
    Then, 
    \begin{align*}
        X/\!\sim_{\mu_{\infty}}&=\{\{1,-1,i,-i\},\{e^i,e^{(1+\tfrac{2\pi}{3})i},e^{(1+\tfrac{4\pi}{3})i}\},\{e^{2i}\}\}=\{X_1,X_2,X_3\}, \\
        Y/\!\sim_{\mu_{\infty}}&=\{\{1,i,-i\},\{e^i,e^{(1+\tfrac{2\pi}{3})i},e^{(1+\tfrac{4\pi}{3})i}\}\}=\{Y_1,Y_2\},\\
        Z/\!\sim_{\mu_{\infty}}&=\{\{1,i\},\{e^i,e^{(1+\tfrac{2\pi}{3})i},e^{(1+\tfrac{4\pi}{3})i}\}\}=\{Z_1,Z_2\}.
\end{align*}
    Since $|X_3|=1$, $X$ is not a cyclotomic set.
On the other hand, since $|Y_1|=3$ and $|Y_2|=3$, $Y$ is a cyclotomic set.
Additionally, $Y_1$ is a polygon subset with period $4$, and $Y_2$ is a polygon subset with period $3$.
Therefore, the period of $Y$ is $12$.
    Similarly, $Z$ is a cyclotomic set with period $12$.
\end{ex}


In a previous work~(\cite{H1982}), Hong defined group-type $t$-designs, which form an example of infinite $T$-designs.
\begin{dfn}[Group-type $t$-design~\cite{H1982}]
  Let $X$ be a cyclotomic set with a partition $X/\!\sim_{\mu_{\infty}}=\{X_j\}_{j=1}^m$, and let $\lambda_j$ be the period of $X_j$.
An $X\subset S^1$ is called a group-type $t$-design if $|X_j|=\lambda_j$ and $\lambda_j\geq t+1$ holds for all $j$.
\end{dfn}

Then, we generalize group-type $t$-designs as follows:
\begin{dfn}[Cyclotomic design]
  Let $X$ be a cyclotomic set with partition $X/\!\sim_{\mu_{\infty}}=\{X_j\}_{j=1}^m$ and period $\lambda$.
Then, $X$ is called a cyclotomic design if there exists a $t \in \{1,\ldots,\lambda-1\}$ such that $t \in \Hst(X_j)$ for all $j$.
We call this $t$ a common index of the cyclotomic design.
\end{dfn}

\begin{ex}
    Let $Y$ and $Z$ be cyclotomic sets with period $12$ as in \Cref{ex:cyc}, and let $Y/\!\sim_{\mu_{\infty}}=\{Y_1,Y_2\}$ and $Z/\!\sim_{\mu_{\infty}}=\{Z_1,Z_2\}$.
Then, since $\Hst(Y_1)=\emptyset$, $Y$ is not a cyclotomic design.
    On the other hand, since $\Hst(Z_1)=2\NN\setminus4\NN$ and $\Hst(Z_2)=\NN\setminus3\NN$, 
    \[
        \{2,10\}\subset\Hst(Z_1)\cap\Hst(Z_2).
\]
    Therefore, $Z$ is a cyclotomic design and its common indices are $\{2,10\}$.
Note that since $Z$ is not a group-type $t$-design, cyclotomic designs generalize group-type designs.
\end{ex}



The following property about cyclotomic sets is directly derived from the definition.
\begin{thm}\label{thm:triC}
  Let $X\subset\Sd^1$ and assume that $X/\!\sim_{\mu_{\lambda_0}}=\{X_{j,\lambda_0}\}_{j=1}^{m_{\lambda_0}}$, \textcolor{black}{where $\lambda_0$ is any natural number.}
  If $|X_{j,\lambda_0}|\geq2$ satisfies this property for all $j$, then $X$ is a cyclotomic set.
\end{thm}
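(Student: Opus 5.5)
The proof only needs to compare the two partitions $X/\!\sim_{\mu_{\lambda_0}}$ and $X/\!\sim_{\mu_\infty}$. Write $X/\!\sim_{\mu_\infty}=\{X_j\}_{j=1}^m$. By definition, $X$ is a cyclotomic set exactly when $|X_j|\geq 2$ for every $j$, so this is what we must establish.

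The key step is the refinement remark made earlier: since $\mu_{\lambda_0}\subset\mu_\infty$, any two points with $x\sim_{\mu_{\lambda_0}}y$ also satisfy $x\sim_{\mu_\infty}y$. Hence every class $X_{i,\lambda_0}$ of the $\mu_{\lambda_0}$-partition lies inside a single class of the $\mu_\infty$-partition, and each $X_j$ is a disjoint union of one or more of the sets $X_{i,\lambda_0}$.

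Now fix $j$. Because $X_j$ is nonempty, it contains at least one class $X_{i,\lambda_0}$. By hypothesis $|X_{i,\lambda_0}|\geq 2$, so
\[
  |X_j|\;\geq\;|X_{i,\lambda_0}|\;\geq\;2 .
\]
This holds for every $j$, so $X$ is a cyclotomic set. If needed we can add more: each $X_j$ is then a polygon subset, and since every $X_{i,\lambda_0}$ is a polygon subset whose period divides $\lambda_0$, the period of $X$ is at least $2$. The statement itself, however, requires only the cardinality condition.

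The argument is essentially definitional, and no step is a real obstacle. The one point to state carefully is the containment of $\mu_{\lambda_0}$-classes inside $\mu_\infty$-classes, which follows immediately from $\mu_{\lambda_0}\subset\mu_\infty$.
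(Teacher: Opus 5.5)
Your proposal is correct and follows essentially the same route as the paper: both use that $x\sim_{\mu_{\lambda_0}}y$ implies $x\sim_{\mu_\infty}y$, so every $\mu_\infty$-class contains a $\mu_{\lambda_0}$-class with at least two points. The paper phrases this pointwise (each $x$ has some $y\neq x$ in its class), while you phrase it via refinement of partitions. Your closing remark about the period is not needed for the statement.
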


\begin{proof}
  Since $\{X_{j,\lambda_0}\}_{j=1}^{m_{\lambda_0}}$ satisfies this property, for all $x\in X$, there exists at least one element $y\in X\setminus\{x\}$ such that $x\sim_{\mu_{\lambda_0}}y$.
From the definition, if $x\sim_{\mu_{\lambda_0}}y$, then $x\sim_{\mu_{\infty}}y$.
  This implies that every member of $X/\!\sim_{\mu_{\infty}}$ contains at least two elements.
Therefore, $X$ is a cyclotomic set.
\end{proof}

\begin{rem}
  Let $X$ be an antipodal set, that is, $X=-X$ holds.
Then, since $\mu_2=\{-1,1\}$, all members of $X/\!\sim_{\mu_{2}}$ have $2$ elements.
  Therefore, from \Cref{thm:triC}, $X$ is a cyclotomic set.
\end{rem}

\subsection{The harmonic strength of cyclotomic set}\label{ssec:hst}

In this subsection, we study the harmonic strength of cyclotomic sets, which is later used in the proof of \Cref{thm:infmain}. 
Especially, we prove the following theorem.

  \begin{thm}\label{thm:pol}
    Let $X$ be a cyclotomic set with period $\lambda$ and $X/\!\sim_{\mu_{\infty}}=\{X_j\}_{j=1}^m$.
If $t\in\bigcap_{j=1}^m\Hst(X_j)$, then
    \[
      \{t + k\lambda \mid k\in\mathbb{Z}\}\cap\mathbb{N} \subset \Hst(X).
\]
    In particular, a cyclotomic design is always an infinite strength spherical design.
\end{thm}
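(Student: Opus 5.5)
The plan is to reduce everything to the complex moments $P_k$ via \Cref{lem:Hong} and exploit the fact that each class $X_j$ is contained in a single coset of $\mu_{\lambda}$. Fix $j$ and pick any $x_j\in X_j$. For every $x\in X_j$ the quotient $x x_j^{-1}$ lies in the subgroup generated by $\{xy^{-1}\mid x,y\in X_j\}$. That subgroup is a finite subgroup of $\mathbb{T}$ of order $\lambda_j$, so it equals $\mu_{\lambda_j}\subset\mu_\lambda$, since $\lambda_j\mid\lambda=\LCM\{\lambda_i\}$. Hence every $x\in X_j$ can be written $x=x_j\omega$ with $\omega^\lambda=1$, and therefore $x^{\lambda}=x_j^{\lambda}$ for all $x\in X_j$.

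Next, take $s=t+k\lambda\in\mathbb{N}$ with $k\in\mathbb{Z}$. For $x\in X_j$ we have $x^{s}=x^{t}(x^{\lambda})^{k}=x^{t}x_j^{k\lambda}$. Negative $k$ is harmless because $|x|=1$, so $x$ is invertible and $x^{-1}=\bar x$. Summing over $X_j$ gives
\[
P_s(X_j)=x_j^{k\lambda}P_t(X_j)=0,
\]
where the vanishing uses $t\in\Hst(X_j)$ together with \Cref{lem:Hong} applied to $X_j$. Since $X$ is the disjoint union of the $X_j$, we get $P_s(X)=\sum_{j=1}^m P_s(X_j)=0$. A second application of \Cref{lem:Hong} then gives $s\in\Hst(X)$, which proves the inclusion.

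For the ``in particular'' part, let $X$ be a cyclotomic design with common index $t\in\{1,\ldots,\lambda-1\}$. Then $t\in\bigcap_j\Hst(X_j)$, so $\Hst(X)$ contains the infinite set $\{t+k\lambda\mid k\ge0\}$. Thus $X$ is an infinite strength spherical design.

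The only step that needs care is the first one: identifying the group $\langle\{xy^{-1}\}\rangle$ with $\mu_{\lambda_j}$. This uses only that every finite subgroup of $\mathbb{T}$ is cyclic of the form $\mu_n$; such a subgroup is finite here because it is generated by finitely many roots of unity. The rest is a direct computation.
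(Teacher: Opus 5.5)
Your proof is correct, but it takes a different and more elementary route than the paper.

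The paper's route is algebraic. It writes each class as a polynomial set $X_j=X(f_j,\zeta_\lambda,\alpha_j)$ via \Cref{lem:polygons}. It then invokes \Cref{lem:pol0}, and through it \Cref{lem:ps} and the factorization of $x^\lambda-1$ into cyclotomic polynomials, to identify $\bigcap_j\Hst(X_j)$ with $\{t\in\NN\mid\GCD(t,\lambda)\in T_\lambda\}$. Only then does it use $\GCD(t+k\lambda,\lambda)=\GCD(t,\lambda)$.

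You instead observe that all points of $X_j$ share the same $\lambda$-th power, and compute $P_{t+k\lambda}(X_j)=x_j^{k\lambda}P_t(X_j)$ directly. This is essentially the moment identity the paper itself uses later, in the proof of \Cref{lem:hst}.

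Your argument is self-contained. It needs no choice of $0$-$1$ polynomial representatives, so no appeal to \Cref{prop:indpendency}. It also works verbatim for any partition into $\sim_{\mu_\lambda}$-classes.

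The paper's route buys a stronger structural conclusion. The GCD description shows that $\bigcap_j\Hst(X_j)$ is invariant not only under translation by $\lambda$. It is also invariant under replacing $t$ by any $t'$ with $\GCD(t',\lambda)=\GCD(t,\lambda)$. This is a Galois-conjugation effect, since the $f_j$ have rational coefficients. That finer information is what feeds into \Cref{thm:gcd} and the weak GCD property in \Cref{thm:hst}. For \Cref{thm:pol} alone, your translation-invariance argument suffices.
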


To prove \Cref{thm:pol}, we introduce a generalization of polygon subsets, termed a polynomial set, whose harmonic strength is determined by the zeros of a corresponding polynomial.
\begin{dfn}[Polynomial set]
  Let 
  \[
    f=\sum_{j=0}^d a_jx^j
  \] 
  be a polynomial with indeterminate $x$ and degree $d$, where $a_j\in\{0,1\}$ for all $j\in\{0,\ldots,d\}$.
  Additionally, let $\xi$ and $\alpha$ be any elements of $\mathbb{T}$.
  Then, a polynomial set of $f$, $\xi$, and rotation angle $\alpha$, denoted by $X(f,\xi,\alpha)$, is a subset of $\Sd^1$ defined as follows:
  \[
    X(f,\xi,\alpha)\coloneqq\{\alpha \xi^j \mid a_j=1\}.
\]
\end{dfn}

\begin{rem}
  Let $X=X(f,\xi,\alpha)$ be a polynomial set.
  Then, $f(1)$ represents the number of indices $j\in\{1,\ldots,m\}$ for which $a_j=1$.
Therefore, $|X|=f(1)$ holds if and only if $\xi^{j_1}\neq \xi^{j_2}$ for any pair $j_1$ and $j_2$ satisfying $0\leq j_1<j_2\leq \mathrm{deg}(f)$ and $a_{j_1}=a_{j_2}=1$.
Additionally, if $|X|=f(1)$, then $P_k(X)=\alpha^kf(\xi^{k})$ holds.
  By \Cref{lem:Hong}, this implies that in this case,  $t\in\Hst(X)$ if and only if $\xi^{t}$ is a root of $f$.
  For example, when $f(x)=x^4+x^3+x^2+x+1$ and $\theta=\frac{2\pi}{5}$, then $\xi^{t}=\zeta_5^t$ is a root of $f$ when $t\in \NN\setminus5\NN$.
Therefore, $\Hst(X(f,\zeta_5,1))=\NN\setminus5\NN$.
\end{rem}
  
It is easily shown that polygon subsets are always polynomial sets.
\begin{lem}\label{lem:polygons}
  Let $X$ be a polygon subset whose period divides $\lambda$.
Then, there exists a polynomial $f$ and $\alpha\in\mathbb{T}$ such that $f$ and $\alpha$ satisfy the following three conditions:
  \begin{itemize}
      \item[$\cdot$] The degree of $f$ is strictly less than $\lambda$ and strictly greater than $0$.
      \item[$\cdot$] All coefficients of $f$ are $0$ or $1$.
      \item[$\cdot$] $X=X(f,\zeta_\lambda,\alpha)$.
  \end{itemize}
\end{lem}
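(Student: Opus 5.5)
Fix a base point $\alpha\in X$. Every other element is a root-of-unity multiple of $\alpha$, so we can write each $x\in X$ as $x=\alpha\zeta_\lambda^{e(x)}$ and read off a $0$-$1$ polynomial from the exponents $e(x)$. This gives the polynomial $f$ directly.

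First we justify the exponents. Since $X$ is a polygon subset, $x\alpha^{-1}\in\mu_\infty$ for every $x\in X$. Let $G=\langle\{xy^{-1}\mid x,y\in X\}\rangle$. Then $G$ is a finite subgroup of $\mathbb{T}$ of order equal to the period of $X$, and this order divides $\lambda$ by hypothesis. Every finite subgroup of $\mathbb{T}$ is cyclic, so $G=\mu_{|G|}\subset\mu_\lambda=\langle\zeta_\lambda\rangle$. Hence for each $x\in X$ there is a unique $e(x)\in\{0,1,\ldots,\lambda-1\}$ with $x\alpha^{-1}=\zeta_\lambda^{e(x)}$, and $e(\alpha)=0$.

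Next we build $f$. Define
\[
f(x)=\sum_{y\in X}x^{e(y)}.
\]
The map $y\mapsto e(y)$ is injective, because $\zeta_\lambda$ has order exactly $\lambda$ and the exponents lie in $\{0,\ldots,\lambda-1\}$. So distinct points give distinct monomials and every coefficient of $f$ is $0$ or $1$. By construction, $X(f,\zeta_\lambda,\alpha)=\{\alpha\zeta_\lambda^{j}\mid a_j=1\}=X$.

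Finally we check the degree. Every exponent is at most $\lambda-1$, so $\deg f<\lambda$. For the lower bound, $|X|\ge 2$ by the definition of a polygon subset. So some $y\neq\alpha$ exists, and it has $e(y)\ge1$, giving $\deg f>0$.

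The one point that needs care is the cyclicity of $G$: the containment $G\subset\mu_\lambda$ holds because $|G|$ divides $\lambda$ and finite subgroups of $\mathbb{T}$ are exactly the $\mu_n$. Everything else is bookkeeping.
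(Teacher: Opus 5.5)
Your proof is correct and follows the same route as the paper: fix a point $\alpha\in X$, observe that $\alpha^{-1}X\subset\langle\zeta_\lambda\rangle$, and let $f$ be the $0$-$1$ polynomial supported on the resulting exponents in $\{0,\ldots,\lambda-1\}$. You also make explicit two points the paper leaves implicit: why $\alpha^{-1}X\subset\mu_\lambda$ (finite subgroups of $\mathbb{T}$ are exactly the groups $\mu_n$), and why $\deg f>0$ (because $|X|\ge 2$ and the exponent map is injective).
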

\begin{proof}
    Since $X$ is a polygon subset whose period divides $\lambda$, for any element $\xi\in X$, 
  \[
    \xi^{-1}X\coloneqq\{\xi^{-1}x\mid x\in X\}\subset\langle\zeta_\lambda\rangle<\mathbb{T}.
\]
  Therefore, there exists a subset $T\subset\{0,1,\ldots,\lambda-1\}$ such that
  \[
    \xi^{-1}X=\{\zeta_\lambda^t\mid t\in T\}.
\]
  Then, $X=X(f,\zeta_\lambda,\xi)$ holds, where
  \[
    f=\sum_{j=0}^{\lambda-1}a_jx^j
  \]
  and $\{a_j\}_{j=0}^{\lambda-1}$ is a sequence taking values $1$ if $j\in T$, and $0$ otherwise.
\end{proof}

  Note that such a polynomial is not uniquely determined.
  For example, let $X=\{e^{i},e^{i+\frac{\pi}{2}}\}$, $f_1=1+x$, and $f_2=1+x^3$.
Then, $X=X(f_1, \zeta_4, e^i)=X(f_2, \zeta_4, e^{i+\frac{\pi}{2}})$.
  \textcolor{black}{
  From this property, we can determine the harmonic strength of a polygon subset using the greatest common divisor of the corresponding polynomial $f$ and the cyclotomic polynomial.}
Hereafter, we denote the $k$-th cyclotomic polynomial by $\Phi_k$, and for any polynomial $f\in\mathbb{Q}[x]$, we denote the set of zeros of $f$ by $Z(f)$.
\begin{lem}\label{lem:ps}
  Let $\lambda$ be a natural number, $f$ be a polynomial whose coefficients are all $0$ or $1$ with degree strictly less than $\lambda$ and let $\alpha\in\mathbb{T}$. 
  Additionally, let
\[
    \GCD(f,x^{\lambda}-1)=\prod_{t\in T_\lambda}\Phi_{\tfrac{\lambda}{t}},
\]
where if $\GCD(f,x^{\lambda}-1)=1$, then we define $T_\lambda=\emptyset$ and the right-hand side as $1$.
If $f(1)=|X(f,\zeta_\lambda,\alpha)|$, then
\[
  \Hst(X(f,\zeta_\lambda,\alpha)) = \{ t \in \NN \mid \GCD(t, \lambda) \in T_\lambda\}.
\]
\end{lem}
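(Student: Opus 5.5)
Set $X=X(f,\zeta_\lambda,\alpha)$. Because $f(1)=|X|$, the remark after the definition of polynomial sets together with \Cref{lem:Hong} gives $P_t(X)=\alpha^t f(\zeta_\lambda^t)$ for every $t\in\NN$. Since $|\alpha|=1$, we get
\[
t\in\Hst(X)\iff f(\zeta_\lambda^t)=0 .
\]
So the task is to decide, for each $t\in\NN$, whether $\zeta_\lambda^t$ is a root of $f$. The answer should depend only on $\GCD(t,\lambda)$ and on the factorization of $\GCD(f,x^\lambda-1)$ into cyclotomic factors.

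The first step is to identify the order of $\zeta_\lambda^t$. Let $g=\GCD(t,\lambda)$. Then $\zeta_\lambda^t$ is a primitive $(\lambda/g)$-th root of unity, so it is a root of $\Phi_{\lambda/g}$. Conversely, every root of $x^\lambda-1$ has the form $\zeta_\lambda^t$ for some $t$. Recall
\[
x^\lambda-1=\prod_{e\mid\lambda}\Phi_e ,
\]
where the factors are distinct, irreducible over $\mathbb{Q}$, and pairwise coprime. Since $x^\lambda-1$ is squarefree, $\GCD(f,x^\lambda-1)$ is a product of some of these factors. Writing each such factor as $\Phi_{\lambda/t'}$ with $t'\mid\lambda$ defines $T_\lambda$ consistently, since $t'\mapsto\lambda/t'$ is a bijection on the divisors of $\lambda$.

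The key step is that $f$ has rational coefficients and $\Phi_{\lambda/g}$ is irreducible over $\mathbb{Q}$. Hence $f(\zeta_\lambda^t)=0$ if and only if $\Phi_{\lambda/g}\mid f$. Indeed, the minimal polynomial of $\zeta_\lambda^t$ over $\mathbb{Q}$ is $\Phi_{\lambda/g}$, so it divides any rational polynomial vanishing at $\zeta_\lambda^t$. Because $\Phi_{\lambda/g}$ also divides $x^\lambda-1$, this is equivalent to $\Phi_{\lambda/g}\mid\GCD(f,x^\lambda-1)$, and by coprimality of distinct cyclotomic polynomials it is equivalent to $g\in T_\lambda$. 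Chaining these equivalences gives
\[
\Hst(X)=\{t\in\NN\mid \GCD(t,\lambda)\in T_\lambda\}.
\]

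No real obstacle is expected; the whole argument rests on the irreducibility of cyclotomic polynomials over $\mathbb{Q}$, which is standard. The one point needing care is the edge case $\GCD(f,x^\lambda-1)=1$. There $T_\lambda=\emptyset$, and no $\zeta_\lambda^t$ is a root of $f$, so $\Hst(X)=\emptyset$, matching the formula. The hypothesis $f(1)=|X|$ is used only to guarantee that distinct exponents give distinct points, so that $P_t(X)$ equals $\alpha^t f(\zeta_\lambda^t)$ rather than a sum with repeated points collapsed.
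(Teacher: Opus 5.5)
Your proof is correct and follows the paper's argument: you reduce, via $P_t(X)=\alpha^t f(\zeta_\lambda^t)$ and \Cref{lem:Hong}, to the question of whether $\zeta_\lambda^t$ is a root of $f$, and then read the answer off the cyclotomic factorization of $\GCD(f,x^\lambda-1)$. The only cosmetic difference is in the last step: the paper argues with zero sets, using $Z(\GCD(f,x^\lambda-1))=Z(f)\cap Z(x^\lambda-1)$ and $Z(\Phi_{\lambda/t})=\{\zeta_\lambda^k\mid \GCD(k,\lambda)=t\}$, whereas you use that $\Phi_{\lambda/g}$ is the minimal polynomial of $\zeta_\lambda^t$.
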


\begin{proof}
We define  
  \[
    M\coloneq Z(f)\cap Z(x^{\lambda}-1)=Z(\GCD(f,x^{\lambda}-1)).
\]
  Since $P_k(X(f,\zeta_\lambda,\alpha))=\alpha^kf(\zeta_\lambda^k)$, $P_t(X)=0$ if and only if $\zeta_\lambda^t$ is a zero of $f$.
Therefore, from \Cref{lem:Hong},
  \begin{equation}
    \Hst(X)=\{t\in \mathbb{N}\mid \zeta_\lambda^t\in M\}.\label{eq:hst}
  \end{equation}
  It is known that the zeros of $\Phi_{\tfrac{\lambda}{t}}$ are as follows:
  \[
    Z(\Phi_{\tfrac{\lambda}{t}})=\{\zeta_\lambda^k\mid \GCD(k,\lambda)=t\}.
\]
  Therefore, 
  \begin{align*}
    M&=\bigcup_{t\in T_\lambda}Z(\Phi_{\tfrac{\lambda}{t}})\\
    &=\bigcup_{t\in T_\lambda}\{\zeta_\lambda^k\mid \GCD(k,\lambda)=t\}\\
    &=\{\zeta_\lambda^k\mid \GCD(k,\lambda)\in T_\lambda\}.
\end{align*}
  From Equation~\ref{eq:hst}, 
  \[
    \Hst(X) = \{ t \in \NN \mid \GCD(t, \lambda) \in T_\lambda\}.
\]
\end{proof}

\begin{ex}
Let $f=x^4+x^3+x^2+x+1=\Phi_5$.
Assume the polygon subset is $X(f,\zeta_5,\alpha)$.
Then, 
\[
  \GCD(f,x^5-1)=\prod_{t\in T_\lambda}\Phi_{\tfrac{5}{t}},
\]
where $T_\lambda=\{1\}$.
Therefore, from Lemma~\ref{lem:ps},
\[
  \Hst(X(f,\zeta_5,\alpha))=\{t\in\NN\mid \GCD(t,5)=1\}=\NN\setminus5\NN.
\]
On the other hand, consider the polygon subset $X(f,\zeta_6,\alpha)$.
Then, 
\[
  \GCD(f,x^6-1)=1=\prod_{t\in T_\lambda}\Phi_{\tfrac{6}{t}},
\]
where $T_\lambda=\emptyset$, and we obtain 
\[
  \Hst(X(f,\zeta_6,\alpha))=\emptyset.
\]
\end{ex}

\begin{rem}\label{rem:1}
  Let $X$ be a polygon subset with period $\lambda$.
  From \Cref{lem:polygons}, there exist a polynomial $f$ and rotation angle $\alpha\in\mathbb{T}$ such that $X=X(f,\zeta_\lambda,\alpha)$.
  In this case, since $f(1)\neq0$ holds, 
  \[
    \Phi_1=x-1\nmid\GCD(f,x^{\lambda}-1).
\]
  Therefore, for any polygon subset with period $\lambda$, 
  \[
    \{k\lambda\mid k\in\mathbb{N}\}\cap\Hst(X)=\emptyset.
\]
\end{rem}

\begin{rem}\label{rem:periodcollapse}
  From \Cref{lem:ps}, the harmonic strength of a polygon subset $X$ has the GCD property.
Therefore, if $\Hst(X)\neq\emptyset$, then there exists a period of $\Hst(X)$.
Note that the period of $\Hst(X)$ is sometimes different from the period of the polygon subset $X$.
For example, let $X=\{1,e^{\frac{\pi i}{6}},e^{\frac{5\pi i}{6}},-1,e^{\frac{3\pi i}{2}}\}$.
  Then, $X$ is a polygon subset with period $12$, and $X=X(f,\zeta_{12},1)$, where $f=x^9+x^6+x^5+x+1$.
Since 
  \[
    \GCD(f,x^{12}-1)=x^4-x^2+1=\Phi_{12},
  \]
  using \Cref{lem:ps}, the harmonic strength of $X$ is as follows:
  \[
    \Hst(X)=\{t\in\mathbb{N}\mid\GCD(t,12)\in\{1\}\}.
\]
  However, since 
  \[
    \{t\in\mathbb{N}\mid\GCD(t,12)\in\{1\}\}=\{t\in\mathbb{N}\mid\GCD(t,6)\in\{1\}\},
  \]
  the period of $\Hst(X)$ is $6$.
\end{rem}

  Let $X$ be a polygon subset with period $\lambda$.
\Cref{lem:polygons} guarantees the existence of a polynomial $f$ and an $\alpha\in\mathbb{C}$ such that $X=X(f,\zeta_\lambda,\alpha)$.
However, since such a polynomial $f$ is not uniquely determined, it is important to consider the dependency of the choice of $f$ when using \Cref{lem:ps}.
We can show that this result is independent of the choice of $f$.
\begin{prop}\label{prop:indpendency}
    Let $X$ be a polygon subset, and assume that there exist polynomials $f_1$ and $f_2$, a natural number $\lambda$, and rotation angles $\alpha_1$ and $\alpha_2$ such that 
    $X=X(f_1,\zeta_\lambda,\alpha_1)=X(f_2,\zeta_\lambda,\alpha_2)$ and $f_1(1)=f_2(1)=|X|$.
Then, 
    \[
      \GCD(f_1,x^\lambda-1)=\GCD(f_2,x^\lambda-1).
\]
  \end{prop}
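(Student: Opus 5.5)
The plan is to show that the common zero set of $f_i$ and $x^\lambda-1$ is determined by $X$ alone. Both polynomials $\GCD(f_i,x^\lambda-1)$ divide $x^\lambda-1$, which is separable. Hence each is the monic squarefree polynomial whose roots are $Z(f_i)\cap\mu_\lambda$. It therefore suffices to prove
\[
Z(f_1)\cap\mu_\lambda = Z(f_2)\cap\mu_\lambda .
\]

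For this I will use the computation already made before \Cref{lem:ps}. Since $f_i(1)=|X|$, the exponents $j$ with nonzero coefficient give pairwise distinct points $\alpha_i\zeta_\lambda^j$. Consequently, for every $k\in\NN$,
\[
P_k(X)=\alpha_i^k f_i(\zeta_\lambda^k), \qquad i=1,2 .
\]
Because $|\alpha_i|=1$, this yields $f_1(\zeta_\lambda^k)=0 \iff P_k(X)=0 \iff f_2(\zeta_\lambda^k)=0$. This holds for all $k\in\NN$, and in particular for $k=1,\dots,\lambda$, so it covers every element of $\mu_\lambda$ (the case $k=\lambda$ handles the root $1$). Thus the two zero sets in $\mu_\lambda$ coincide. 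Therefore $\GCD(f_1,x^\lambda-1)=\prod_{\omega\in M}(x-\omega)=\GCD(f_2,x^\lambda-1)$, where $M$ is this common set; both sides are monic.

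The only point needing care is the normalization: the $\GCD$ must be monic, and it must be squarefree. Squarefreeness follows because it divides $x^\lambda-1$, which has distinct roots. With that, the gcd is determined by its root set, and no argument about the relation between $f_1$ and $f_2$ (such as multiplication by $x^s$ modulo $x^\lambda-1$) is needed.

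Alternatively, one could argue directly that $f_2\equiv c\,x^s f_1 \pmod{x^\lambda-1}$ with $\alpha_2=\alpha_1\zeta_\lambda^{-s}$, and note that $x$ is a unit modulo $x^\lambda-1$. That route requires handling degrees at least $\lambda$ and repeated exponents, which the hypothesis $f_i(1)=|X|$ already rules out. The moment argument above avoids this.
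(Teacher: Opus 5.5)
Your argument is correct, but it takes a different route from the paper's. The alternative you sketch in your last paragraph is essentially what the paper does. Writing each $v\in X$ as $\alpha_1\zeta_\lambda^{t_v}=\alpha_2\zeta_\lambda^{t'_v}$, the paper notes that $t_v-t'_v$ is constant modulo $\lambda$. It then deduces $f_1\equiv x^{s}f_2 \pmod{x^\lambda-1}$ for a suitable $s\ge 0$: $s=0$ when $\alpha_1=\alpha_2$, and the representative $s=|t_v-t'_v|\lambda+t_v-t'_v$ otherwise. It concludes from $\GCD(x^s,x^\lambda-1)=1$.

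You instead compare values. The identity $\alpha_1^k f_1(\zeta_\lambda^k)=\sum_{x\in X}x^k=\alpha_2^k f_2(\zeta_\lambda^k)$ shows that $f_1$ and $f_2$ vanish at the same points of $\mu_\lambda$. Since $x^\lambda-1$ is separable, each gcd is the monic product of $x-\omega$ over that common root set.

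What your route buys:
\begin{itemize}
\item It needs no relation between $\alpha_1$ and $\alpha_2$, no case split, and no exponent bookkeeping.
\item It makes explicit that the gcd is an invariant of $X$ itself: it is determined by which moments $P_k(X)$ with $1\le k\le\lambda$ vanish. In effect, you show that the proposition is already a consequence of the moment identity underlying \Cref{lem:ps}.
\end{itemize}

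What the paper's route buys:
\begin{itemize}
\item It yields the stronger structural fact that the two representing polynomials are associates in $\mathbb{Q}[x]/(x^\lambda-1)$; they differ by a cyclic shift of exponents.
\item It is purely algebraic, using neither roots in $\mathbb{C}$ nor the squarefreeness of $x^\lambda-1$.
\end{itemize}
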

  \begin{proof}
  Since $X=X(f_1,\zeta_\lambda,\alpha_1)=X(f_2,\zeta_\lambda,\alpha_2)$, for any point $v\in X$, there exist exponents $t_v$ and $t'_v$ such that 
  \begin{align*}
   f_1(x)&=\sum_{v\in X}x^{t_v},\\
   f_2(x)&=\sum_{v\in X}x^{t'_v}.
\end{align*}  
  For any $v\in X$, we obtain 
  \begin{equation}
    v=\alpha_1\zeta_\lambda^{t_{v}}=\alpha_2\zeta_\lambda^{t'_{v}}.\label{eq:p1}
  \end{equation}
  First, we show that if $\alpha_1=\alpha_2=\alpha$, then $f_1\equiv f_2\pmod{x^\lambda-1}$.
From \Cref{eq:p1}, there exists an integer $k_v$ such that $t_{v}-t'_{v}=k_v\lambda$, and we obtain that 
  \[
    f_2(x)= \sum_{v\in X}x^{t_v+k_v\lambda}.
\]
  This implies $f_1\equiv f_2\pmod{x^\lambda-1}$.
  Next, suppose that $\alpha_1\neq\alpha_2$.
  From \Cref{eq:p1}, we obtain \textcolor{black}{$t_v-t_v'$ are independent of the choice of $v$ and } $\alpha_2=\alpha_1\zeta_\lambda^{t_v-t'_v}$.
Therefore, 
  \[
    X=X(x^{|t_v-t_v'|\lambda+t_v-t_v'}f_2(x),\zeta_\lambda,\alpha_1).
  \]
  This implies that $f_1(x)\equiv x^{|t_v-t'_v|\lambda+t_v-t'_v}f_2(x)\pmod{x^\lambda-1}$.
In either case, since $\GCD(x^k,x^\lambda-1)=1$ for all $k\in\mathbb{N}$, $\GCD(f_1,x^{\lambda}-1)=\GCD(f_2,x^{\lambda}-1)$ holds.
\end{proof}

Then, we can establish properties regarding the harmonic strength of cyclotomic sets.

\begin{lem}\label{lem:pol0}
Let $X\subset\Sd^1$ be a finite set and suppose
\[
X/\!\sim_{\mu_\lambda}=\{X_{j,\lambda}\}_{j=1}^{m_\lambda},
\]
where $\lambda\in\mathbb{N}$.
For each $j=1,\ldots,m_\lambda$, let $(f_j,\zeta_\lambda,\alpha_j)$ be a triplet consisting of a polynomial $f_j$ whose coefficients are all $0$ or $1$ with degree strictly less than $\lambda$,
a root of unity $\zeta_\lambda$, and a rotation angle $\alpha_j$ such that
\[
X_{j,\lambda}=X(f_j,\zeta_\lambda,\alpha_j),
\]
and suppose that
\[
    \GCD(f_1,\ldots, f_{m_\lambda}, x^{\lambda}-1)
    =
    \prod_{t\in T_\lambda}\Phi_{\tfrac{\lambda}{t}},
\]
where if
\[
    \GCD(f_1,\ldots, f_{m_\lambda}, x^{\lambda}-1)=1,
\]
then we define $T_{\lambda}=\emptyset$ and the right-hand side as $1$.
Then,
\[
\bigcap_{j=1}^{m_\lambda}\Hst(X_{j,\lambda})
=
\{t\in\NN\mid\GCD(t,\lambda)\in T_\lambda\}.
\]
\end{lem}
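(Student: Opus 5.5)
The plan is to reduce to \Cref{lem:ps} applied to each class separately, and then intersect. For each $j$, the set $X_{j,\lambda}=X(f_j,\zeta_\lambda,\alpha_j)$ consists of distinct points $\alpha_j\zeta_\lambda^{s}$ with $s$ ranging over exponents in $\{0,\ldots,\lambda-1\}$. Since $\deg f_j<\lambda$, distinct exponents give distinct points, so $f_j(1)=|X_{j,\lambda}|$. Hence $P_k(X_{j,\lambda})=\alpha_j^k f_j(\zeta_\lambda^k)$.

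By \Cref{lem:Hong}, $t\in\Hst(X_{j,\lambda})$ if and only if $f_j(\zeta_\lambda^t)=0$. This holds even when $|X_{j,\lambda}|=1$: there $f_j$ is a monomial, it has no roots on $\mathbb T$, and $\Hst$ is empty.

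Therefore
\[
\bigcap_{j=1}^{m_\lambda}\Hst(X_{j,\lambda})=\{t\in\NN\mid \zeta_\lambda^t\in Z(f_1)\cap\cdots\cap Z(f_{m_\lambda})\}.
\]
Since $\zeta_\lambda^t$ is automatically a zero of $x^\lambda-1$, this equals
\[
\{t\in\NN \mid \zeta_\lambda^t\in Z(f_1)\cap\cdots\cap Z(f_{m_\lambda})\cap Z(x^\lambda-1)\}.
\]

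The key algebraic step is to identify this common zero set on $\mu_\lambda$ with $Z(\GCD(f_1,\ldots,f_{m_\lambda},x^\lambda-1))$. Over $\CC$, a common root of finitely many polynomials is exactly a root of their gcd. This holds because the gcd is a $\CC[x]$-linear combination of the polynomials (Bezout), so it vanishes at any common root. Conversely, the gcd divides each polynomial, so every root of the gcd is a common root. The gcd computed in $\mathbb Q[x]$ agrees with the one in $\CC[x]$, and since $x^\lambda-1$ is squarefree, the gcd is a product of distinct cyclotomic factors $\Phi_{\lambda/t}$, $t\in T_\lambda$, as stated.

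Next, exactly as in the proof of \Cref{lem:ps}, use $Z(\Phi_{\lambda/t})=\{\zeta_\lambda^k\mid \GCD(k,\lambda)=t\}$. This gives that the common zero set equals $\{\zeta_\lambda^k\mid \GCD(k,\lambda)\in T_\lambda\}$. Since $\zeta_\lambda^t=\zeta_\lambda^k$ forces $t\equiv k\pmod\lambda$, and hence $\GCD(t,\lambda)=\GCD(k,\lambda)$, we conclude that
\[
\bigcap_{j}\Hst(X_{j,\lambda})=\{t\in\NN\mid\GCD(t,\lambda)\in T_\lambda\}.
\]
The empty case $T_\lambda=\emptyset$ is consistent, since then there are no common roots.

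The main point needing care is the well-definedness and non-degeneracy of $f_j(1)=|X_{j,\lambda}|$, which guarantees that the moment formula holds with no cancellation from coinciding points. This is exactly what $\deg f_j<\lambda$ together with the $0$-$1$ coefficients ensures. Independence from the choice of $f_j$ follows from \Cref{prop:indpendency} but is not strictly needed, because the argument works for any given triplets.
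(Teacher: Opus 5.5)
Your proof is correct and follows essentially the paper's argument: the paper also uses $f_j(1)=|X_{j,\lambda}|$ and \Cref{lem:ps} class by class, then passes to the intersection via $T_\lambda=\bigcap_j T_{\lambda,j}$, which is the same fact as your identification of the common zeros on $\mu_\lambda$ with the zeros of the joint gcd. The only difference is cosmetic: the paper treats a singleton class as a separate case, whereas you handle it uniformly because a monomial has no zeros on $\mathbb{T}$.
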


\begin{proof}
First, we assume that there exists some $j_0\in\{1,\ldots,m_{\lambda}\}$ such that
$|X_{j_0,\lambda}|=1$.
Then $f_{j_0}$ is a monomial, and hence
\[
\GCD(f_{j_0},x^\lambda-1)=1.
\]
Therefore,
\[
\GCD(f_1,\ldots, f_{m_\lambda}, x^{\lambda}-1)=1.
\]
Thus $T_\lambda=\emptyset$. Additionally, one point cannot be a spherical $t$-design for any
$t\in\NN$. This implies
\[
\Hst(X_{j_0,\lambda})=\emptyset.
\]
Therefore, in this case,
\[
    \bigcap_{j=1}^{m_\lambda}\Hst(X_{j,\lambda})
    =
    \{t\in\NN\mid\GCD(t,\lambda)\in T_\lambda\}
    =
    \emptyset.
\]

Next, we assume that $|X_{j,\lambda}|\geq2$ holds for all $j=1,\ldots,m_\lambda$.
Let
\[
    \GCD(f_j,x^{\lambda}-1)
    =
    \prod_{t\in T_{\lambda,j}}\Phi_{\tfrac{\lambda}{t}}
    \qquad (j=1,\ldots,m_\lambda).
\]
Then, since the degree of $f_j$ is strictly less than $\lambda$, we have
\[
f_j(1)=|X_{j,\lambda}|.
\]
Therefore, from \Cref{lem:ps},
\[
    \Hst(X_{j,\lambda})
    =
    \{t\in\mathbb{N}\mid \GCD(t,\lambda)\in T_{\lambda,j}\}.
\]
By the definition of $T_\lambda$, we have
\[
T_\lambda=\bigcap_{j=1}^{m_\lambda}T_{\lambda,j}.
\]
Therefore,
\[
    \bigcap_{j=1}^{m_\lambda}\Hst(X_{j,\lambda})
    =
    \{t\in\mathbb{N}\mid \GCD(t,\lambda)\in T_{\lambda}\}.
\]
\end{proof}

\begin{rem}
  Since $f_j(1)\neq0$ for all $j$, $\lambda$ is always not in $T_\lambda$.
\end{rem}

\begin{rem}
  In \Cref{lem:pol0}, although the set of polynomials $\{f_j\}_{j=1}^m$ is not uniquely determined, \Cref{prop:indpendency} ensures that $T_{\lambda,j}$ and $T_\lambda$ are uniquely determined.
\end{rem}

\begin{ex}\label{ex:1}
  Let $X=\{1,e^{\frac{2\pi i}{3}},e^{\frac{4\pi i}{3}},e^i,-e^{i}\}$.
  Then, $X$ is a cyclotomic set with period $6$, and $X/\!\sim_{\mu_{\infty}}=\{\{1,e^{\frac{2\pi i}{3}},e^{\frac{4\pi i}{3}}\},\{e^i,-e^{i}\}\}$.
Let $f_1=x^4+x^2+1$ and $f_2=x^3+1$.
  Then, 
  \[X/\!\sim_{\mu_{\infty}}=\{X(f_1,\zeta_6,1),X(f_2,\zeta_2,e^i)\}=\{X_1,X_2\}.\]
  Since 
  \[
    \GCD(f_1,x^{6}-1)=\prod_{j\in\{2,6\}}\Phi_{\frac{6}{j}}
  \]
  and 
  \[
    \GCD(f_2,x^{6}-1)=\prod_{j\in\{3,6\}}\Phi_{\frac{6}{j}},
  \]
  using \Cref{lem:pol0}, the harmonic strength of $\Hst(X_1)\cap\Hst(X_2)$ is as follows:
  \[
    \Hst(X_1)\cap\Hst(X_2)=\{t\in\mathbb{N}\mid\GCD(t,6)\in\{1\}\}.
\]
\end{ex}

Then, we prove \Cref{thm:pol} using \Cref{lem:pol0}.

  \begin{proof}[Proof of \Cref{thm:pol}]
    Since the period of $X$ is $\lambda$, $X/\!\sim_{\mu_\infty}=X/\!\sim_{\mu_\lambda}$.
    From \Cref{lem:polygons}, for each $j\in\{1,\ldots,m\}$, there exists a polynomial $f_j$ and rotation angle $\alpha$ such that $X_j=X(f_j,\zeta_\lambda,\alpha_j)$.
    Therefore, from \Cref{lem:pol0},
    \[
\bigcap_{j=1}^{m_\lambda}\Hst(X_{j,\lambda})=\{t\in\NN\mid\GCD(t,\lambda)\in T_\lambda\},
    \]
    where $T_\lambda$ is defined by the greatest common devisor of $f_1,\ldots,f_m$ and $x^{\lambda}-1$.
    Therefore, for any  $t\in\bigcap_{j=1}^{m_\lambda}\Hst(X_{j,\lambda})$, $ \GCD(t,\lambda)\in T_\lambda$.
    Since $\GCD(t,\lambda)=\GCD(t+k\lambda,\lambda)$, 
    \[
      \{t + k\lambda \mid k\in\mathbb{Z}\}\cap\mathbb{N} \subset \bigcap_{j=1}^m\Hst(X_j).
\]
    Additionally, since $P_k(X)=\sum_{j=1}^mP_k(X_j)$, 
    \[
\bigcap_{j=1}^m\Hst(X_j)\subset\Hst(X).
\]
  \end{proof}

\subsection{The classification of infinite strength spherical designs in $\Sd^1$}\label{ssec:inf}

In this subsection, we present the proof of \Cref{thm:infmain} and \Cref{thm:hst}.
To establish \Cref{thm:infmain}, we prove the following theorem.

  \begin{thm}\label{thm:sufInf}
    If the harmonic strength of $X\subset\Sd^1$ is an infinite set, then $X$ is a cyclotomic design.
\end{thm}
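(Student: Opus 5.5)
We have to show: if $\Hst(X)$ is infinite, then $X$ is a cyclotomic set and there is a common index $t\in\{1,\ldots,\lambda-1\}$ lying in every $\Hst(X_j)$. By \Cref{lem:Hong}, $\Hst(X)=\{k\in\NN\mid P_k(X)=0\}$. The plan is to group the moment sum by the classes of $\sim_{\mu_\infty}$ and to exploit the fact that $k\mapsto P_k(X)$ is a linear recurrence sequence.

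\textbf{Step 1 (Skolem--Mahler--Lech).} Set $u_k\coloneq P_k(X)=\sum_{x\in X}x^k$. This is a linear recurrence sequence whose characteristic roots are the distinct elements of $X$. By the Skolem--Mahler--Lech theorem, its zero set is a finite set together with finitely many full arithmetic progressions. Since $\Hst(X)$ is infinite, there exist $r\ge 1$ and $t_0$ such that $u_{t_0+nr}=0$ for all $n\ge 0$.

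Write the classes as $X/\!\sim_{\mu_\infty}=\{X_j\}_{j=1}^m$ and fix representatives $\beta_j\in X_j$. Then every $x\in X_j$ has the form $x=\beta_j\omega$ with $\omega\in\mu_L$, where $L$ is a common order; for $r$ we may pass to a multiple of $L$. Taking $r$ to be a multiple of $L$ (replace $r$ by $rL$), we get
\[
0=u_{t_0+nr}=\sum_{j=1}^m (\beta_j^{r})^{n}\,\beta_j^{t_0}\sum_{\omega}\omega^{t_0}=\sum_{j=1}^m (\beta_j^{r})^n P_{t_0}(X_j),
\]
for all $n\ge0$, since $\omega^{r}=1$.

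\textbf{Step 2 (separating the classes).} The key point is that the numbers $\beta_j^{r}$ are pairwise distinct. Indeed, if $\beta_i^r=\beta_j^r$, then $\beta_i\beta_j^{-1}\in\mu_\infty$, which forces $i=j$. Hence the Vandermonde matrix in $n$ is invertible, and $P_{t_0}(X_j)=0$ for every $j$.

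This immediately gives $|X_j|\ge2$: if $|X_j|=1$, then $P_{t_0}(X_j)$ would be a single point of modulus $1$, which is nonzero. So $X$ is a cyclotomic set, with period $\lambda=\LCM\lambda_j$. Moreover, $t_0\in\bigcap_j\Hst(X_j)$.

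\textbf{Step 3 (reducing to a common index below $\lambda$).} By \Cref{lem:pol0}, the set $\bigcap_j\Hst(X_j)$ equals $\{t\mid \GCD(t,\lambda)\in T_\lambda\}$. This set is periodic modulo $\lambda$, so we may replace $t_0$ by its residue $t$ modulo $\lambda$. The residue is nonzero: by \Cref{rem:1}, or by the remark following \Cref{lem:pol0}, we have $\lambda\notin T_\lambda$, so no multiple of $\lambda$ lies in the intersection. Hence $t\in\{1,\ldots,\lambda-1\}$ is a common index, and $X$ is a cyclotomic design.

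The main obstacle is Step 1: applying Skolem--Mahler--Lech correctly and arranging the period so that the roots-of-unity factors within each class become trivial. Step 2 is the crux of the separation argument, and it is where the distinctness of the $\beta_j^r$ across classes is essential. No Lindemann--Weierstrass argument is needed here, because the Vandermonde argument works over $\CC$.
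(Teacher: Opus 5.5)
Your proof is correct and follows essentially the same route as the paper. Both arguments use Skolem--Mahler--Lech to get a full arithmetic progression in $\Hst(X)$, a Vandermonde argument to force $P_{t_0}$ to vanish on every class, and then reduce the common index modulo $\lambda$ via \Cref{lem:pol0} and \Cref{rem:1}. The only difference is a streamlining: you enlarge the step to a multiple of the common order $L$ and run the Vandermonde argument directly on the $\sim_{\mu_\infty}$-classes, whereas the paper runs it on the $\sim_{\mu_{\lambda_0}}$-classes (\Cref{lem:hst}) and then coarsens to $\sim_{\mu_\infty}$-classes via \Cref{lem:trivial}.
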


To prove \Cref{thm:sufInf}, we first prepare several lemmas. 
The first lemma establishes a property of the harmonic strength of infinite strength spherical designs.

\begin{lem}\label{lem:lech}
  Let $X$ be a finite subset of $S^1$.
If $\Hst(X)$ is an infinite set, then there exists a finite subset $N\subset\NN$ and integers $t_1,\ldots,t_l,{\lambda_0}\in\NN$ such that 
  \[
    \Hst(X)=N\cup\bigcup_{j=1}^l\{t_j+k{\lambda_0} \mid k\in\mathbb{Z}_{\geq0}\}.
\]
\end{lem}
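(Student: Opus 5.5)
The plan is to apply the Skolem--Mahler--Lech theorem to the sequence of complex moments. By \Cref{lem:Hong}, $\Hst(X)=\{k\in\NN\mid P_k(X)=0\}$, where $P_k(X)=\sum_{x\in X}x^k$. The sequence $u_k\coloneq P_k(X)$ for $k\in\mathbb{Z}_{\ge0}$ is a linear recurrence sequence over $\CC$. Indeed, with $X=\{x_1,\dots,x_n\}$, it satisfies the recurrence whose characteristic polynomial is $\prod_{j=1}^n(z-x_j)$, since each geometric sequence $(x_j^k)_k$ satisfies it. Equivalently, $u_k$ is an exponential polynomial with constant coefficients and distinct nonzero roots $x_j$.

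The Skolem--Mahler--Lech theorem (in its form over fields of characteristic zero, so complex coefficients are allowed) then states that the zero set $\{k\ge0\mid u_k=0\}$ is the union of a finite set and finitely many full arithmetic progressions $\{t_j+k\lambda_j\mid k\in\mathbb{Z}_{\ge0}\}$. Intersecting with $\NN$ (that is, removing $k=0$, which is never a zero since $u_0=|X|\neq0$) gives
\[
\Hst(X)=N\cup\bigcup_{j=1}^{l}\{t_j+k\lambda_j\mid k\in\mathbb{Z}_{\ge0}\},
\]
with $N$ finite. Since $\Hst(X)$ is infinite by hypothesis, we have $l\ge1$. If the theorem is quoted with possibly different moduli $\lambda_j$, we unify them by setting $\lambda_0\coloneq\LCM(\lambda_1,\dots,\lambda_l)$. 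Each progression $\{t_j+k\lambda_j\}$ then splits as the union of the $\lambda_0/\lambda_j$ progressions $\{t_j+r\lambda_j+k\lambda_0\mid k\ge0\}$ for $0\le r<\lambda_0/\lambda_j$. Relabeling these starting points as $t_1,\dots,t_{l'}$ yields the stated form with a single modulus $\lambda_0$. We may also take every $t_j\in\NN$, since each $t_j$ is an element of $\Hst(X)\subset\NN$.

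The only substantive step is the correct invocation of Skolem--Mahler--Lech. I expect the main point needing care to be checking its hypotheses: the sequence lies in a characteristic-zero field ($\CC$, or the finitely generated field $\mathbb{Q}(x_1,\dots,x_n)$), and it is nondegenerate enough that the conclusion applies. Neither is an issue, because the general statement holds for every linear recurrence over a field of characteristic zero, with degenerate ratios $x_i/x_j\in\mu_\infty$ absorbed into the periodic part. Everything else is bookkeeping on arithmetic progressions.
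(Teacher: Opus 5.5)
Your proposal is correct and follows the paper's argument: the paper also notes that the moments $P_k(X)$ satisfy a linear recurrence, namely Newton's identities with characteristic polynomial $\prod_{j}(z-\xi_j)$, and then applies Skolem--Mahler--Lech via Lemma~\ref{lem:Hong}. Your extra bookkeeping (merging moduli by taking an lcm and discarding $k=0$) is harmless; it only makes explicit what the paper's single-modulus statement of the Skolem--Mahler--Lech theorem already builds in.
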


Lemma~\ref{lem:lech} is immediately proved using the Skolem-Mahler-Lech theorem.

\begin{thm}[\cite{Lech,Skolem}]\label{thm:SkolemLech}
Let $\{c_v\}_{v\in\NN}$ be a complex sequence that satisfies a linear recurrence relation for $v \geq n$ as follows:
\[
  c_v = \alpha_1 c_{v-1} + \alpha_2 c_{v-2} + \cdots + \alpha_n c_{v-n},
\]
and define
\[
  Z(\{c_v\}_{v\in\NN}) \coloneq \{v \in \NN \mid c_v = 0\}.
\]
If $Z(\{c_v\}_{v\in\NN})$ is an infinite set, 
then there exist a finite subset $N\subset\NN$, a positive integer $\lambda_0$, and natural numbers $t_1,\ldots,t_l$ such that 
  \[
    Z(\{c_v\}_{v\in\NN})=N\cup\bigcup_{j=1}^l\{t_j+k{\lambda_0} \mid k\in\mathbb{Z}_{\geq0}\}
  \]
\end{thm}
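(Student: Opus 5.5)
The statement just given is the classical Skolem--Mahler--Lech theorem, which the paper cites rather than proves. If I were to prove it, I would use Skolem's $p$-adic analytic method. The idea is to show that, on each residue class modulo a suitable $\lambda_0$, the sequence interpolates to a $p$-adic analytic function on $\mathbb{Z}_p$. Strassmann's theorem then forces the zeros in that class to be either finite or the whole class.

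\emph{Reduction to an invertible matrix.} First I would reduce to the case $\alpha_n\neq 0$. If $\alpha_n=0$, the recurrence has smaller order, and I would drop trailing zero coefficients. The terms with index below $n$ are finitely many, so any zeros among them go into $N$. Then I write $c_v=u^{\top}A^{v}w$ for $v\ge n$, with $A$ the companion matrix of the recurrence; $A$ is invertible because $\det A=\pm\alpha_n\neq 0$. Let $K$ be the field generated over $\mathbb{Q}$ by the $\alpha_i$, the initial values $c_1,\dots,c_n$, and $\alpha_n^{-1}$. By Cassels' embedding lemma, for infinitely many primes $p$ there is an embedding $K\hookrightarrow\mathbb{Q}_p$ sending all these generators to $p$-adic integers, with $\alpha_n$ a unit. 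Fix such a $p$. Then $A\in GL_n(\mathbb{Z}_p)$, so its image in the finite group $GL_n(\mathbb{Z}/p^2\mathbb{Z})$ has finite order. Hence there is $\lambda_0$ with $A^{\lambda_0}=I+p^2B$ for some matrix $B$ with $\mathbb{Z}_p$-entries. I use $p^2$ rather than $p$ to avoid the usual trouble at $p=2$.

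\emph{Interpolation on residue classes.} For each residue $r\in\{n,\dots,n+\lambda_0-1\}$ set $F_r(k)=c_{r+k\lambda_0}=u^{\top}A^{r}(I+p^2B)^{k}w$. Expanding $(I+p^2B)^k=\sum_{j\ge0}\binom{k}{j}p^{2j}B^j$, the coefficient of $\binom{k}{j}$ tends to $0$ fast enough that $F_r$ is a power series in $k$ converging on $\mathbb{Z}_p$. Alternatively one can write $(I+p^2B)^k=\exp(k\log(I+p^2B))$. I regard checking this convergence, via $|p^{2j}/j!|_p\to0$, as the main technical obstacle.

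\emph{Applying Strassmann and assembling.} Strassmann's theorem says a nonzero power series converging on $\mathbb{Z}_p$ has only finitely many zeros there. So if $F_r$ vanishes at infinitely many $k\in\mathbb{Z}_{\ge0}$, then $F_r\equiv0$, and $c_v=0$ on the whole progression $r+\lambda_0\mathbb{Z}_{\ge0}$. Otherwise $F_r$ contributes finitely many zeros. Collecting all finite contributions into $N$ and letting the $t_j$ be the residues $r$ with $F_r\equiv0$ gives the required decomposition of $Z(\{c_v\})$. At least one such $t_j$ exists because $Z(\{c_v\})$ is assumed infinite.
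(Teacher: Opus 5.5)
Your proposal is correct. It is the standard $p$-adic proof of the Skolem--Mahler--Lech theorem, which the paper does not reproduce but simply cites from the works of Skolem and Lech: you embed the coefficients into $\mathbb{Z}_p$ by Cassels' lemma, arrange $A^{\lambda_0}\equiv I \pmod{p^2}$, interpolate $c_{r+k\lambda_0}$ analytically on each residue class using $|p^{2j}/j!|_p\to 0$, and finish with Strassmann's theorem. The only case your reduction skips is when all $\alpha_i=0$, which is trivial: then $c_v=0$ for all large $v$, and one takes $\lambda_0=1$.
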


\begin{proof}[Proof of Lemma~\ref{lem:lech}]
  Let $X=\{\xi_1,\ldots,\xi_n\}$ be a subset of $\Sd^1$, and assume $\Hst(X)$ is an infinite set.
Define
  \[
    E_t(X)\coloneq\sum_{1\leq l_1<l_2<\cdots<l_t\leq n}\prod_{1\leq k\leq t}\xi_{l_k}.
\]
  Then, from Newton's identities, for any $t>n$,
  \[
    P_t(X)=\sum_{j=1}^{n}(-1)^{j-1}E_{j}(X)P_{t-j}(X).
\]
  By \Cref{lem:Hong}, since 
  \[
    \Hst(X)=\{t\in\NN\mid P_t(X)=0\},
  \]
  from Theorem~\ref{thm:SkolemLech}, there exists a finite subset $N\subset\NN$ and integers $t_1,\ldots,t_l,{\lambda_0}\in\NN$ such that 
  \[
    \Hst(X)=N\cup\bigcup_{j=1}^l\{t_j+k{\lambda_0} \mid k\in\mathbb{Z}_{\geq0}\}.
\]
\end{proof}

Lemma~\ref{lem:lech} claims that if $X$ is an infinite strength spherical design, then the harmonic strength of $X$ contains an arithmetic progression.
We can also show that if the harmonic strength of $X$ possesses such a property, then $X$ must be a cyclotomic design.

\begin{lem}\label{lem:hst}

\textcolor{black}{Let \(X\subset S^1\) be finite. Suppose that there exist \(\lambda_0\in\mathbb N\), a finite set \(N\subset\mathbb N\), and a finite set \(T\subset\mathbb N\) such that
\[
  \Hst(X)= N \cup \{t+k\lambda_0\mid t\in T,\ k\in\mathbb Z_{\ge0}\}.
\]
Assume that
\[
X/\!\sim_{\mu_{\lambda_0}}=\{X_{j,\lambda_0}\}_{j=1}^{m_{\lambda_0}}.
\]
Then
\[
  \bigcap_{j=1}^{m_{\lambda_0}}\Hst(X_{j,\lambda_0})= \{t+k\lambda_0\mid t\in T,\ k\in\mathbb Z\}\cap\mathbb{N}.
\]}
\end{lem}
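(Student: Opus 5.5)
The plan is to reduce everything to the power sums $P_k$ via \Cref{lem:Hong}, and to exploit how these power sums behave along an arithmetic progression of step $\lambda_0$.

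\textbf{Setup.} By the remark following the definition of $\sim_{\mu_\lambda}$, the class $X_{j,\lambda_0}$ consists exactly of the points $\xi\in X$ with $\xi^{\lambda_0}=y_j$, and the values $y_1,\ldots,y_{m_{\lambda_0}}\in\mathbb T$ are pairwise distinct and nonzero. Hence for every $s\in\NN$ and every integer $k$ with $s+k\lambda_0\ge1$ we have
\[
  P_{s+k\lambda_0}(X_{j,\lambda_0})=y_j^{k}P_s(X_{j,\lambda_0}),
  \qquad
  P_{s+k\lambda_0}(X)=\sum_{j=1}^{m_{\lambda_0}}y_j^{k}P_s(X_{j,\lambda_0}).
\]
This identity drives both inclusions.

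\textbf{Inclusion $\supseteq$.} Let $s=t+k\lambda_0\in\NN$ with $t\in T$ and $k\in\mathbb Z$ (possibly negative). For all sufficiently large $k'\ge0$, the index $s+k'\lambda_0=t+(k+k')\lambda_0$ lies in the progression part of $\Hst(X)$. Taking $m_{\lambda_0}$ consecutive such values $k'=K,\ldots,K+m_{\lambda_0}-1$ gives the homogeneous linear system
\[
  \sum_{j} y_j^{k'}P_s(X_{j,\lambda_0})=0 .
\]
Its coefficient matrix is $\operatorname{diag}(y_j^K)$ times a Vandermonde matrix in the distinct nonzero $y_j$, so it is invertible. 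Therefore $P_s(X_{j,\lambda_0})=0$ for every $j$, i.e.\ $s\in\bigcap_j\Hst(X_{j,\lambda_0})$ by \Cref{lem:Hong}.

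\textbf{Inclusion $\subseteq$.} Let $s\in\bigcap_j\Hst(X_{j,\lambda_0})$. The displayed identity gives $P_{s+k\lambda_0}(X_{j,\lambda_0})=0$ for all $k\ge0$ and all $j$. Summing over $j$ shows $s+k\lambda_0\in\Hst(X)$ for all $k\ge0$. Since $N$ is finite, some $s+k\lambda_0$ lies outside $N$, so $s+k\lambda_0=t+k''\lambda_0$ for some $t\in T$ and $k''\ge0$. Then $s=t+(k''-k)\lambda_0$ belongs to the right-hand side.

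\textbf{Main obstacle.} The only step needing care is the Vandermonde step. It requires the $y_j$ to be pairwise distinct, which is exactly what the partition by $\sim_{\mu_{\lambda_0}}$ guarantees, and nonzero, which holds since $|y_j|=1$. It also requires enough consecutive indices in $\Hst(X)$, which the infinite progression supplies.
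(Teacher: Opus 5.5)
Your proof is correct and follows the paper's strategy: the identity $P_{t+k\lambda_0}(X)=\sum_j y_j^{k}P_t(X_{j,\lambda_0})$ with invertibility of a Vandermonde matrix in the distinct $y_j$ gives $\supseteq$, and $\lambda_0$-periodicity together with finiteness of $N$ gives $\subseteq$. The only difference is that you get the needed periodicity, including backward shifts with $k<0$, directly from $P_{s+k\lambda_0}(X_{j,\lambda_0})=y_j^{k}P_s(X_{j,\lambda_0})$ and a shifted Vandermonde window, whereas the paper uses \Cref{lem:ps} and \Cref{lem:pol0} for this; your version is slightly more self-contained because it avoids the polynomial-set and cyclotomic description.
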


\begin{proof}
Let
\[
  X/\!\sim_{\mu_{\lambda_0}}
  =
  \{X_{j,\lambda_0}\}_{j=1}^{m_{\lambda_0}}.
\]
Put \(m:=m_{\lambda_0}\), and define
\[
  y_j:=\xi_j^{\lambda_0}
  \qquad
  (\xi_j\in X_{j,\lambda_0},\ 1\le j\le m).
\]
Since any two elements of \(X_{j,\lambda_0}\) are equivalent under
\(\sim_{\mu_{\lambda_0}}\), the value \(y_j\) does not depend on the choice of
\(\xi_j\in X_{j,\lambda_0}\). Moreover, the numbers \(y_1,\ldots,y_m\) are
pairwise distinct, because different \(\sim_{\mu_{\lambda_0}}\)-classes have
different \(\lambda_0\)-th powers.
We prove this lemma by contradiction.
  Let $t$ be any element of $T$, and assume that there exists an $l$ such that $t\notin\Hst(X_{l,{\lambda_0}})$.
By direct calculation,
  \begin{align*}
    P_{t+k{\lambda_0}}(X)&=\sum_{j=1}^{m_{\lambda_0}}\sum_{\xi\in X_{j,{\lambda_0}}} \xi^{t+k{\lambda_0}}\\
    &=\sum_{j=1}^{m_{\lambda_0}}\sum_{\xi\in X_{j,{\lambda_0}}}  \xi^{t}(\xi^{{\lambda_0}})^k\\
    &=\sum_{j=1}^{m_{\lambda_0}} y_j^k\sum_{\xi\in X_{j,{\lambda_0}}}\xi^{t}\\
    &=\sum_{j=1}^{m_{\lambda_0}} P_{t}(X_{j,{\lambda_0}})y_j^k.
\end{align*}
  Next, we consider the hyperplane $H$ of $\CC^m$ defined as follows:
  \[
    H=\{(x_1,\cdots,x_{m})\mid P_{t}(X_{1,{\lambda_0}})x_1+P_{t}(X_{2,{\lambda_0}})x_2+\cdots+P_{t}(X_{m,{\lambda_0}})x_{m}=0\}.
\]
  Since $t\notin\Hst(X_{l,{\lambda_0}})$, $P_{t}(X_{l,{\lambda_0}})\neq0$ and $H$ is an $(m-1)$-dimensional space over $\CC^m$.
Furthermore, since $t+k{\lambda_0}\in\Hst(X)$ for all $k\in\mathbb{Z}_{\geq0}$,
  $P_{t+k{\lambda_0}}(X)=0$ and 
  \[
    (y_1^k,\cdots,y_{m}^k)\in H.
  \]
  Here, we define the $m\times m$ matrix $V$ as follows:
  \[
    V\coloneq\begin{pmatrix}
      1&1&\cdots&1\\
      y_1&y_2&\cdots&y_{m}\\
      &\vdots&&\\
      y_1^{m-1}&y_2^{m-1}&\cdots&y^{m-1}_{m}
    \end{pmatrix}.
\]
  Since $(y_1^k,\cdots,y_{m}^{k})\in H$ and $H$ is an $(m-1)$-dimensional space, $\Im(V)\subset H$ implies $\det(V)$ must be $0$.
Using the Vandermonde determinant,
  \[
    \det(V)=\prod_{1\leq j<k\leq m}(y_j-y_k)
  \]
  and $\det(V)\neq0$.
From the above, $\Im(V)\nsubseteq H$, which is a contradiction.
  Therefore, for any $t\in T$ and $j$, $t\in\Hst(X_{j,{\lambda_0}})$ holds.
  \textcolor{black}{From \Cref{lem:ps}, this also implies}
\begin{equation}
    \{t + k{\lambda_0} \mid t\in T, k\in\mathbb{Z}\}\cap\mathbb{N}\subset\bigcap_{j=1}^{m_{\lambda_0}}\Hst(X_{j,{\lambda_0}}).\label{eq:hst31}
\end{equation}  
On the other hand, since $P_k(X)=\sum_{j=1}^{m_{\lambda_0}}P_k(X_{j,\lambda_0})$, we have
  \begin{equation}
\bigcap_{j=1}^{m_{\lambda_0}}\Hst(X_{j,\lambda_0})\subset\Hst(X)=N\cup\{t + k{\lambda_0} \mid t\in T, k\in\mathbb{Z}_{\geq0}\}.\label{eq:hst41}
  \end{equation}
  Furthermore, by \Cref{lem:pol0}, 
\begin{equation}
\bigcap_{j=1}^{m_{\lambda_0}}\Hst(X_{j,\lambda_0})=\{t\in\mathbb{N}\mid \GCD(t,\lambda_0)\in T_{\lambda_0}\},\label{eq:hst311}
\end{equation}
where $T_{\lambda_0}$ is determined by the greatest common divisor of polynomials corresponding to $X_{j,\lambda_0}$.
  Therefore, from \Cref{eq:hst31}, \Cref{eq:hst41} and \Cref{eq:hst311}, we obtain
  \begin{equation*}
      \{t + k{\lambda_0} \mid t\in T, k\in\mathbb{Z}\}\cap\mathbb{N}\subset\{t\in\mathbb{N}\mid \GCD(t,\lambda_0)\in T_{\lambda_0}\}\subset N\cup\{t + k{\lambda_0} \mid t\in T, k\in\mathbb{Z}_{\geq0}\}.
  \end{equation*}
  \textcolor{black}{
This implies
\begin{equation}
    \{t\in\mathbb{N} \mid \gcd(t,\lambda_0)\in T_{\lambda_0}\}\setminus(\{t + k{\lambda_0} \mid t\in T, k\in\mathbb{Z}\}\cap\mathbb{N})\subset N.
\end{equation}
On the other hand,  since the set $\{t\in\mathbb{N} \mid \gcd(t,\lambda_0)\in T_{\lambda_0}\}$ is also periodic, \textcolor{black}{the set
\[
    \{t\in\mathbb{N} \mid \gcd(t,\lambda_0)\in T_{\lambda_0}\}\setminus (\{t + k{\lambda_0} \mid t\in T, k\in\mathbb{Z}\}\cap\mathbb{N})
\]
is also periodic.
Therefore,} if 
\[
    \{t\in\mathbb{N} \mid \gcd(t,\lambda_0)\in T_{\lambda_0}\}\setminus (\{t + k{\lambda_0} \mid t\in T, k\in\mathbb{Z}\}\cap\mathbb{N})\neq\emptyset,
\]
then this difference must be infinite.
Since $N$ is finite, this difference must be empty, and we conclude that
\begin{equation*}
    \{t + k{\lambda_0} \mid t\in T, k\in\mathbb{Z}\}\cap\mathbb{N}=\{t\in\mathbb{N} \mid \gcd(t,\lambda_0)\in T_{\lambda_0}\}.
\end{equation*}}
\end{proof}

Lemma~\ref{lem:hst} only refers to the property of harmonic strengths of $X/\!\sim_{\mu_{\lambda_0}}$, not to that of $X/\!\sim_{\mu_{\infty}}$.
However, this property of $X/\!\sim_{\mu_{\lambda_0}}$ implies that $X$ is a cyclotomic design.
\begin{lem}\label{lem:trivial}
  Let $X$ be a subset of $\Sd^1$, let $X/\!\sim_{\mu_{\lambda_0}}=\{X_{j,\lambda_0}\}_{j=1}^{m_{\lambda_0}}$, and let $X/\!\sim_{\mu_{\infty}}=\{X_{j}\}_{j=1}^{m}$.
  Then,
  \[ \bigcap_{j=1}^{m_{\lambda_0}}\Hst(X_{j,\lambda_0})\subset\bigcap_{j=1}^{m}\Hst(X_{j}).
\]
  In particular, if $\bigcap_{j=1}^{m_{\lambda_0}}\Hst(X_{j,\lambda_0})\neq\emptyset$, then $X$ is a cyclotomic design.
\end{lem}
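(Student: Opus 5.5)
The plan is to exploit the refinement relation between the two partitions. By the earlier remark, $X/\!\sim_{\mu_{\lambda_0}}$ is finer than or equal to $X/\!\sim_{\mu_{\infty}}$. So each $X_j$ is a disjoint union of certain classes $X_{i,\lambda_0}$, with $i$ ranging over an index set $I_j\subset\{1,\ldots,m_{\lambda_0}\}$. The sets $I_j$ partition $\{1,\ldots,m_{\lambda_0}\}$.

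For the inclusion, take $t\in\bigcap_{i=1}^{m_{\lambda_0}}\Hst(X_{i,\lambda_0})$. By \Cref{lem:Hong}, $P_t(X_{i,\lambda_0})=0$ for every $i$. Since power sums are additive over disjoint unions,
\[
P_t(X_j)=\sum_{i\in I_j}P_t(X_{i,\lambda_0})=0 .
\]
Applying \Cref{lem:Hong} again gives $t\in\Hst(X_j)$ for every $j$, which proves the inclusion.

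For the ``in particular'' part, suppose the left-hand intersection is nonempty. First I show that $X$ is a cyclotomic set. If some class $X_{i,\lambda_0}$ were a single point, its harmonic strength would be empty, since $P_t$ of one point has modulus $1$; that contradicts nonemptiness. Hence $|X_{i,\lambda_0}|\ge 2$ for all $i$, and \Cref{thm:triC} shows that $X$ is a cyclotomic set. Let $\lambda$ be its period.

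It remains to find a common index in $\{1,\ldots,\lambda-1\}$; I expect this to be the main obstacle. Each $X_{i,\lambda_0}$ is a polygon subset whose period divides $\lambda_0$. By \Cref{lem:polygons} and \Cref{lem:pol0}, the intersection $\bigcap_i\Hst(X_{i,\lambda_0})$ equals $\{t\in\NN\mid \GCD(t,\lambda_0)\in T_{\lambda_0}\}$, and it is nonempty. By the first part, every such $t$ lies in $\bigcap_j\Hst(X_j)$.

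Now each $X_j$ is a polygon subset whose period divides $\lambda$, so \Cref{lem:ps} shows that $\bigcap_j\Hst(X_j)$ is again a GCD-type set with respect to $\lambda$; this is the content of \Cref{thm:pol}. In particular it is invariant under $t\mapsto t+k\lambda$. Take any $t$ in it and reduce it modulo $\lambda$ to a residue $r\in\{0,\ldots,\lambda-1\}$.

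I claim $r\ne 0$. By \Cref{rem:1}, multiples of the period $\lambda_j$ of $X_j$ never lie in $\Hst(X_j)$. If $r=0$, then $\lambda\mid t$, so $\lambda_j\mid t$ for every $j$, and $t$ could not be a common index.

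Therefore $r\in\{1,\ldots,\lambda-1\}$. Because $\bigcap_j\Hst(X_j)$ is a GCD-type set with respect to $\lambda$, membership of $t$ implies membership of $r$. So $r\in\Hst(X_j)$ for all $j$, $r$ is a common index, and $X$ is a cyclotomic design.
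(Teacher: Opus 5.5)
Your proposal is correct and follows essentially the same route as the paper: additivity of power sums over the refinement $X/\!\sim_{\mu_{\lambda_0}}$ of $X/\!\sim_{\mu_\infty}$ gives the inclusion; the emptiness of $\Hst$ of a singleton together with \Cref{thm:triC} gives the cyclotomic-set property; and the $\lambda$-periodicity from \Cref{lem:ps} together with \Cref{rem:1} moves a common index into $\{1,\ldots,\lambda-1\}$. Two citation points do not affect correctness: your appeal to \Cref{lem:pol0} for the $\lambda_0$-classes is an unnecessary detour, and the periodicity of $\bigcap_j\Hst(X_j)$ actually comes from \Cref{lem:ps} (or \Cref{lem:pol0}), not from the statement of \Cref{thm:pol}.
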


\begin{proof}
  From the definition of \(X/\!\sim_{\mu_{\lambda_0}}\) and
\(X/\!\sim_{\mu_\infty}\), for any \(j\in\{1,\ldots,m\}\), there exist indices
\(j_1,\ldots,j_\ell\in\{1,\ldots,m_{\lambda_0}\}\) such that
\[
  X_j=\bigcup_{s=1}^{\ell}X_{j_s,\lambda_0}.
\]
Then, for every \(r\in\mathbb N\), we have
\[
  P_r(X_j)=\sum_{s=1}^{\ell}P_r(X_{j_s,\lambda_0}).
\]
Hence
\[
\bigcap_{s=1}^{\ell}\Hst(X_{j_s,\lambda_0})\subset\Hst(X_j).
\]
  Therefore,
  \begin{equation}
\bigcap_{j=1}^{m_{\lambda_0}}\Hst(X_{j,\lambda_0})\subset\bigcap_{j=1}^{m}\Hst(X_{j}).\label{eq:inclusion}
  \end{equation}

\textcolor{black}{
Finally, we show that if
\[
\bigcap_{j=1}^{m_{\lambda_0}}\Hst(X_{j,\lambda_0})\neq\emptyset,
\]
then \(X\) is a cyclotomic design. Assume that this intersection is nonempty.
By our assumption, we must have \(|X_{j,\lambda_0}|\ge2\) for all \(j\), because
a one-point set has empty harmonic strength. Therefore, by \Cref{thm:triC},
\(X\) is a cyclotomic set.
Let \(\lambda\) be the period of this cyclotomic set. Then
\[
  X/\!\sim_{\mu_\infty}=X/\!\sim_{\mu_\lambda}=\{X_j\}_{j=1}^{m}.
\]
Choose
\[
t\in\bigcap_{j=1}^{m_{\lambda_0}}\Hst(X_{j,\lambda_0}).
\]
By \Cref{eq:inclusion}, we have
\[
  t\in\bigcap_{j=1}^{m}\Hst(X_j).
\]
Since each \(X_j\) is a polygon subset whose period divides \(\lambda\),
\(\Hst(X_j)\) is \(\lambda\)-periodic by \Cref{lem:ps}. Moreover, by
\Cref{rem:1}, no positive multiple of \(\lambda\) belongs to \(\Hst(X_j)\).
Hence we may replace \(t\) by its representative
\(t'\in\{1,\ldots,\lambda-1\}\), and still have
\[
  t'\in\bigcap_{j=1}^{m}\Hst(X_j).
\]
Consequently, \(X\) is a cyclotomic design.
}
\end{proof}

\begin{ex}
  Let 
  \[
  X=\{1,-1,i,-i,e^i,e^{(1+\frac{2\pi}{3})i},e^{(1+\frac{4\pi}{3})i}\}
  \]
  be a cyclotomic design with period $12$.
Then, 
  \begin{align*}
    X/\!\sim_{\mu_6}&=\{\{1,-1\},\{i,-i\},\{e^i,e^{(1+\frac{2\pi}{3})i},e^{(1+\frac{4\pi}{3})i}\}\}=\{X_{1,6},X_{2,6},X_{3,6}\}\\
    X/\!\sim_{\mu_{\infty}}&=X/\!\sim_{\mu_{12}}=\{\{1,-1,i,-i\},\{e^i,e^{(1+\frac{2\pi}{3})i},e^{(1+\frac{4\pi}{3})i}\}\}=\{X_{1},X_{2}\}.
\end{align*}
  and
  \begin{align*}
    \Hst(X_{1,6})=\Hst(X_{2,6})&=\{t\in\mathbb{N}\mid \GCD(t,2)=1\},\\
    \Hst(X_{3,6})=\Hst(X_{2})&=\{t\in\mathbb{N}\mid \GCD(t,3)=1\},\\
    \Hst(X_{1})&=\{t\in\mathbb{N}\mid \GCD(t,4)\in\{1,2\}\}.
\end{align*}
  Especially,
  \begin{align*}
    \Hst(X_{1,6})\cap\Hst(X_{2,6})\cap\Hst(X_{3,6})&=\{t\in\mathbb{N}\mid \GCD(t,6)=1\},\\
    \Hst(X_{1})\cap\Hst(X_2)&=\{t\in\mathbb{N}\mid \GCD(t,12)\in\{1,2\}\}.
\end{align*}
  Certainly, 
  \[
  \Hst(X_{1,6})\cap\Hst(X_{2,6})\cap\Hst(X_{3,6})\subset\Hst(X_{1})\cap\Hst(X_2).
  \]
\end{ex}

Using Lemma~\ref{lem:lech}, Lemma~\ref{lem:hst}, and Lemma~\ref{lem:trivial}, we can prove Theorem~\ref{thm:sufInf}.

\begin{proof}[Proof of Theorem~\ref{thm:sufInf}]
  Let $X$ be a finite subset of $\Sd^1$ with an infinite $\Hst(X)$.
From \Cref{lem:lech}, there exists a finite subset $N\subset\NN$ and natural numbers $t_1,\ldots,t_\ell,\lambda\in\NN$ such that
  \[
    \Hst(X)=N\cup\bigcup_{j=1}^\ell\{t_j+k\lambda\mid k\in\mathbb{Z}_{\geq0}\}.
\]
  \textcolor{black}{From \Cref{lem:hst}, $\bigcap_{j=1}^m\Hst(X_{j,\lambda})\neq\emptyset$.}
From \Cref{lem:trivial}, $X$ is a cyclotomic design.
\end{proof}

Then, we can prove Theorem~\ref{thm:infmain}.

\begin{proof}[Proof of Theorem~\ref{thm:infmain}]
  If $X$ is a cyclotomic design, then from Theorem~\ref{thm:pol}, $X$ is an infinite strength spherical design.
  Conversely, if the harmonic strength of $X\subset\Sd^1$ is infinite, then from Theorem~\ref{thm:sufInf}, $X$ is a cyclotomic design.
 
\end{proof}

\textcolor{black}{Finally, we characterize the infinite part of the harmonic strength of cyclotomic designs. 
Let $X$ be a cyclotomic set with period $\lambda$, and assume $X/\!\sim_{\mu_\infty} = X/\!\sim_{\mu_\lambda} = \{X_{j}\}_{j=1}^m$. 
Based on \Cref{lem:pol0}, although we have already characterized the intersection $\bigcap_{j=1}^m\Hst(X_j)$, we have not yet considered whether the set $\Hst(X)\setminus(\bigcap_{j=1}^m\Hst(X_j))$ is finite or not. 
The following theorem shows that this set must be finite.}
  
\begin{thm}\label{thm:gcd}
Let $X$ be a cyclotomic set with period $\lambda$, and suppose $X/\!\sim_{\mu_\infty}=\{X_j\}_{j=1}^m$.
\textcolor{black}{Let $(f_j,\zeta_\lambda,\alpha_j)$ be a triplet consisting of a polynomial $f_j$ whose coefficients are all $0$ or $1$ with degree strictly less than $\lambda$,
 a root of unity $\zeta_\lambda$, and a rotation angle $\alpha_j$ such that
\[
    X_j=X(f_j,\zeta_\lambda,\alpha_j),
\]}
and suppose that
\[
    \textcolor{black}{\GCD(f_1,\ldots, f_m, x^{\lambda}-1)=\prod_{t\in T_\lambda}\Phi_{\tfrac{\lambda}{t}},}
\]
where if $\textcolor{black}{\GCD(f_1, \ldots, f_m, x^{\lambda}-1)=1}$, then we define $T_{\lambda}=\emptyset$ and the right-hand side as $1$.
Then, there exists a finite subset $N\subset\NN$ such that 
\[
    \Hst(X)=N\cup \{t\in\NN\mid\GCD(t,\lambda)\in T_\lambda\}.
\]
\end{thm}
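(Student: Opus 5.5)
The plan is to split $\Hst(X)$ along residue classes modulo $\lambda$, and to show that outside the set $S\coloneqq\{t\in\NN\mid\GCD(t,\lambda)\in T_\lambda\}$ only finitely many indices can occur.

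First, the inclusion $S\subset\Hst(X)$. By \Cref{lem:pol0}, applied with $X/\!\sim_{\mu_\lambda}=X/\!\sim_{\mu_\infty}=\{X_j\}_{j=1}^m$ (valid since $\lambda$ is the period), we have $\bigcap_j\Hst(X_j)=S$. Since $P_k(X)=\sum_jP_k(X_j)$, this gives $S\subset\Hst(X)$. It then suffices to show that $N\coloneqq\Hst(X)\setminus S$ is finite. Note that $S$ is a union of full residue classes modulo $\lambda$, because $\GCD(t,\lambda)$ depends only on $t \bmod \lambda$.

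Suppose for contradiction that $N$ is infinite. By pigeonhole, some residue $r\in\{1,\ldots,\lambda\}$ with $r\notin S$ has $r+k\lambda\in\Hst(X)$ for infinitely many $k\ge0$. Using $X_j=X(f_j,\zeta_\lambda,\alpha_j)$ and $f_j(1)=|X_j|$, we compute
\[
P_{r+k\lambda}(X)=\sum_{j=1}^m\alpha_j^{r+k\lambda}f_j(\zeta_\lambda^{r})=\sum_{j=1}^m c_j\,y_j^k,\qquad c_j\coloneqq\alpha_j^rf_j(\zeta_\lambda^r),\quad y_j\coloneqq\alpha_j^\lambda .
\]
Hence the sequence $u_k\coloneqq\sum_jc_jy_j^k$ is a linear recurrence sequence with infinitely many zeros.

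The key structural fact is that for $i\neq j$ the ratio $y_i/y_j=(\alpha_i\alpha_j^{-1})^\lambda$ is not a root of unity. This holds because $\alpha_i\in X_i$ and $\alpha_j\in X_j$ lie in different $\sim_{\mu_\infty}$-classes, so $\alpha_i\alpha_j^{-1}\notin\mu_\infty$, and therefore no power of it lies in $\mu_\infty$ either.

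Now apply the Skolem--Mahler--Lech theorem (\Cref{thm:SkolemLech}) to $u_k$. Its zero set contains a full progression $\{a+nM\mid n\ge0\}$ with $M\ge1$. Then $\sum_j(c_jy_j^a)(y_j^M)^n=0$ for all $n\ge0$. The numbers $y_j^M$ are pairwise distinct, again because the ratios $y_i/y_j$ are not roots of unity. So the Vandermonde argument used in \Cref{lem:hst} forces $c_jy_j^a=0$ for every $j$, hence $c_j=0$, that is, $f_j(\zeta_\lambda^r)=0$ for all $j$. This means $r\in\Hst(X_j)$ for all $j$, so $r\in S$, contradicting the choice of $r$. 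Therefore $N$ is finite and $\Hst(X)=N\cup S$.

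The main obstacle is the nondegeneracy step: checking that the $y_j^M$ are pairwise distinct, which is exactly where the $\sim_{\mu_\infty}$ partition is needed. The residue $r=\lambda$ needs no separate treatment, since the argument would then give $f_j(1)=0$, which is impossible.
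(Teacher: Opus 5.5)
Your argument is correct, and it takes a more direct route than the paper.

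\textbf{The paper's route.} It first disposes of $T_\lambda=\emptyset$ through \Cref{thm:infmain}. For $T_\lambda\neq\emptyset$ it applies Skolem--Mahler--Lech to the whole moment sequence $n\mapsto P_n(X)$ (via \Cref{lem:lech}), which produces some unrelated period $\lambda_0$. It then invokes \Cref{lem:hst} with the auxiliary partition $X/\!\sim_{\mu_{\lambda_0}}$, where the $\lambda_0$-th powers are distinct by construction. Finally it reconciles $\lambda_0$ with $\lambda$ using the sandwich $\bigcap_j\Hst(X_{j,\lambda_0})\subset S\subset N\cup\bigcap_j\Hst(X_{j,\lambda_0})$ from \Cref{lem:trivial}, plus a periodicity argument modulo $\LCM(\lambda,\lambda_0)$.

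\textbf{Your route.} You fix the modulus $\lambda$ from the start and apply Skolem--Mahler--Lech separately to each subsequence $k\mapsto P_{r+k\lambda}(X)=\sum_j c_j(\alpha_j^\lambda)^k$. Because the $X_j$ are $\sim_{\mu_\infty}$-classes, the characteristic roots $\alpha_j^\lambda$ have no root-of-unity ratios. So whatever step $M$ Skolem--Mahler--Lech returns, the $\alpha_j^{\lambda M}$ remain pairwise distinct and the Vandermonde argument goes through.

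\textbf{What each buys.} You isolate non-degeneracy of the recurrence as the real mechanism. This removes the case split on $T_\lambda$, the dependence on \Cref{thm:infmain}, and the comparison of two different partitions. The paper's route instead reuses \Cref{lem:hst} and \Cref{lem:trivial}, which it needs anyway for the classification, and it never has to reason about roots of unity among ratios.

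\textbf{A small slip.} $\alpha_i$ need not lie in $X_i$; only $\alpha_i\zeta_\lambda^{a}\in X_i$ for some $a$. However, $\alpha_i$ is $\sim_{\mu_\infty}$-equivalent to every point of $X_i$, and that is all your claim that $(\alpha_i\alpha_j^{-1})^\lambda\notin\mu_\infty$ actually uses.
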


\begin{proof}
  From \Cref{lem:pol0}, 
  \begin{equation}
    \bigcap_{j=1}^m\Hst(X_{j})=\{t\in\mathbb{N}\mid \GCD(t,\lambda)\in T_{\lambda}\}.\label{eq:hst2}
  \end{equation}
  \textcolor{black}{
  If $T_\lambda=\emptyset$, then $X$ is not cyclotomic design. 
  Hence, from \Cref{thm:infmain}, $\Hst(X)$ is finite, \textcolor{black}{that is}, there exists a finite subset $N\subset\mathbb{N}$ such that \[
    \Hst(X)=N\cup\{t\in\mathbb{N}\mid \GCD(t,\lambda)\in T_{\lambda}\}.
  \]
  Assume that $T_\lambda\neq\emptyset$.
  Then, $X$ is a cyclotomic design.
  Therefore, from \Cref{thm:infmain}, $X$ is infinite strength spherical design, and from \Cref{lem:hst}, there exists a finite subset $N\subset\NN$ and a period $\lambda_0\in\NN$ such that 
  \[
    \Hst(X)=N\cup\bigcap_{j=1}^{m_{\lambda_0}}\Hst(X_{j,\lambda_0}),
\] 
where $X/\!\sim_{\mu_{\lambda_0}}=\{X_{j,\lambda_0}\}_{j=1}^{m_{\lambda_0}}$. }
  From \Cref{lem:trivial} and \Cref{eq:hst2}, 
\begin{equation}
\bigcap_{j=1}^{m_{\lambda_0}}\Hst(X_{j,\lambda_0})\subset \bigcap_{j=1}^{m}\Hst(X_{j})=\{t\in\mathbb{N}\mid \GCD(t,\lambda)\in T_{\lambda}\}.\label{eq:hst3}
\end{equation}  
  Since $P_k(X_j)=\sum_{k=1}^{\ell}P_k(X_{j_k,\lambda_0})$,
  \begin{equation}
    \bigcap_{j=1}^{m}\Hst(X_{j})\subset\Hst(X)=N\cup\bigcap_{j=1}^{m_{\lambda_0}}\Hst(X_{j,\lambda_0}).\label{eq:hst4}
  \end{equation}
  Therefore, from \Cref{eq:hst3} and \Cref{eq:hst4},
  \[
    \bigcap_{j=1}^{m_{\lambda_0}}\Hst(X_{j,\lambda_0})\subset \{t\in\mathbb{N}\mid \GCD(t,\lambda)\in T_{\lambda}\}\subset N\cup\bigcap_{j=1}^{m_{\lambda_0}}\Hst(X_{j,\lambda_0}).
\]
\textcolor{black}{
Set
\[
D:=
\{t\in\mathbb{N}\mid \GCD(t,\lambda)\in T_{\lambda}\}
\setminus
\bigcap_{j=1}^{m_{\lambda_0}}\Hst(X_{j,\lambda_0}).
\]
Then \(D\subset N\).
Moreover, \(D\) is periodic with period
\[
  L:=\LCM(\lambda,\lambda_0).
\]
If \(D\neq\emptyset\), then \(D\) is infinite, which contradicts the finiteness
of \(N\). Hence \(D=\emptyset\).
Therefore,
}
\[
  \bigcap_{j=1}^{m_{\lambda_0}}\Hst(X_{j,\lambda_0})=\{t\in\mathbb{N}\mid \GCD(t,\lambda)\in T_{\lambda}\}.
\]
\end{proof}

Then, we can prove \Cref{thm:hst}.

\begin{proof}[Proof of \Cref{thm:hst}]
Let \(X\) be an infinite strength spherical design in \(\Sd^d\).

First, assume that \(d\ge2\). By \Cref{thm:antipodal}, \(X\) is an antipodal
set. If \(|X|=2\), then by \Cref{rem:two-point-antipodal},
\[
  \Hst(X)=\{t\in\NN\mid \GCD(t,2)=1\}.
\]
If \(|X|\ge3\), then by \Cref{thm:hstGEQ2}, there exists a finite subset
\[
  N\subset\{t\in\NN\mid \GCD(t,2)=2\}
\]
such that
\[
  \Hst(X)=N\cup\{t\in\NN\mid \GCD(t,2)=1\}.
\]
Thus, in both cases, \(\Hst(X)\) has the weak GCD property, and its period is
\(2\).

Next, assume that \(d=1\). By \Cref{thm:infmain}, \(X\) is a cyclotomic design.
Therefore, by \Cref{thm:gcd}, there exist a finite subset \(N\subset\NN\), a
positive integer \(\lambda\), and a subset
\[
  T_\lambda\subset\{t\in\mathbb N\mid t\mid\lambda\}
\]
such that
\[
  \Hst(X)=N\cup\{t\in\NN\mid\GCD(t,\lambda)\in T_\lambda\}.
\]
Hence \(\Hst(X)\) has the weak GCD property.
\end{proof}

\section{The existence problem for infinite strength spherical design on $\Sd^1$}\label{sec:inverse}

In this section, we primarily focus on \Cref{prob:inverse}.
\Cref{thm:gcd} imposes a restriction on the harmonic strength of infinite strength spherical designs in $\Sd^1$.
First, we define the GCD property, which characterizes the harmonic strength of a cyclotomic design.


From \Cref{thm:gcd}, the harmonic strength of a cyclotomic design always has the weak GCD property.
This provides a partial answer to \Cref{prob:inverse}; that is, if $T$ does not possess the weak GCD property, then an infinite strength spherical design whose harmonic strength is $T$ does not exist.
For example, there is no finite set \(X\subset S^1\) such that \(\Hst(X)=\{2^k\mid k\in\mathbb Z_{\ge0}\}.\)
Therefore, \Cref{prob:inverse} can be reduced to the following problem:

\begin{Pb}\label{prob:inverse2}
  Let $T$ be an infinite subset of $\mathbb{N}$ which has the weak GCD property with period $\lambda_p$.
Then, does there exist an $X\subset\Sd^1$ such that $\Hst(X)=T$?
\end{Pb}

Then, in this section, we primarily focus on \Cref{prob:inverse2}.
We show that \Cref{prob:inverse2} can be solved by examining the properties of finitely many polynomials.

\begin{thm}[Restatement of \Cref{thm:decide}] \label{thm:redecide}
Let \(T\) be an infinite subset of \(\mathbb{N}\) which has the weak GCD
property with period \(\lambda_p\), and suppose that
\[
  T=N\cup\{t\in\NN\mid\GCD(t,\lambda_p)\in T_S\},
\]
where \(N\subset\mathbb N\) is finite and
\[
  T_S\subset\{t\in\mathbb N\mid t\mid\lambda_p\}.
\]
Then an \(X\subset\Sd^1\) with \(\Hst(X)=T\) exists if and only if there exist
a positive integer \(m\) and nonzero polynomials
\[
  f_1,\ldots,f_m\in\mathbb Q[x]
\]
such that the following three conditions hold:
\begin{itemize}
  \item[$\cdot$] \(\deg f_j<\lambda_p\) for all \(j=1,\ldots,m\).
  \item[$\cdot$] All coefficients of each \(f_j\) are \(0\) or \(1\).
  \item[$\cdot$]
  \[
    \GCD(f_1,\ldots,f_m,x^{\lambda_p}-1)
    =
    \prod_{t\in T_S}\Phi_{\lambda_p/t}.
  \]
\end{itemize}
\end{thm}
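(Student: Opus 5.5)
We prove the two implications separately. In the necessity direction we pass from the period of a cyclotomic design to the prescribed period $\lambda_p$. In the sufficiency direction we build $X$ as a product of two sets, with generically chosen rotations.

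\emph{Necessity.} Let $X\subset\Sd^1$ satisfy $\Hst(X)=T$. Since $T$ is infinite, \Cref{thm:infmain} says $X$ is a cyclotomic design, and \Cref{thm:gcd} gives $\Hst(X)=N'\cup\{t\mid \GCD(t,\lambda)\in T_\lambda\}$, where $\lambda$ is the period of $X$. Put $S:=\{t\in\NN\mid\GCD(t,\lambda_p)\in T_S\}$.

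Two periodic subsets of $\NN$ that agree up to a finite set must coincide, because their difference is periodic with period $\LCM(\lambda,\lambda_p)$. Hence $S=\{t\mid\GCD(t,\lambda)\in T_\lambda\}$.

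Next I would show that $\lambda_p$ equals the minimal additive period $p$ of $S$; in particular $p\mid\lambda_p$. The additive periods of $S$ form a subgroup of $\mathbb{Z}$, so $p$ is well defined. To see that membership in $S$ depends only on $\GCD(t,p)$, take $t,t'$ with $\GCD(t,p)=\GCD(t',p)$. By the Chinese remainder theorem there is $t''\equiv t'\pmod p$ with $\GCD(t'',\lambda_p)=\GCD(t,\lambda_p)$, so $t''\in S$ if and only if $t\in S$, and also $t''\in S$ if and only if $t'\in S$. Minimality of $\lambda_p$ then forces $\lambda_p=p$.

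Now work with the partition $X/\!\sim_{\mu_{\lambda_p}}=\{X_{j,\lambda_p}\}$. Since $S\subset\Hst(X)$ is closed under $t\mapsto t+k\lambda_p$, the Vandermonde argument of \Cref{lem:hst} (with $\lambda_0=\lambda_p$) shows that every $t\in S$ lies in $\bigcap_j\Hst(X_{j,\lambda_p})$. Conversely this intersection is contained in $\Hst(X)=N\cup S$. It is periodic by \Cref{lem:pol0}, so the periodicity argument gives
\[
\bigcap_j\Hst(X_{j,\lambda_p})=S .
\]
In particular this intersection is nonempty, so every class has at least two points. \Cref{lem:polygons} then supplies $0$-$1$ polynomials $f_j$ with $\deg f_j<\lambda_p$ and $X_{j,\lambda_p}=X(f_j,\zeta_{\lambda_p},\alpha_j)$. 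By \Cref{lem:pol0}, $S=\{t\mid\GCD(t,\lambda_p)\in T_{\lambda_p}\}$. As $t$ runs over $\NN$, $\GCD(t,\lambda_p)$ takes every divisor of $\lambda_p$, so $T_{\lambda_p}=T_S$. This yields the required GCD identity.

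\emph{Sufficiency.} Given $f_1,\dots,f_m$, set
\[
Z=\bigcup_j X(f_j,\zeta_{\lambda_p},\alpha_j)
\]
with rotations $\alpha_j$ still to be chosen. By \cite{MN25}, take a finite $Y$ with $\Hst(Y)=N$; if $N=\emptyset$, let $Y$ be a single point. Rotate $Y$ by some $\beta\in\mathbb{T}$ and set $X:=\beta Y\cdot Z=\{\beta yz\mid y\in Y,\ z\in Z\}$. If all these products are distinct, then $P_k(X)=\beta^kP_k(Y)P_k(Z)$. Therefore $\Hst(X)=\Hst(Y)\cup\Hst(Z)$ by \Cref{lem:Hong}.

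It remains to show $\Hst(Z)=S$. We have $P_k(Z)=\sum_j\alpha_j^k f_j(\zeta_{\lambda_p}^k)$. For each fixed $k$ at which some coefficient $f_j(\zeta_{\lambda_p}^k)$ is nonzero, the set of $(\alpha_1,\dots,\alpha_m)\in\mathbb{T}^m$ making this sum vanish is a proper real-analytic subset, hence of measure zero. Likewise, each coincidence among points of $Z$ or among the products $\beta yz$ is a measure-zero condition on $(\alpha_1,\dots,\alpha_m,\beta)$. There are only countably many such conditions, so generic parameters avoid all of them.

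For such parameters, $P_k(Z)=0$ exactly when $f_j(\zeta_{\lambda_p}^k)=0$ for all $j$. By \Cref{lem:pol0} and the GCD hypothesis this happens exactly when $k\in S$. Hence $\Hst(X)=N\cup S=T$.

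\emph{Main obstacle.} I expect the hardest points to be the minimal-period step in the necessity direction and a careful statement of the genericity argument in the sufficiency direction. The latter is where one could alternatively invoke the Lindemann--Weierstrass linear-independence form mentioned in the introduction.
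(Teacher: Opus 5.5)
Your necessity direction is correct and is essentially the paper's argument: \Cref{lem:hst} with $\lambda_0=\lambda_p$, followed by \Cref{lem:polygons} and \Cref{lem:pol0}. The detour through \Cref{thm:infmain}, \Cref{thm:gcd} and the minimal-period step is unnecessary. The Vandermonde argument only uses that $S$ is invariant under $t\mapsto t+\lambda_p$, which is immediate from its gcd description. Your generic choice of the $\alpha_j$ is also a valid substitute for the Lindemann--Weierstrass choice $\alpha_j=e^{ji}$ in proving $\Hst(Z)=S$.

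The gap is your claim that the products $\beta yz$ are pairwise distinct for generic $(\alpha_1,\dots,\alpha_m,\beta)$.

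\begin{itemize}
\item The parameter $\beta$ plays no role, since $\beta yz=\beta y'z'$ if and only if $yz=y'z'$.
\item Suppose $z=\alpha_j\zeta_{\lambda_p}^{a}$ and $z'=\alpha_j\zeta_{\lambda_p}^{a'}$ lie in the same block $X(f_j,\zeta_{\lambda_p},\alpha_j)$. Then $yz=y'z'$ if and only if $y/y'=\zeta_{\lambda_p}^{a'-a}$, a condition in which none of your parameters appears. Such a coincidence is not a measure-zero condition: it either never happens or happens for every choice. Only the cross-block coincidences ($j\neq j'$) are genuinely avoided by generic $\alpha$.
\end{itemize}

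Excluding the same-block coincidences requires that no quotient of two distinct points of $Y$ equals some $\zeta_{\lambda_p}^{a'-a}$ with $a\neq a'$ in the support of some $f_j$. The black-box existence statement of \cite{MN25} gives you no control over $YY^{-1}\cap\mu_{\lambda_p}$. For instance, nothing in it excludes a pair $y,-y\in Y$ when $\lambda_p$ is even and some $f_j$ has exponents $a$ and $a+\lambda_p/2$. Once products collide, $X$ loses points. The identity $P_k(X)=\beta^kP_k(Y)P_k(Z)$, and with it $\Hst(X)=N\cup S$, is then no longer justified.

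What is missing is a continuous deformation of the $N$-part that keeps its harmonic strength fixed; a rotation cannot do this. The paper supplies it in \Cref{thm:N}:
\begin{itemize}
\item It starts from a design with harmonic strength $S$ and multiplies by the five-point sets $X^{1/t}(x)$ of \Cref{lem:singleton2}, one $t\in N$ at a time.
\item It shows that the set of $x$ for which $X^{1/t}(x)X^{1/t}(x)^{-1}\setminus\{1\}$ meets $YY^{-1}\setminus\{1\}$ is finite. The argument raises to the $t$-th power and uses that the quotients $g_i(x)g_j(x)^{-1}$ are nonconstant algebraic functions.
\item It then picks a good $x$ outside this finite set from the uncountably many values provided by \Cref{lem:singleton2}, and \Cref{lem:prod} gives the union of harmonic strengths.
\end{itemize}
Equivalently, you could build your $Y$ as such an iterated product, choosing each parameter so that $YY^{-1}\cap\mu_{\lambda_p}=\{1\}$; that is the same argument in a different order.
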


Note that the number of nonzero \(0\)-\(1\) polynomials of degree strictly less than \(\lambda_p\) is \(2^{\lambda_p}-1\). Hence one may decide the existence of an \(X\) with \(\Hst(X)=T\) by checking at most \(2^{2^{\lambda_p}-1}\) subfamilies.
In this section, we aim to prove \Cref{thm:decide}.

This section is organized as follows.
First, in \Cref{ssec:inverseEq}, we show that \Cref{prob:inverse2} can be resolved by restricting our attention to the case where $T$ has the GCD property.
Next, in \Cref{ssec:inverseGCD}, we prove a restricted version of \Cref{thm:decide}.
Finally, in \Cref{ssec:inversepro}, we complete the proof of \Cref{thm:decide} and demonstrate how it can be applied to solve certain existence problems.

\subsection{The relationship between existence when $T$ has the GCD property and existence when $T$ has the weak GCD property}\label{ssec:inverseEq}

\begin{thm}\label{thm:eqInverse}
  Let $T_{\lambda_p}$ be an infinite subset of $\mathbb{N}$ which has the GCD property with period $\lambda_p$, and let $N$ be any finite subset of $\mathbb{N}$.
  Then, there exists an $X\subset\Sd^1$ such that $\Hst(X)=T_{\lambda_p}$ if and only if there exists an $X'$ such that $\Hst(X')=N\cup T_{\lambda_p}$.
\end{thm}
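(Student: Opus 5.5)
The plan is to prove the two implications separately. The backward implication uses the structure theory of Section~\ref{sec:dim1} together with a genericity argument based on Lindemann--Weierstrass. The forward implication multiplies a design by a finite-strength design from \cite{MN25}.

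\textbf{($\Leftarrow$)} Suppose $\Hst(X')=N\cup T_{\lambda_p}$. This set is infinite, so by \Cref{thm:infmain} the set $X'$ is a cyclotomic design. Let $\lambda$ be its period and $X'/\!\sim_{\mu_\infty}=\{X_j\}_{j=1}^m$ with $X_j=X(f_j,\zeta_\lambda,\alpha_j)$. By \Cref{thm:gcd},
\[
\Hst(X')=N'\cup P,\qquad P=\{t\in\NN\mid \GCD(t,\lambda)\in T_\lambda\},
\]
with $N'$ finite. Both $P$ and $T_{\lambda_p}$ are periodic, and their symmetric difference lies in the finite set $N\cup N'$. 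Since a nonempty periodic set is infinite, this forces $P=T_{\lambda_p}$.

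It then suffices to produce a set whose harmonic strength is exactly $P$. Keep the polynomials $f_j$, but replace the rotations by $\beta_j=e^{ia_j}$, where $a_1,\dots,a_m$ are distinct nonzero real algebraic numbers. Set $Y=\bigcup_j X(f_j,\zeta_\lambda,\beta_j)$.

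The classes of $Y$ remain distinct modulo $\mu_\infty$: if $\beta_i/\beta_j$ were a root of unity, then $a_i-a_j$ would be a rational multiple of $\pi$, which is impossible because $\pi$ is transcendental. Hence $Y/\!\sim_{\mu_\infty}$ is given by the same $f_j$, and \Cref{lem:pol0} gives $\bigcap_j\Hst(Y_j)=P$.

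For $k\notin P$ we have
\[
P_k(Y)=\sum_j f_j(\zeta_\lambda^k)\,e^{ika_j},
\]
where the coefficients $f_j(\zeta_\lambda^k)\in\overline{\mathbb Q}$ are not all zero. The exponents $ika_j$ are distinct algebraic numbers, so by the Lindemann--Weierstrass theorem the sum is nonzero. Hence $\Hst(Y)=P=T_{\lambda_p}$.

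\textbf{($\Rightarrow$)} Suppose $\Hst(X)=T_{\lambda_p}$. By \cite{MN25} there is a finite $Y\subset\Sd^1$ with $\Hst(Y)=N$. Form the product set $X'=\{xy\mid x\in X,\ y\in Y\}$. If all products $xy$ are distinct, then $P_k(X')=P_k(X)P_k(Y)$, so
\[
\Hst(X')=\Hst(X)\cup\Hst(Y)=T_{\lambda_p}\cup N.
\]
The remaining issue is to guarantee that the products are distinct, that is, that the quotient sets satisfy $(X/X)\cap(Y/Y)=\{1\}$.

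\textbf{Main obstacle.} Ensuring this distinctness is the main difficulty. My first attempt is to apply the generic-rotation replacement from the previous paragraph to $X$. That keeps $\Hst(X)=T_{\lambda_p}$ and makes every quotient between different classes of the form $e^{i(a_i-a_j)}\zeta$ with $\zeta$ a root of unity. Choosing the $a_j$ to avoid the finitely many values in $Y/Y$ then removes all cross-class collisions.

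The quotients within a class lie in $\mu_\lambda$, so one still needs $(Y/Y)\cap\mu_\lambda=\{1\}$. For this I would perturb $Y$ inside the construction of \cite{MN25}: that construction has free continuous parameters, so generic choices should avoid the finitely many bad algebraic relations. Alternatively, I would replace $Y$ by a set with the same moments' zero set whose points have pairwise quotients of infinite order. If collisions cannot be avoided, the fallback is to check that merging coincident points does not alter the zero set of the moments in the relevant degrees.
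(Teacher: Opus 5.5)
Your ($\Leftarrow$) direction is correct. It is a mild variant of the paper's \Cref{lem:useW}: you obtain a GCD description through \Cref{thm:infmain} and \Cref{thm:gcd} using the design's own period $\lambda$, then match it to $T_{\lambda_p}$ by periodicity, whereas the paper applies \Cref{lem:hst} directly modulo $\lambda_p$. The Lindemann--Weierstrass re-rotation is the same in both.

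The ($\Rightarrow$) direction, however, has a genuine gap, and it is exactly the step you flag. After re-rotating $X$ you still need a $Y$ with $\Hst(Y)=N$ and $YY^{-1}\cap\mu_\lambda=\{1\}$, and nothing you invoke provides it. The result of \cite{MN25} is a bare existence statement with no control over $YY^{-1}$. Saying that ``generic parameters should avoid the bad relations'' requires two facts you do not establish: that parameters giving $\Hst(Y)=N$ \emph{exactly} are plentiful, and that the collision locus is small. You also cannot remove these collisions by modifying $X$ further, because the root-of-unity quotients inside the polygon classes of $X$ are what make $\Hst(X)$ infinite. Your fallback is false in general: once products coincide, $P_k(X\cdot Y)\neq P_k(X)P_k(Y)$. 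For example, $X=\{1,-1\}$ and $Y=\{1,-1,i\}$ have $\Hst(X)=2\NN-1$ and $\Hst(Y)=\emptyset$, yet $X\cdot Y=\{\pm1,\pm i\}$ has $2\in\Hst(X\cdot Y)$.

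The missing idea, which is the paper's \Cref{thm:N}, is to place the genericity in a one-parameter family where it can actually be verified, and to add the elements of $N\setminus T_{\lambda_p}$ one at a time. Take the current design $Z$ with $\Hst(Z)=M\cup T_{\lambda_p}$ and a new index $t$. Multiply $Z$ by the five-point set $X^{1/t}(x)=\{h_1(x),\dots,h_5(x)\}$ of \Cref{lem:singleton2}, where the $g_i=h_i^t$ are the points of $X^{1/1}(x)$. If $h_i(x)h_j(x)^{-1}=\eta\in ZZ^{-1}\setminus\{1\}$ for some $i\neq j$, then $g_i(x)g_j(x)^{-1}=\eta^t$. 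Since $g_ig_j^{-1}$ is a nonconstant algebraic function of $x$, this happens for only finitely many $x$. Because \Cref{lem:singleton2} supplies admissible $x$ (those with $\Hst(X^{1/t}(x))=\{t\}$) in every open subinterval, you can choose $x$ outside this finite bad set. Then all products are distinct, and \Cref{lem:prod} gives $\Hst(Z\cdot X^{1/t}(x))=M\cup T_{\lambda_p}\cup\{t\}$. This disposes of every collision at once, including those involving roots of unity in $ZZ^{-1}$, so your re-rotation of $X$ is not needed.
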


To prove \Cref{thm:eqInverse}, first, we establish its sufficiency with the following lemma.
\begin{lem}\label{thm:N}
  Let $T_{\lambda_p}$ be an infinite subset of $\mathbb{N}$ which has the GCD property with period $\lambda_p$,
  and let $X$ be a finite subset of $\mathbb{S}^1$ with $\Hst(X)=T_{\lambda_p}$.
Then, for any finite subset $N\subset\NN$, there exists an $X_N\subset \mathbb{S}^1$ such that $\Hst(X_N)=N\cup T_{\lambda_p}$.
\end{lem}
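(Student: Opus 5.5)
Given $X$ with $\Hst(X)=T_{\lambda_p}$ and a finite $N\subset\NN$, I will construct $X_N$ as a disjoint union $X_N=X'\cup Y$. Here $X'$ is a suitable rotation of $X$, or of a copy of $X$ that keeps its harmonic strength. The set $Y$ has finite harmonic strength and is placed in general position relative to $X'$. By \Cref{lem:Hong}, what has to be controlled are the moments $P_k(X_N)=P_k(X')+P_k(Y)$.

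The natural choice is the following. By the earlier finite result~\cite{MN25}, for the finite set $N'=N\setminus T_{\lambda_p}$ there is $Y_0\subset\Sd^1$ with $\Hst(Y_0)=N'$. I would rather use a single set $Y_0$ with $\Hst(Y_0)=N$ exactly, which that result supplies. Then put $X_N=\beta X\cup\gamma Y_0$, with rotations $\beta,\gamma\in\mathbb{T}$ to be chosen. Rotating does not change harmonic strength, since $P_k(\beta X)=\beta^kP_k(X)$. We get
\[
P_k(X_N)=\beta^kP_k(X)+\gamma^kP_k(Y_0).
\]
This vanishes whenever $k\in T_{\lambda_p}\cap N$. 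When $k$ lies in exactly one of the two sets, only one term vanishes, so we must make sure the other term does not. The trouble is that $k\in T_{\lambda_p}\setminus N$ would then have $P_k(X_N)=\gamma^kP_k(Y_0)\neq0$. So this naive union fails, and the roles must be arranged differently.

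The corrected plan is to make the perturbing piece have harmonic strength $N\cup T_{\lambda_p}$ up to finitely many unwanted indices, and then cancel it. Concretely, take $Y=Y_0\cdot\mu$, meaning a product-type set $\{y\zeta\}$ whose moments factor as
\[
P_k(Y)=P_k(Y_0)\,P_k(Z),
\]
where $Z$ is a rotation of $X$. Then $\Hst(Y)=N\cup T_{\lambda_p}$ exactly, provided the products $y z$ are pairwise distinct, which a generic rotation guarantees. This is the decisive simplification: for a generic rotation angle $\theta$, the set
\[
X_N=\{y\,e^{i\theta}x\mid y\in Y_0,\ x\in X\}
\]
has $|X_N|=|Y_0||X|$ points. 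Its moments are $P_k(X_N)=e^{ik\theta}P_k(Y_0)P_k(X)$, which vanish if and only if $k\in N\cup T_{\lambda_p}$.

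The steps, in order, are these. First, invoke~\cite{MN25} to obtain $Y_0$ with $\Hst(Y_0)=N$; if $N=\emptyset$, take $X_N=X$. Second, define the product set and verify the moment factorization, which holds when the map $(y,x)\mapsto yx$ is injective. Third, conclude by \Cref{lem:Hong}.

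The main obstacle is injectivity, i.e.\ that $y_1x_1=y_2x_2$ forces $(y_1,x_1)=(y_2,x_2)$. A plain rotation does not help, since it multiplies all products by the same factor. Instead I would replace $Y_0$ by a rotation of its individual points that does not change harmonic strength. If that is unavailable, I would use the freedom in~\cite{MN25}'s construction to choose $Y_0$ so that the ratios $y_1y_2^{-1}$ avoid the finite set $\{x_2x_1^{-1}\}$. If neither works, the fallback is to replace $Y_0$ by $Y_0^{q}$-type reparametrizations, or to argue that coincidences only merge points. Merging, however, changes the moments, so genuine disjointness is needed.
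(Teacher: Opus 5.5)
\textbf{There is a genuine gap.} Your overall strategy is the paper's. The moments of a product set factor as $P_k(Y_0\cdot X)=P_k(Y_0)P_k(X)$, which is exactly \Cref{lem:prod}, and this gives harmonic strength $\Hst(Y_0)\cup T_{\lambda_p}$. The gap is that the one hypothesis making this work, $|Y_0\cdot X|=|Y_0||X|$, is never established, and that is essentially the whole content of the lemma.

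Your assertion that ``a generic rotation angle $\theta$'' guarantees $|X_N|=|Y_0||X|$ is false. As you note yourself a paragraph later, $e^{i\theta}$ multiplies every product by the same factor, so it cannot remove collisions. None of the listed fallbacks closes the gap:
\begin{itemize}
\item Rotating individual points of $Y_0$ changes its moments, and in general its harmonic strength.
\item ``Using the freedom in \cite{MN25}'' is precisely what has to be proved. The existence statement for a fixed finite $N$ gives you no control over the ratio set $Y_0Y_0^{-1}$.
\item Merging is ruled out, as you say.
\end{itemize}

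The missing idea is a continuous family of finite-strength sets combined with a finiteness argument. First, $y_1x_1=y_2x_2$ with $(y_1,x_1)\neq(y_2,x_2)$ happens if and only if $y_1y_2^{-1}=x_2x_1^{-1}\neq 1$. So injectivity is equivalent to $(Y_0Y_0^{-1}\setminus\{1\})\cap(XX^{-1}\setminus\{1\})=\emptyset$: the nontrivial ratios of the added set must avoid a fixed finite set.

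The paper achieves this by induction on $|N\setminus T_{\lambda_p}|$, adjoining one index $t$ at a time. Given $Y$ with $\Hst(Y)=M\cup T_{\lambda_p}$, it multiplies $Y$ by the five-point set $X^{1/t}(x)$ of \Cref{lem:singleton2}.
\begin{itemize}
\item A coincidence $h_i(x)h_j(x)^{-1}=\eta\in YY^{-1}\setminus\{1\}$ forces $g_i(x)g_j(x)^{-1}=\eta^t$ after taking $t$-th powers.
\item Each such equation has only finitely many solutions, because $g_ig_j^{-1}$ ($i\neq j$) is a nonconstant algebraic function of $x$. Hence the set of bad parameters is finite.
\item Choose an open interval avoiding the bad parameters. \Cref{lem:singleton2} gives uncountably many $x$ in any such interval with $\Hst(X^{1/t}(x))=\{t\}$, and each of them also satisfies the injectivity condition.
\end{itemize}
Then \Cref{lem:prod} yields $\Hst(Y\cdot X^{1/t}(x))=M\cup\{t\}\cup T_{\lambda_p}$.

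Doing it in one shot with a single $Y_0$ satisfying $\Hst(Y_0)=N$ would require the same perturbation argument for a multi-factor construction, whose ratio set is more complicated. The induction reduces everything to controlling the ratios of one five-point family against one finite set at each step.
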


To prove \Cref{thm:N}, we introduce the following lemmas.
Hereafter, for any subsets $X_1, X_2 \subset S^1$, their product $X_1 \cdot X_2$ is defined as follows:
\[
X_1 \cdot X_2 = \{cd \mid c \in X_1,\, d \in X_2\}.
\]

\begin{lem}[\cite{MN25}]\label{lem:prod}
If $X_1,X_2$ have harmonic strengths $T_1,T_2$ respectively, and $|X_1\cdot X_2|=|X_1||X_2|$, then $X_1\cdot X_2$ has harmonic strength $T_1\cup T_2$.
\end{lem}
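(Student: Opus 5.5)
The plan is to compute the power sums of the product set directly and use Lemma~\ref{lem:Hong}. Since $|X_1\cdot X_2|=|X_1||X_2|$, the map $X_1\times X_2\to X_1\cdot X_2$, $(c,d)\mapsto cd$, is a bijection. Hence every element of $X_1\cdot X_2$ arises from exactly one pair, and for each $k\in\NN$ the power sum factors:
\[
P_k(X_1\cdot X_2)=\sum_{c\in X_1}\sum_{d\in X_2}(cd)^k=\Bigl(\sum_{c\in X_1}c^k\Bigr)\Bigl(\sum_{d\in X_2}d^k\Bigr)=P_k(X_1)\,P_k(X_2).
\]

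Next we apply Lemma~\ref{lem:Hong} to each of the three sets. It gives $k\in\Hst(X_1\cdot X_2)$ if and only if $P_k(X_1)P_k(X_2)=0$. Since $\CC$ has no zero divisors, this holds exactly when $P_k(X_1)=0$ or $P_k(X_2)=0$, that is, when $k\in T_1$ or $k\in T_2$. Therefore $\Hst(X_1\cdot X_2)=T_1\cup T_2$.

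The only point that needs care is the cardinality hypothesis. Without it, the double sum counts coincident products with multiplicity, while $P_k$ of the set counts each point once, so the factorization would fail. The hypothesis $|X_1\cdot X_2|=|X_1||X_2|$ is exactly injectivity of the multiplication map, which removes this issue. I expect no further obstacle.
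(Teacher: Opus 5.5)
Your proof is correct and complete: the hypothesis $|X_1\cdot X_2|=|X_1||X_2|$ makes $(c,d)\mapsto cd$ injective, so $P_k(X_1\cdot X_2)=P_k(X_1)P_k(X_2)$, and \Cref{lem:Hong} together with the absence of zero divisors in $\CC$ gives $\Hst(X_1\cdot X_2)=T_1\cup T_2$. The paper does not reprove this lemma but imports it from \cite{MN25}; your moment-factorization argument is the natural proof and needs no further ingredient.
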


\begin{lem}[\cite{MN25}]\label{lem:singleton2}
  Let $D$ be any open interval satisfying
    \[\emptyset\neq D\subseteq\Bigl(-1,\tfrac12\Bigr)\setminus\{-\tfrac14\}.\]
  For any $x\in D$, define
\[
    X^{\frac{1}{t}}(x)\coloneqq 
\left\{1,e^{{i\arccos(x)}/{t}},e^{-{i\arccos(x)}/{t}},
e^{i\arccos\left(-x-\tfrac12\right)/t},
e^{-i\arccos\left(-x-\tfrac12\right)/t}
\right\},
\]
where $t$ is any natural number.
Then, there exist uncountably many $x\in D$ such that $\Hst(X^{\frac{1}{t}}(x))=\{t\}$.
\end{lem}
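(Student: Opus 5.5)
The plan is to compute the complex moments via \Cref{lem:Hong}, check that $t\in\Hst(X^{\frac1t}(x))$ for every admissible $x$, and then show that each other degree $k\neq t$ is killed by at most countably many $x$. Since $D$ is uncountable, this leaves uncountably many good $x$.

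\emph{Setup.} Write $\theta(x)=\arccos(x)$ and $\varphi(x)=\arccos(-x-\tfrac12)$. For $x\in(-1,\tfrac12)$ both $x$ and $-x-\tfrac12$ lie in $(-1,\tfrac12)$, so $\theta,\varphi\in(\pi/3,\pi)$. Moreover $\theta\neq\varphi$ exactly when $x\neq-\tfrac14$. Hence $\theta/t,\varphi/t\in(0,\pi/t)$ are distinct and nonzero, and the five points $1,e^{\pm i\theta/t},e^{\pm i\varphi/t}$ are pairwise distinct. Thus $|X^{\frac1t}(x)|=5$, and by \Cref{lem:Hong}
\[
P_k\bigl(X^{\frac1t}(x)\bigr)=g_k(x)\coloneqq 1+2\cos\bigl(k\theta(x)/t\bigr)+2\cos\bigl(k\varphi(x)/t\bigr).
\]
At $k=t$ this equals $1+2x+2(-x-\tfrac12)=0$, so $t\in\Hst$ for all $x\in D$.

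\emph{Countability step.} The function $\theta$ is real analytic on $(-1,1)$, and $\varphi$ is real analytic on $(-\tfrac32,\tfrac12)$. Hence each $g_k$ is real analytic on $(-1,\tfrac12)$. If $g_k\not\equiv0$ there, its zero set in $D$ is discrete, hence countable. Then $\bigcup_{k\neq t}\{x\in D: g_k(x)=0\}$ is countable, and every $x$ in its uncountable complement satisfies $\Hst(X^{\frac1t}(x))=\{t\}$. So everything reduces to the following claim:
\[
g_k\not\equiv0 \text{ on } (-1,\tfrac12) \quad\text{for every } k\neq t.
\]

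\emph{Main step (non-vanishing), which I expect to be the crux.} Suppose $g_k\equiv0$. I would first look near $x=-1$ and set $s=\sqrt{2(1+x)}$.
\begin{itemize}
\item Then $\theta=\pi-s+O(s^3)$, and $\theta$ is an odd analytic function of $s$ composed with $\pi-\cdot$, while $\varphi$ is analytic in $x$, hence even in $s$.
\item Splitting $\cos(k\theta/t)$ into its even and odd parts in $s$, the odd part is $\sin(k\pi/t)\sin\bigl(k(\pi-\theta)/t\bigr)$, whose leading term is $\sin(k\pi/t)\,(k/t)\,s$.
\item The identity $g_k\equiv0$ forces this odd part to vanish, so $\sin(k\pi/t)=0$, i.e.\ $t\mid k$.
\end{itemize}
(The symmetric expansion at $x=\tfrac12$, where $\varphi\to\pi$, gives the same conclusion and is not needed.)

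So $k=nt$ with $n\ge2$, and then by Chebyshev polynomials
\[
g_k(x)=1+2T_n(x)+2T_n(-x-\tfrac12),
\]
a polynomial in $x$. It remains to show this polynomial is nonzero.
\begin{itemize}
\item For $n$ even, the leading coefficient is $2\cdot2^{n-1}\cdot 2\neq0$.
\item For $n$ odd, $T_n(-x-\tfrac12)=-T_n(x+\tfrac12)$, so the $x^n$ terms cancel. The $x^{n-1}$ coefficient of $T_n(x)-T_n(x+\tfrac12)$ is $-2^{n-1}\cdot n\cdot\tfrac12\neq0$.
\end{itemize}
In both cases $g_k$ is a nonzero polynomial, contradicting $g_k\equiv0$.

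The delicate point is making the parity-in-$s$ argument rigorous. I would phrase it as follows. The function $h(s)=g_k(-1+s^2/2)$ is analytic in $s$ near $0$, and it equals the even function $1+2\cos(k\varphi/t)+(\text{even part})$ plus the odd function $2\sin(k\pi/t)\sin(k\sigma(s)/t)$, where $\sigma=\pi-\theta$ is odd in $s$ with $\sigma'(0)=1$. If $h\equiv0$, its odd part vanishes, which forces $\sin(k\pi/t)=0$.
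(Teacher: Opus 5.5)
Your proof is correct. The paper gives no proof of this lemma; it imports it from \cite{MN25}, so there is no in-paper argument to compare against. What you wrote is a complete, self-contained proof, and it gives the stronger conclusion that all but countably many $x\in D$ work. The remaining steps all check out: distinctness of the five points, $t\in\Hst$, the countable-union reduction, the reduction to $t\mid k$, and the Chebyshev computations for $n$ even and $n\ge 3$ odd.

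One sentence in your last paragraph should be rephrased. Taken literally with the principal $\arccos$, $h(s)=g_k(-1+s^2/2)$ is even in $s$ by construction. When $t\nmid k$ it behaves like $c\,|s|$ near $0$, so it is not analytic there. The object you actually need is
\[
H(s)=1+2\cos\bigl(k(\pi-\sigma(s))/t\bigr)+2\cos\bigl(k\varphi(s)/t\bigr),\qquad \sigma(s)=2\arcsin(s/2),\quad \varphi(s)=\arccos\bigl((1-s^2)/2\bigr),
\]
which is analytic on $|s|<\sqrt3$ and agrees with $h$ for $s>0$. The argument then runs as follows:
\begin{itemize}
\item If $g_k\equiv 0$, then $H\equiv 0$ on $(0,\sqrt3)$.
\item By the identity theorem, $H\equiv 0$ on all of $(-\sqrt3,\sqrt3)$.
\item Hence $H(s)-H(-s)=4\sin(k\pi/t)\sin(k\sigma(s)/t)\equiv 0$, which forces $t\mid k$.
\end{itemize}

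The same step can be done a little more directly by differentiating:
\[
g_k'(x)=\frac{2k}{t}\Bigl(\frac{\sin(k\theta/t)}{\sqrt{1-x^2}}-\frac{\sin(k\varphi/t)}{\sqrt{1-(x+\frac12)^2}}\Bigr).
\]
As $x\to-1^+$, the first term is unbounded whenever $\sin(k\pi/t)\neq 0$, while the second stays bounded. So $g_k$ is nonconstant when $t\nmid k$, and no parity bookkeeping is needed.
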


\begin{proof}[Proof of \Cref{thm:N}]
It is enough to consider the finite set
\[
  N^\circ\coloneq N\setminus T_{\lambda_p},
\]
because \(N\cup T_{\lambda_p}=N^\circ\cup T_{\lambda_p}\).
We prove by induction on \(|N^\circ|\) that there exists a finite set
\(X_{N^\circ}\subset\Sd^1\) such that
\[
  \Hst(X_{N^\circ})=N^\circ\cup T_{\lambda_p}.
\]

If \(N^\circ=\emptyset\), we take \(X_{N^\circ}=X\). Assume that the assertion
has been proved for a finite set \(M\subset N^\circ\), and choose
\(t\in N^\circ\setminus M\). Let \(Y\subset\Sd^1\) satisfy
\[
  \Hst(Y)=M\cup T_{\lambda_p}.
\]
We shall construct \(Y'\) with
\[
  \Hst(Y')=(M\cup\{t\})\cup T_{\lambda_p}.
\]

For \(x\in(-1,\tfrac12)\setminus\{-\tfrac14\}\), put
\[
X(x)\coloneqq
\left\{
1,e^{i\arccos(x)},e^{-i\arccos(x)},
e^{i\arccos(-x-\tfrac12)},e^{-i\arccos(-x-\tfrac12)}
\right\}.
\]
Write
\[
  X(x)=\{g_1(x),\ldots,g_5(x)\}
\]
and
\[
  X^{1/t}(x)=\{h_1(x),\ldots,h_5(x)\},
  \qquad h_i(x)^t=g_i(x).
\]

Define the bad set
\[
D_Y\coloneq
\left\{
x\in\left(-1,\tfrac12\right)\setminus\left\{-\tfrac14\right\}
\,\middle|\,
\left(X^{1/t}(x)\bigl(X^{1/t}(x)\bigr)^{-1}\setminus\{1\}\right)
\cap
\left(YY^{-1}\setminus\{1\}\right)
\neq\emptyset
\right\}.
\]
We show that \(D_Y\) is finite. If \(x\in D_Y\), then there exist
\(i\neq j\) and \(\eta\in YY^{-1}\setminus\{1\}\) such that
\[
  h_i(x)h_j(x)^{-1}=\eta.
\]
Raising both sides to the \(t\)-th power gives
\[
  g_i(x)g_j(x)^{-1}=\eta^t.
\]
Therefore
\[
D_Y\subset
\bigcup_{\substack{1\le i,j\le5\\ i\ne j}}
\ \bigcup_{\eta\in YY^{-1}\setminus\{1\}}
\left\{
x\in\left(-1,\tfrac12\right)\setminus\left\{-\tfrac14\right\}
\,\middle|\,
g_i(x)g_j(x)^{-1}-\eta^t=0
\right\}.
\]
For \(i\ne j\), the function \(g_i(x)g_j(x)^{-1}\) is a nonconstant algebraic
function on \((-1,\tfrac12)\setminus\{-\tfrac14\}\). Hence each zero set in the
right-hand side is finite. Since the union is finite, \(D_Y\) is finite.

Choose a nonempty open interval
\[
  D'\subset
  \left(-1,\tfrac12\right)\setminus\left(\left\{-\tfrac14\right\}\cup D_Y\right).
\]
By \Cref{lem:singleton2}, there exists \(x\in D'\) such that
\[
  \Hst(X^{1/t}(x))=\{t\}.
\]
Since \(x\notin D_Y\), we have
\[
  |Y\cdot X^{1/t}(x)|=|Y|\,|X^{1/t}(x)|.
\]
Thus, by \Cref{lem:prod},
\[
  \Hst(Y\cdot X^{1/t}(x))
  =
  \Hst(Y)\cup\Hst(X^{1/t}(x))
  =
  (M\cup T_{\lambda_p})\cup\{t\}.
\]
Set
\[
  Y'=Y\cdot X^{1/t}(x).
\]
This completes the induction. Therefore, for any finite subset \(N\subset\NN\),
there exists \(X_N\subset\Sd^1\) such that
\[
  \Hst(X_N)=N\cup T_{\lambda_p}.
\]
\end{proof}
Next, we prove the necessity of \Cref{thm:eqInverse} with the following lemma.
\begin{lem}\label{lem:useW}
    Let $X\subset\Sd^1$ and $\Hst(X)=N\cup T_{\lambda_p}$, where $N$ is a finite set and $T_{\lambda_p}$ is an infinite set which has the GCD property with period $\lambda_p$.
Then, there exists a cyclotomic design $X'$ such that $\Hst(X')=T_{\lambda_p}$ and the period of $X'$ divides $\lambda_p$.
\end{lem}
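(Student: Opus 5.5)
The plan is to pass from $X$ to its partition modulo $\mu_{\lambda_p}$, read off the polygon pieces, and then reassemble them with "generic" rotations so that the finite exceptional part $N$ disappears.

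\textbf{Step 1: the classes modulo $\mu_{\lambda_p}$ have common strength exactly $T_{\lambda_p}$.} Since $T_{\lambda_p}$ has the GCD property with period $\lambda_p$, it is $\lambda_p$-periodic. Hence it can be written as $\{t+k\lambda_p\mid t\in T,\ k\in\mathbb Z_{\ge0}\}$, where $T=T_{\lambda_p}\cap\{1,\dots,\lambda_p\}$ is finite. So $\Hst(X)=N\cup\{t+k\lambda_p\mid t\in T,\ k\ge0\}$, and \Cref{lem:hst} applies with $\lambda_0=\lambda_p$. Writing $X/\!\sim_{\mu_{\lambda_p}}=\{X_{j,\lambda_p}\}_{j=1}^{m}$, we obtain
\[
\bigcap_{j=1}^{m}\Hst(X_{j,\lambda_p})=\{t+k\lambda_p\mid t\in T,\ k\in\mathbb Z\}\cap\NN=T_{\lambda_p}.
\]
This intersection is nonempty, because $T_{\lambda_p}$ is infinite. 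A one-point set has empty harmonic strength, so every $X_{j,\lambda_p}$ has at least two points. Each is therefore a polygon subset whose period divides $\lambda_p$. By \Cref{lem:polygons} we may write $X_{j,\lambda_p}=X(f_j,\zeta_{\lambda_p},\alpha_j)$ with $0$-$1$ polynomials $f_j$ of degree $<\lambda_p$ and $f_j(1)=|X_{j,\lambda_p}|$.

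\textbf{Step 2: the construction.} Choose pairwise distinct nonzero algebraic real numbers $\theta_1,\dots,\theta_m$, put $\gamma_j=e^{i\theta_j}$, and define
\[
X'=\bigcup_{j=1}^m \gamma_j\, X(f_j,\zeta_{\lambda_p},1).
\]
We need three facts about $X'$:
\begin{enumerate}
\item The pieces $\gamma_jX(f_j,\zeta_{\lambda_p},1)$ are pairwise disjoint and lie in distinct $\sim_{\mu_\infty}$-classes. Indeed, $\gamma_i\gamma_j^{-1}\zeta_{\lambda_p}^a=1$ would give $e^{i(\theta_i-\theta_j)}=e^{2\pi i b/\lambda_p}$ for some integer $b$. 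Since $\theta_i-\theta_j$ is a nonzero algebraic number, this contradicts Lindemann--Weierstrass (equivalently, the transcendence of $\pi$).
\item Consequently $X'$ is a cyclotomic set whose $\sim_{\mu_\infty}$-classes are exactly these $m$ pieces, and its period divides $\lambda_p$.
\item Applying \Cref{lem:pol0} to the same polynomials $f_j$ shows that the intersection of the strengths of the pieces is again $T_{\lambda_p}$. Hence $T_{\lambda_p}\subset\Hst(X')$, and $X'$ is a cyclotomic design.
\end{enumerate}

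\textbf{Step 3: no extra indices (the main obstacle).} The crux is showing that $\Hst(X')$ contains nothing beyond $T_{\lambda_p}$; this is exactly why the rotations must be generic. For $k\notin T_{\lambda_p}$ we have
\[
P_k(X')=\sum_{j=1}^m f_j(\zeta_{\lambda_p}^k)\,e^{ik\theta_j},
\]
and at least one coefficient $f_j(\zeta_{\lambda_p}^k)$ is nonzero, since otherwise $k$ would lie in the intersection from Step 1. The coefficients are algebraic, and the exponents $ik\theta_1,\dots,ik\theta_m$ are distinct algebraic numbers. By the linear-independence form of Lindemann--Weierstrass, the sum is therefore nonzero, so $k\notin\Hst(X')$ by \Cref{lem:Hong}. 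This gives $\Hst(X')=T_{\lambda_p}$. The only points needing care are that the exponents remain distinct for every $k$ (true, since $k\ne0$) and the disjointness argument in Step 2.
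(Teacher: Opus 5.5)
Your proposal is correct and follows the paper's proof essentially step for step. You reduce via \Cref{lem:hst} to the $\sim_{\mu_{\lambda_p}}$-classes, encode them as $X(f_j,\zeta_{\lambda_p},\alpha_j)$, re-rotate by $e^{i\theta_j}$ with distinct algebraic $\theta_j$ (the paper takes $\theta_j=j$), and use Lindemann--Weierstrass both for disjointness and to exclude every index outside $T_{\lambda_p}$. Your version is also slightly more careful than the paper's: you explicitly check the arithmetic-progression hypothesis of \Cref{lem:hst}, verify that each class has at least two points, and show that the rotated pieces are exactly the $\sim_{\mu_\infty}$-classes of $X'$, which is what makes its period divide $\lambda_p$.
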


\Cref{lem:useW} can be proved using the following theorem.

\begin{thm}[\cite{B1975}, Theorem 1.4]\label{thm:W}
    Let $\{\alpha_{j}\}_{j=1}^m$ be any distinct algebraic numbers and let $\{\beta_{j}\}_{j=1}^m$ be any algebraic numbers.
If
    \[
      \sum_{j=1}^m\beta_je^{\alpha_j}=0
    \]
    holds, then $\beta_j=0$ for all $j$.
\end{thm}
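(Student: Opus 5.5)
This is the Lindemann--Weierstrass theorem in Baker's form, which we only quote. If one wants a self-contained argument, I would follow the classical Hermite--Lindemann route in three stages. The first two are algebraic reductions of the coefficients and exponents; the third is the analytic contradiction.

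\emph{First reduction: coefficients in $\mathbb{Z}$.} Suppose $\sum_{j}\beta_je^{\alpha_j}=0$ with not all $\beta_j$ zero; discarding zero terms, we may assume every $\beta_j\neq0$. Let $K$ be the Galois closure of $\mathbb{Q}(\beta_1,\ldots,\beta_m)$ and consider
\[
\prod_{\sigma\in\mathrm{Gal}(K/\mathbb{Q})}\Bigl(\sum_{j=1}^m\sigma(\beta_j)e^{\alpha_j}\Bigr)=0 .
\]
Expanding gives a sum $\sum_{\gamma}c_\gamma e^{\gamma}$, where each $\gamma$ is a sum of $|\mathrm{Gal}(K/\mathbb{Q})|$ of the $\alpha_j$. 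Each $c_\gamma$ is fixed by the Galois group, hence rational, and after clearing denominators an integer. The point to check is that not all $c_\gamma$ vanish. Order $\mathbb{C}$ lexicographically by real part, then imaginary part. The top exponent $|\mathrm{Gal}|\cdot\alpha_{\max}$ arises in exactly one way, with coefficient $\prod_\sigma\sigma(\beta_{\max})\neq0$. In this way the hypothesis reduces to the case of rational integer coefficients.

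\emph{Second reduction: Galois-closed exponents.} Next I symmetrize over the conjugates of the exponents. Let $L$ be the Galois closure of $\mathbb{Q}(\gamma\text{'s})$, and take $\prod_{\tau\in\mathrm{Gal}(L/\mathbb{Q})}\bigl(\sum_\gamma c_\gamma e^{\tau(\gamma)}\bigr)=0$. The same leading-term argument shows that the expanded relation is nontrivial. After this step we have $\sum_k b_k\sum_{\rho\in C_k}e^{\rho}=0$, where:
\begin{itemize}
\item the $C_k$ are disjoint complete sets of conjugates (roots of integer polynomials with leading coefficient $\ell$);
\item the $b_k\in\mathbb{Z}$ are not all zero.
\end{itemize}
The combinatorial heart of both reductions is the nonvanishing of the symmetrized relations, which rests on that unique-maximal-exponent observation.

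\emph{Analytic contradiction.} Here I use Hermite's integrals. Let $\rho_1,\ldots,\rho_n$ be all the exponents occurring, and let $p$ be a large prime. For each $i$ set
\[
f_i(x)=\ell^{np}\frac{\prod_{s=1}^n(x-\rho_s)^p}{x-\rho_i},
\]
so that $I_i(\rho)=\int_0^\rho e^{\rho-x}f_i(x)\,dx=e^{\rho}\sum_j f_i^{(j)}(0)-\sum_j f_i^{(j)}(\rho)$. Form $J_i=\sum_k b_k\sum_{\rho\in C_k}I_i(\rho)$. By the relation, the $e^{\rho}$ part drops out, so $J_i=-\sum_k b_k\sum_{\rho\in C_k}\sum_j f_i^{(j)}(\rho)$, which is an algebraic integer.

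For each $\rho$ and each $j$, the value $f_i^{(j)}(\rho)$ is divisible by $p!$, except when $j=p-1$ and $\rho=\rho_i$. That exceptional term equals $(p-1)!$ times a nonzero algebraic integer which, for $p$ large, is not divisible by $p$. Taking the norm of $J_1\cdots J_n$ (symmetric, hence a rational integer), I expect to get $|J_1\cdots J_n|\ge((p-1)!)^n$. On the other hand, the trivial bound $|I_i(\rho)|\le|\rho|e^{|\rho|}\max_{[0,\rho]}|f_i|$ gives $|J_1\cdots J_n|\le c^{p}$ for a constant $c$ independent of $p$. These two bounds contradict each other for large $p$.

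I expect the main technical obstacle to be the precise $p$-adic bookkeeping in this last step. Concretely, one must show that $J_i/(p-1)!$ is a nonzero algebraic integer whose norm is prime to $p$. This requires choosing $p$ larger than $\ell$, larger than every $|b_k|$, and larger than the norm of $\prod_{s\neq i}(\rho_i-\rho_s)$.
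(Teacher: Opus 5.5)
Your proposal is correct and agrees with the paper, which also does not prove this statement and simply quotes it from Baker's book; your optional sketch is Baker's own proof of Theorem 1.4. It uses Galois products to make the coefficients integral and the exponents Galois-closed, with nontriviality from the unique lexicographically maximal exponent, and then Hermite's integrals $J_i$ with $((p-1)!)^n\le|J_1\cdots J_n|\le c^{p}$. The one slip is in the final bookkeeping: the norm of $\prod_{s\neq i}(\rho_i-\rho_s)$ need not be an integer, so take $p$ larger than the norm of $\ell^{n-1}\prod_{s\neq i}(\rho_i-\rho_s)$ (or than the discriminant of $\ell\prod_s(x-\rho_s)$). Also, no norm of $J_1\cdots J_n$ is needed, since that product is already Galois-invariant and hence a rational integer.
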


\begin{proof}[Proof of \Cref{lem:useW}]
From \Cref{lem:hst}, 
    \begin{equation}
        T_{\lambda_p}=\bigcap_{j=1}^{m_{\lambda_p}}\Hst(X_{j,{\lambda_p}}), \label{eq:1}
    \end{equation}
    where $X/\!\sim_{\mu_{{\lambda_p}}}=\{X_{j,{\lambda_p}}\}_{j=1}^{m_{\lambda_p}}$.
Recall that $X_{j,\lambda_p}$ is a polygon subset whose period divides $\lambda_p$ for each $j\in\{1,\ldots,m_{\lambda_p}\}$. 
Therefore, from \Cref{lem:polygons}, there exist a polynomial $f_j$ and a rotation angle $\alpha_j\in\mathbb{T}$ such that $X_{j,{\lambda_p}}=X(f_j,\zeta_{{\lambda_p}},\alpha_j)$.
From \Cref{eq:1}, for all $t\in T_{\lambda_p}$ and $j\in\{1,\ldots,m_{\lambda_p}\}$, 
    \begin{equation}
        f_j(\zeta_{{\lambda_p}}^t)=0\label{eq:2}
    \end{equation}
    and for all $t'\notin T_{\lambda_p}$, there exist some $j$ such that 
    \begin{equation}
        f_j(\zeta_{{\lambda_p}}^{t'})\neq0.\label{eq:3}
    \end{equation}

    Let \(X'_j=X(f_j,\zeta_{\lambda_p},e^{ji})\).
Then any point of \(X'_j\) is of the form
\[
  \zeta_{\lambda_p}^{a}e^{ji}
\]
for some integer \(a\).

We claim that \(X'_{j_1}\cap X'_{j_2}=\emptyset\) whenever \(j_1\neq j_2\).
Suppose, to the contrary, that there exists
\(x\in X'_{j_1}\cap X'_{j_2}\) with \(j_1\neq j_2\).
Then there exist integers \(a_1,a_2\) such that
\[
  x=\zeta_{\lambda_p}^{a_1}e^{j_1i}
   =\zeta_{\lambda_p}^{a_2}e^{j_2i}.
\]
Hence
\[
  e^{(j_1-j_2)i}=\zeta_{\lambda_p}^{a_2-a_1}.
\]
The right-hand side is algebraic. On the other hand, since
\((j_1-j_2)i\) is a nonzero algebraic number, \(e^{(j_1-j_2)i}\)
is transcendental by \Cref{thm:W}. This is a contradiction.
Therefore \(X'_{j_1}\cap X'_{j_2}=\emptyset\).

Define
\[
  X'\coloneqq\bigsqcup_{j=1}^{m_{\lambda_p}}X'_j.
\]
From the definition, 
    \begin{align*}
        P_k(X')
        =\sum_{j=1}^{m_{\lambda_p}}P_k(X'_j)
        =\sum_{j=1}^{m_{\lambda_p}} f_j(\zeta_{\lambda_p}^k)e^{jki}.
\end{align*}
    Then, from \Cref{thm:W}, 
    \textcolor{black}{since 
    \[
    f_j(\zeta_{\lambda_p}^k)\in\overline{\mathbb{Q}}
    \]
    and the values $jki$ are distinct algebraic numbers for distinct $j$, }
    if $P_k(X')=0$, then $f_j(\zeta_{{\lambda_p}}^k)=0$ for all $j$.
From \Cref{eq:2} and \Cref{eq:3}, $P_k(X')=0$ if and only if $k\in T_{\lambda_p}$.
    Therefore, $\Hst(X')=T_{\lambda_p}$.
Additionally, since $X_j'^{{\lambda_p}}=\{e^{j{\lambda_p} i}\}$, the period of $X'$ divides ${\lambda_p}$.
\end{proof}

Using these lemmas, we can show \Cref{thm:eqInverse}.

\begin{proof}[Proof of \Cref{thm:eqInverse}]
  Assume that there exists an $X$ such that $\Hst(X)=T_{\lambda_p}$.
  Then, by \Cref{thm:N}, there exists an $X'$ such that $\Hst(X')=N\cup T_{\lambda_p}$.
  Conversely, assume that there exists an $X$ such that $\Hst(X)=N\cup T_{\lambda_p}$.
  Then, by \Cref{lem:useW}, there also exists an $X'$ such that $\Hst(X')=T_{\lambda_p}$.
\end{proof}



\subsection{The existence problem when $T$ has the GCD property}\label{ssec:inverseGCD}

In this subsection, we prove \Cref{thm:decide} restricted to when $T$ has the GCD property.
\begin{thm}\label{thm:decideGCD}
Let \(T_{\lambda_p}\) be an infinite subset of \(\mathbb N\) with the GCD
property whose period is \(\lambda_p\), and suppose that
\[
  T_{\lambda_p}
  =
  \{t\in\NN\mid\GCD(t,\lambda_p)\in T_S\},
\]
where
\[
  T_S\subset\{t\in\mathbb N\mid t\mid\lambda_p\}.
\]
Then an \(X\subset\Sd^1\) with \(\Hst(X)=T_{\lambda_p}\) exists if and only if
there exist a positive integer \(m\) and nonzero polynomials
\[
  f_1,\ldots,f_m\in\mathbb Q[x]
\]
such that the following three conditions hold:
\begin{itemize}
  \item[$\cdot$] \(\deg f_j<\lambda_p\) for all \(j=1,\ldots,m\).
  \item[$\cdot$] All coefficients of each \(f_j\) are \(0\) or \(1\).
  \item[$\cdot$]
  \[
    \GCD(f_1,\ldots,f_m,x^{\lambda_p}-1)
    =
    \prod_{t\in T_S}\Phi_{\lambda_p/t}.
  \]
\end{itemize}
\end{thm}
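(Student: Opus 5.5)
Both directions go through the polynomial description of cyclotomic sets from \Cref{ssec:hst}, with \Cref{lem:pol0} doing the translation.

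\emph{Sufficiency.} Suppose $f_1,\ldots,f_m$ satisfy the three conditions. We build $X$ as in the proof of \Cref{lem:useW}. Put $X'_j=X(f_j,\zeta_{\lambda_p},e^{ji})$ and $X=\bigsqcup_j X'_j$. By \Cref{thm:W}, $e^{(j_1-j_2)i}$ is transcendental for $j_1\neq j_2$, so the $X'_j$ are pairwise disjoint. Because $\deg f_j<\lambda_p$, distinct exponents give distinct points, so $f_j(1)=|X'_j|$ and
\[
P_k(X)=\sum_j f_j(\zeta_{\lambda_p}^k)e^{jki}.
\]
The coefficients $f_j(\zeta_{\lambda_p}^k)$ are algebraic and the exponents $jki$ are distinct algebraic numbers, so \Cref{thm:W} gives: $P_k(X)=0$ if and only if $f_j(\zeta_{\lambda_p}^k)=0$ for all $j$. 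This happens exactly when $\zeta_{\lambda_p}^k$ is a root of $\GCD(f_1,\ldots,f_m,x^{\lambda_p}-1)=\prod_{t\in T_S}\Phi_{\lambda_p/t}$, i.e.\ when $\GCD(k,\lambda_p)\in T_S$. This is the same computation as in \Cref{lem:ps} and \Cref{lem:pol0}. By \Cref{lem:Hong}, $\Hst(X)=T_{\lambda_p}$.

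One caveat: some $f_j$ may be monomials, so the corresponding $X'_j$ is a single point. The argument above never used $|X'_j|\ge2$; it only used $f_j(1)=|X'_j|$ and the root computation, so this causes no problem. If $T_S\neq\emptyset$, the gcd condition forces every $f_j$ to be a non-monomial anyway.

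\emph{Necessity.} Suppose $\Hst(X)=T_{\lambda_p}$, which is infinite. Write $T_{\lambda_p}$ as a finite union of residue classes mod $\lambda_p$; then \Cref{lem:hst} applies with $\lambda_0=\lambda_p$, $N=\emptyset$, and $T$ the set of residues in $\{1,\ldots,\lambda_p\}$ lying in $T_{\lambda_p}$. Since $\lambda_p$ is the period, $T_{\lambda_p}$ is invariant under shifts by $\lambda_p$, so \Cref{lem:hst} yields
\[
\bigcap_j \Hst(X_{j,\lambda_p})=T_{\lambda_p}.
\]
Each class $X_{j,\lambda_p}$ lies in a single coset of $\mu_{\lambda_p}$, so as in \Cref{lem:polygons} (which also covers singletons, via monomials) we can write $X_{j,\lambda_p}=X(f_j,\zeta_{\lambda_p},\alpha_j)$ with $f_j$ a nonzero $0$-$1$ polynomial of degree $<\lambda_p$. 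Write $\GCD(f_1,\ldots,f_m,x^{\lambda_p}-1)=\prod_{t\in T_{\lambda_p}'}\Phi_{\lambda_p/t}$ for some set $T_{\lambda_p}'$ of divisors of $\lambda_p$; this is always possible since the gcd divides $x^{\lambda_p}-1$. \Cref{lem:pol0} then gives
\[
\{t\in\NN\mid\GCD(t,\lambda_p)\in T_{\lambda_p}'\}=T_{\lambda_p}=\{t\in\NN\mid\GCD(t,\lambda_p)\in T_S\}.
\]
Each divisor $s$ of $\lambda_p$ is itself realised as $\GCD(s,\lambda_p)$, so $T_{\lambda_p}'=T_S$ and the gcd is exactly the required product.

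The main obstacle is the bookkeeping in necessity: checking that \Cref{lem:hst} really applies with modulus $\lambda_p$, and not just some multiple $\lambda_0$ coming from \Cref{lem:lech}. If a difficulty arises there, I would instead run \Cref{lem:hst} with $\lambda_0$ from \Cref{lem:lech}, pass to $\LCM(\lambda_0,\lambda_p)$, and use minimality of $\lambda_p$ to descend back to period $\lambda_p$.
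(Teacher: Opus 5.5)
Your proposal is correct and follows the paper's proof essentially step for step: sufficiency via the disjoint union of the sets $X(f_j,\zeta_{\lambda_p},e^{ji})$ and the Lindemann--Weierstrass theorem (\Cref{thm:W}), and necessity via \Cref{lem:hst} with $\lambda_0=\lambda_p$, followed by \Cref{lem:polygons} and \Cref{lem:ps}/\Cref{lem:pol0}. Your extra bookkeeping makes explicit what the paper leaves implicit: the monomial caveat, the direct check that \Cref{lem:hst} applies with modulus $\lambda_p$ and $N=\emptyset$ (so the fallback through \Cref{lem:lech} is not needed), and recovering $T_S$ from the fact that each divisor satisfies $s=\GCD(s,\lambda_p)$.
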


\begin{proof}
  Suppose that there exists an $X$ such that $\Hst(X)=T_{\lambda_p}$.
Then, from \Cref{lem:hst}, 
  \begin{equation}
    \bigcap_{j=1}^{m_{{\lambda_p}}}\Hst(X_{j,{\lambda_p}})=T_{\lambda_p}, \label{eq:hstpol}
  \end{equation}
  where $X/\!\sim_{\mu_{\lambda_p}}=\{X_{j,{\lambda_p}}\}_{j=1}^{m_{\lambda_p}}$.
  Since \(X_{j,\lambda_p}\) is a polygon subset whose period divides \(\lambda_p\), from \Cref{lem:polygons} there exist a polynomial \(f_j\) and an \(\alpha_j\in\mathbb T\) satisfying the following three conditions:
  \begin{itemize}
      \item[$\cdot$] The degree of $f_j$ is strictly less than ${\lambda_p}$.
      \item[$\cdot$] All coefficients of $f_j$ are $0$ or $1$.
      \item[$\cdot$] $X_{j,{\lambda_p}}=X(f_j,\zeta_{\lambda_p},\alpha_j)$.
\end{itemize}
  Now, suppose that 
  \[
    \GCD(f_j,x^{{\lambda_p}}-1)=\prod_{t\in T_j}\Phi_{\tfrac{{\lambda_p}}{t}}.
\]
  Then, from \Cref{lem:ps}, 
  \[
    \Hst(X_{j,{\lambda_p}})=\{t\in\mathbb N\mid \gcd(t,\lambda_p)\in T_j\},
\]
  Therefore, $\bigcap_{j=1}^{m_{\lambda_p}}T_j=T_S$, and the set $\{f_j\}_{j=1}^{m_{\lambda_p}}$ satisfies all conditions.

 Conversely, assume that there exist \(f_1,\ldots,f_m\) satisfying all conditions.
For each \(j=1,\ldots,m\), define
\[
  X_j:=X(f_j,\zeta_{\lambda_p},e^{ji}).
\]
By the same disjointness argument as in the proof of \Cref{lem:useW}, the sets \(X_1,\ldots,X_m\) are pairwise disjoint. Put
\[
  X:=\bigsqcup_{j=1}^{m}X_j.
\]

Let
\[
  \GCD(f_j,x^{\lambda_p}-1)
  =
  \prod_{t\in T_j}\Phi_{\lambda_p/t}.
\]
Then, by \Cref{lem:ps},
\[
  \Hst(X_j)=\{t\in\mathbb N\mid \GCD(t,\lambda_p)\in T_j\}.
\]

We claim that
\[
  \Hst(X)=\bigcap_{j=1}^{m}\Hst(X_j).
\]
Indeed, if \(r\in\bigcap_{j=1}^{m}\Hst(X_j)\), then
\[
  P_r(X)=\sum_{j=1}^{m}P_r(X_j)=0,
\]
so \(r\in\Hst(X)\). Conversely, suppose that \(P_r(X)=0\). Since
\[
  P_r(X)=\sum_{j=1}^{m} f_j(\zeta_{\lambda_p}^{r})e^{jri},
\]
and since \(f_j(\zeta_{\lambda_p}^{r})\in\overline{\mathbb Q}\) while the numbers \(jri\) are distinct algebraic numbers as \(j\) varies, \Cref{thm:W} implies
\[
  f_j(\zeta_{\lambda_p}^{r})=0
  \qquad (j=1,\ldots,m).
\]
Thus \(P_r(X_j)=0\) for all \(j\), and hence
\(r\in\bigcap_{j=1}^{m}\Hst(X_j)\). Therefore,
\[
  \Hst(X)=\bigcap_{j=1}^{m}\Hst(X_j)=T_{\lambda_p}.
\]
  and this demonstrates the existence of an $X$ with $\Hst(X)=T_{\lambda_p}$.
\end{proof}

\begin{ex}
The polynomials $\{f_j\}_{j=1}^{m_{\lambda_p}}$ correspond to $X/\!\sim_{\mu_{\lambda_p}}=\{X_{j,\lambda_p}\}_{j=1}^{m_{\lambda_p}}$.
For example, recall the setup in \Cref{rem:periodcollapse}; that is, let 
  \[X=\{1,e^{\frac{\pi i}{6}},e^{\frac{5\pi i}{6}},-1,e^{\frac{3\pi i}{2}}\}.\]
  Now, $\Hst(X)$ has the GCD property with period $6$.
We observe that $X/\!\sim_{\mu_{6}}=\{\{e^{\frac{\pi i}{6}},e^{\frac{5\pi i}{6}},e^{\frac{3\pi i}{2}}\},\{1,-1\}\}$.
  Then,
  \[\{e^{\frac{\pi i}{6}},e^{\frac{5\pi i}{6}},e^{\frac{3\pi i}{2}}\}=X(x^4+x^2+1,\zeta_6,e^{\frac{\pi i}{6}})\,\text{ and }\,\{1,-1\}=X(x^3+1,\zeta_6,1).\]
  Therefore, 
  \[X=X(x^4+x^2+1,\zeta_6,e^{\frac{\pi i}{6}})\sqcup X(x^3+1,\zeta_6,1).\] 
  In this case, the polynomials $\{x^4+x^2+1,x^3+1\}$ satisfy the conditions of \Cref{thm:decideGCD}.
In fact, $\GCD(x^4+x^2+1,x^3+1,x^6-1)=\Phi_6$.
\end{ex}

\subsection{Proof of \Cref{thm:decide} and some applications of \Cref{thm:decide}}\label{ssec:inversepro}

Then, we can prove \Cref{thm:decide} using \Cref{thm:eqInverse} and \Cref{thm:decideGCD}.
\begin{proof}[Proof of \Cref{thm:decide}]
  Since $T$ has the weak GCD property with period $\lambda_p$, there exist a finite subset $N$ and a set $T_{\lambda_p}$ having the GCD property with period $\lambda_p$ such that
    \[
        T=N\cup T_{\lambda_p}.
\]
  From \Cref{thm:eqInverse}, the existence of an $X$ with $\Hst(X)=N\cup T_{\lambda_p}$ is equivalent to the existence of an $X'$ with $\Hst(X')=T_{\lambda_p}$.
Since $T_{\lambda_p}$ has the GCD property with period $\lambda_p$, from \Cref{thm:decideGCD}, the existence of such an $X'$ is equivalent to the existence of the polynomials $\{f_j\}_{j=1}^{m_{\lambda_p}}$.
\end{proof}

We can apply Theorem~\ref{thm:decide} to demonstrate the existence of infinite strength spherical designs.
Let \(\lambda\in\NN\) and let \(T_\lambda\) be a subset of \(\NN\) possessing the GCD property with period \(\lambda\). The nonzero \(0\)-\(1\) polynomials
of degree strictly less than \(\lambda\) can be enumerated as follows. For \(v=(a_0,\ldots,a_{\lambda-1})\in\mathbb F_2^\lambda\setminus\{0\}\), define
\[
  f_v\coloneq (1,x,\ldots,x^{\lambda-1})\cdot v
  =
  \sum_{i=0}^{\lambda-1}a_i x^i .
\]
Then
\[
  F_\lambda\coloneq
  \{f_v\mid v\in\mathbb F_2^\lambda\setminus\{0\}\}
\]
is precisely the set of all nonzero \(0\)-\(1\) polynomials of degree strictly less than \(\lambda\). Hence \(|F_\lambda|=2^\lambda-1\), and \Cref{thm:decide} implies that the existence of an infinite strength spherical design with period \(\lambda\) can be decided by testing at most \(2^{2^\lambda-1}\) subfamilies of \(F_\lambda\).
For example, we can prove the non-existence of such designs when $\lambda$ is sufficiently small by enumerating the polynomials, as demonstrated in the following proposition.
\begin{prop}\label{prop:six}
Let 
\[
  T=\{j\in\NN\mid \GCD(j,6)\in\{2,3\}\}.
\]
Then, for any $d\in\mathbb{N}$ and for any finite set $N\subset\mathbb{N}$, there does not exist an $X\subset\Sd^d$ such that $\Hst(X)=N\cup T$.
\end{prop}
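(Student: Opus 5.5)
The plan is to split into the cases $d\ge 2$ and $d=1$. The case $d\ge2$ follows from the structure theorems of \Cref{sec:inf}. The case $d=1$ reduces, via \Cref{thm:decide}, to a finite check on $0$-$1$ polynomials of degree less than $6$.

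\textbf{Case $d\ge2$.} Since $T$ is infinite, any $X$ with $\Hst(X)=N\cup T$ is an infinite strength spherical design. By \Cref{thm:antipodal} such an $X$ is antipodal, so by \Cref{thm:HSTanti} we get $\{2k-1\mid k\in\NN\}\subset\Hst(X)$. However, an odd $j$ lies in $T$ only when $3\mid j$. Hence every odd $j$ coprime to $3$ (namely $1,5,7,11,\dots$) would have to lie in the finite set $N$, which is impossible. So no such $X$ exists for $d\ge2$.

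\textbf{Case $d=1$.} First I will check that $N\cup T$ has the weak GCD property with period exactly $\lambda_p=6$, and that its representation is $T_S=\{2,3\}$. Periods $1$, $2$ and $3$ cannot work up to finitely many exceptions: for instance $2\in T$ but $4\in T$ and $6\notin T$ prevents period $2$ describing even numbers uniformly, and $3\in T$ but $6\notin T$ rules out period $3$. I also need that any representation with period $6$ yields the same $T_S$ modulo finite sets, which holds because the classes $\GCD(j,6)=s$ are infinite. Then \Cref{thm:decide} says that existence is equivalent to finding nonzero $0$-$1$ polynomials $f_1,\dots,f_m$ of degree $<6$ with
\[
\GCD(f_1,\ldots,f_m,x^6-1)=\Phi_{3}\Phi_{2}.
\]
In particular every $f_j$ must be divisible by $\Phi_2\Phi_3=x^3+2x^2+2x+1$, and at least one $f_j$ must not be divisible by $\Phi_6$.

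\textbf{Finite check.} Write $f=(x^3+2x^2+2x+1)(a+bx+cx^2)$ with $a,b,c\in\mathbb{Q}$; the cofactor has degree $\le2$ because $\deg f<6$. The coefficients of $f$ are
\[
a,\ 2a+b,\ 2a+2b+c,\ a+2b+2c,\ b+2c,\ c,
\]
and each must lie in $\{0,1\}$. This forces $a,c\in\{0,1\}$, and I will run through the four cases.
\begin{itemize}
\item[$\cdot$] If $a=c=0$, then $f=b(x^4+2x^3+2x^2+x)$, and the coefficient $2b$ forces $b=0$, so $f=0$.
\item[$\cdot$] If $a=0$ and $c=1$, then $b+2\in\{0,1\}$ gives $b<0$, contradicting $2a+b=b\ge0$.
\item[$\cdot$] If $a=1$ and $c=0$, then $b\in\{0,1\}$ from $b+2c$ contradicts $2+b\le1$.
\item[$\cdot$] If $a=c=1$, then $b=-1$ is forced, giving $f=1+x+x^2+x^3+x^4+x^5=\Phi_2\Phi_3\Phi_6$.
\end{itemize}
Thus every admissible $f_j$ equals $\Phi_2\Phi_3\Phi_6$, so $\Phi_6$ divides the joint GCD. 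This contradicts the required equality, so no such $X\subset\Sd^1$ exists.

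The main obstacle I anticipate is the bookkeeping in applying \Cref{thm:decide}. One must confirm that the period of $N\cup T$ is indeed $6$ and that the induced $T_S$ is exactly $\{2,3\}$ for every finite $N$, so that the polynomial criterion applies. The polynomial computation itself is routine.
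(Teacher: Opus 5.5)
Your proof is correct and follows the paper's argument. For $d=1$ the paper likewise notes that the only nonzero $0$-$1$ polynomial of degree $<6$ divisible by $\Phi_2\Phi_3$ is $1+x+\cdots+x^5=\Phi_2\Phi_3\Phi_6$, and then applies \Cref{thm:decide}. For $d\ge2$ the paper simply cites \Cref{thm:hst} (period $2$), which is the same fact you extract directly from \Cref{thm:antipodal} and \Cref{thm:HSTanti}.

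Your verification that $\lambda_p=6$ and $T_S=\{2,3\}$ goes beyond what the paper writes, but it has two small holes. First, it should also rule out $\lambda=4,5$. For instance, with $\lambda=4$ the odd numbers form a single gcd-class, yet they contain infinitely many elements of $T$ and infinitely many non-elements. Second, since finitely many exceptions are allowed, it should use infinite families of witnesses, such as $j\equiv2\pmod 6$ versus $j\equiv0\pmod 6$, rather than the single numbers $2,4,6$.
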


\begin{proof}
    From \Cref{thm:hst}, such an $X\subset\Sd^d$ does not exist when $d\geq2$.
Therefore, we focus strictly on the existence of an $X$ in $\Sd^1$.
Let $v=(a_0,\ldots,a_5)\in\mathbb{F}_2^6\setminus\{0\}$ and define
    \[
        f_{v}\coloneq(1,x,x^2,x^3,x^4,x^5)\cdot v,
    \]
    where $\cdot$ denotes the inner product.
Among the nonzero \(0\)-\(1\) polynomials
\[
  \{f_v\mid v\in\mathbb F_2^6\setminus\{0\}\}
\]
of degree less than \(6\), the divisibility
\[
  \prod_{t\in\{2,3\}}\Phi_{6/t}
  =
  \Phi_3\Phi_2
  =
  x^3+2x^2+2x+1
  \mid f_v
\]
holds if and only if
\[
  v=(1,1,1,1,1,1).
\]
For this vector, we have
\[
  f_v=1+x+x^2+x^3+x^4+x^5
      =\Phi_2\Phi_3\Phi_6.
\]
Hence, if every \(f_j\) is divisible by \(\Phi_2\Phi_3\), then every \(f_j\)
is equal to \(1+x+\cdots+x^5\), and therefore
\[
  \GCD(f_1,\ldots,f_m,x^6-1)
\]
is divisible by \(\Phi_6\). In particular, it cannot be exactly
\[
  \prod_{t\in\{2,3\}}\Phi_{6/t}=\Phi_2\Phi_3.
\]
Therefore, there do not exist \(f_1,\ldots,f_m\) satisfying
\[
  \GCD(f_1,\ldots,f_m,x^6-1)=\prod_{t\in\{2,3\}}\Phi_{6/t}.
\]
    Then, from \Cref{thm:decide}, there does not exist an $X\subset\Sd^1$ such that $\Hst(X)=N\cup T$.
\end{proof}

Additionally, for specific infinite sets, we can demonstrate the existence of $X$ by explicitly providing the corresponding polynomials.
\begin{prop}\label{prop:single}
  Let $k$ and \textcolor{black}{$\lambda_p$ be any natural numbers satisfying $k\mid\lambda_p,\quad k<\lambda_p$, and define
  \[
    T_{k,\lambda_p}\coloneq\{j\in\NN\mid \GCD(j,\lambda_p)=k\},
\]
where $\lambda_p$ is the period of $T_{k,\lambda_p}$.
  Then, there exists an $X\subset\Sd^1$ such that $\Hst(X)=T_{k,\lambda_p}$.}
\end{prop}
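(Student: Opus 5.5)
The plan is to apply \Cref{thm:decideGCD}, or equivalently \Cref{thm:decide} with \(N=\emptyset\) and \(T_S=\{k\}\). It then suffices to exhibit nonzero \(0\)-\(1\) polynomials \(f_1,\ldots,f_m\) of degree \(<\lambda_p\) with
\[
  \GCD(f_1,\ldots,f_m,x^{\lambda_p}-1)=\Phi_{\lambda_p/k}.
\]
Since \(x^{\lambda_p}-1\) is squarefree, this gcd is determined by the common zeros of the \(f_j\) inside \(\mu_{\lambda_p}\). Recall that \(\zeta_{\lambda_p}^t\) is a zero of \(\Phi_{\lambda_p/k}\) exactly when \(\GCD(t,\lambda_p)=k\). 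So the task is to find a family of \(0\)-\(1\) polynomials whose common zeros among the \(\zeta_{\lambda_p}^t\) are exactly those with \(\GCD(t,\lambda_p)=k\).

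Put \(n=\lambda_p/k\); then \(n>1\) because \(k<\lambda_p\). Observe that
\[
  \GCD(t,\lambda_p)=k \iff k\mid t \text{ and } pk\nmid t \text{ for every prime } p\mid n .
\]
For each fixed prime \(p\mid n\), the condition \(k\mid t,\ pk\nmid t\) says precisely that \(\zeta_{pk}^t\) is a primitive \(p\)-th root of unity. Among \(pk\)-th roots of unity, the zeros of \(1+y+\cdots+y^{p-1}\) are exactly the primitive \(p\)-th roots of unity.

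Since \(pk\mid\lambda_p\), I will set \(e_p=\lambda_p/(pk)\) and define
\[
  f_p(x)=\sum_{i=0}^{p-1}x^{ie_p}.
\]
This has \(0\)-\(1\) coefficients and degree \((p-1)e_p<\lambda_p\). Moreover \(f_p(\zeta_{\lambda_p}^t)=\sum_{i<p}(\zeta_{pk}^{t})^i\). By the observation above, \(f_p(\zeta_{\lambda_p}^t)=0\) if and only if \(k\mid t\) and \(pk\nmid t\).

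Taking the family \(\{f_p\}_{p\mid n,\ p \text{ prime}}\), nonempty because \(n>1\), its common zeros in \(\mu_{\lambda_p}\) are exactly the \(\zeta_{\lambda_p}^t\) with \(\GCD(t,\lambda_p)=k\). These are the zeros of \(\Phi_{\lambda_p/k}\). Because the gcd with \(x^{\lambda_p}-1\) is the squarefree product of the cyclotomic factors having a common zero, the required identity \(\GCD(\{f_p\},x^{\lambda_p}-1)=\Phi_{\lambda_p/k}\) follows. \Cref{thm:decideGCD} then gives \(X\) with \(\Hst(X)=T_{k,\lambda_p}\). Concretely, \(X=\bigsqcup_p X(f_p,\zeta_{\lambda_p},e^{ji})\) with a distinct index \(j\) for each prime \(p\).

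The only delicate point is the bookkeeping in the equivalence \(\GCD(t,\lambda_p)=k \iff (k\mid t \text{ and } pk\nmid t\ \forall p\mid n)\), together with checking that \(f_p(\zeta_{\lambda_p}^t)\) really reduces to a sum of powers of \(\zeta_{pk}^t\). For the equivalence: if \(k\mid t\), write \(t=ks\); then \(\GCD(t,\lambda_p)=k\,\GCD(s,n)\), which equals \(k\) iff no prime \(p\mid n\) divides \(s\). For the reduction, use \(\zeta_{\lambda_p}^{e_p}=\zeta_{pk}\). I expect no real obstacle beyond this. The hypothesis that \(\lambda_p\) is the period of \(T_{k,\lambda_p}\) is used only so that \Cref{thm:decideGCD} applies verbatim.
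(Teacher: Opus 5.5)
Your proposal is correct and matches the paper's proof. Your $f_p=\sum_{i<p}x^{i\lambda_p/(pk)}$ is exactly the paper's $\Phi_{p_j}(x^{q/p_j})$ with $q=\lambda_p/k$, and both arguments conclude via \Cref{thm:decide} with $T_S=\{k\}$. The only difference is in checking $\GCD(\{f_p\},x^{\lambda_p}-1)=\Phi_{\lambda_p/k}$: you evaluate at $\zeta_{\lambda_p}^t$ and use that $x^{\lambda_p}-1$ is squarefree, whereas the paper factors each $f_j$ with the identity $\Phi_n(x^m)=\prod\Phi_{nc}(x)$, so your check is slightly more self-contained.
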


\begin{proof}
\textcolor{black}{Put \(q=\lambda_p/k\), and write
\[
  q=p_1^{a_1}\cdots p_\ell^{a_\ell}.
\]
For each \(j\), 
\[
f_j(x)=1+x^{q/p_j}+x^{2q/p_j}+\cdots+x^{(p_j-1)q/p_j}.
\]
Equivalently, 
\begin{align*}
 f_j(x)&=\Phi_{p_j}\left(x^{q/p_j}\right)\\
    &=\prod_{\substack{c\mid \frac{q}{p_j} \\ \GCD(p_j,\frac{q}{cp_j})=1}}\Phi_{p_j c}\left(x\right)\\
    &=\prod_{c\mid \frac{q}{q_j}}\Phi_{q_j c}\left(x\right), 
\end{align*}
where $q_j=p_j^{a_j}$.
Note that in the second equation we used the following property (\cite{RN1944}, Exercise 2.57)\footnote{Exercise 2.57 only state this for the case where $m$ is prime. However, using induction, we can obtain the result when $m$ is not a prime number.}:
\[
    \Phi_n(x^m)=\prod_{\substack{c\mid m\\\GCD(n,\frac{m}{c})=1}}\Phi_{nc}(x).
\]
Therefore, 
\[
    \GCD(f_1,\ldots, f_\ell)=\Phi_q.
\]
This implies
\[
  \GCD(f_1,\ldots, f_\ell, x^{\lambda_p}-1)=\Phi_q.
\]
Since \(q=\lambda_p/k\), we have
\[
  \Phi_q=\Phi_{\lambda_p/k}.
\]
Thus the third condition of \Cref{thm:decide} is satisfied with
\[
  T_S=\{k\}.
\]
Therefore, by \Cref{thm:decide}, there exists an \(X\subset\Sd^1\) such that
\[
  \Hst(X)=T_{k,\lambda_p}.
\]
}
\end{proof}

\begin{rem}
\textcolor{black}{
When \(k=\lambda_p\), such an \(X\) does not exist, as we have seen in \Cref{rem:1}.}
\end{rem}

\begin{rem}
Note that the assumption in \Cref{prop:single} that \(\lambda_p\) is the period does
not lose generality.
Let \(k,\lambda\in\mathbb N\) satisfy \(k\mid\lambda\) and \(k<\lambda\), and put
\[
  T_{k,\lambda}:=\{j\in\mathbb N\mid \GCD(j,\lambda)=k\}.
\]
Write
\[
  \lambda=\prod_{p\mid\lambda}p^{a_p},
  \qquad
  k=\prod_{p\mid\lambda}p^{b_p},
\]
where \(0\leq b_p\leq a_p\). Define
\[
  \lambda_0
  :=
  \prod_{\substack{p\mid\lambda\\ b_p<a_p}}p^{b_p+1}
  \prod_{\substack{p\mid\lambda\\ b_p=a_p}}p^{b_p}.
\]
Then
\[
  T_{k,\lambda}
  =
  \{j\in\mathbb N\mid \GCD(j,\lambda_0)=k\}.
\]
Indeed, for each prime \(p\mid\lambda\), the condition
\[
  \GCD(j,\lambda)=k
\]
is equivalent to the following two cases:
\[
  v_p(j)=b_p \quad \text{if } b_p<a_p,
\]
and
\[
  v_p(j)\geq b_p \quad \text{if } b_p=a_p.
\]
These are exactly the conditions imposed by
\[
  \GCD(j,\lambda_0)=k.
\]
Moreover, the exponent of each prime in \(\lambda_0\) is minimal for detecting the corresponding condition. Indeed, if \(b_p<a_p\), then the condition is
\(v_p(j)=b_p\), so one must distinguish \(v_p(j)=b_p\) from \(v_p(j)>b_p\); hence the modulus must contain \(p^{b_p+1}\). If \(b_p=a_p\), then the condition is \(v_p(j)\ge b_p\), so one must distinguish \(v_p(j)\ge b_p\) from \(v_p(j)<b_p\); hence the modulus must contain \(p^{b_p}\). Therefore every period for \(T_{k,\lambda}\) is divisible by \(\lambda_0\). Since \(\lambda_0\) itself gives the above gcd description, \(\lambda_0\) is the period of \(T_{k,\lambda}\).

Therefore, if \(\lambda_p\) denotes the period of \(T_{k,\lambda}\), then
\[
  T_{k,\lambda}
  =
  \{j\in\mathbb N\mid \GCD(j,\lambda_p)=k\}.
\]
Thus the assumption in \Cref{prop:single} that \(\lambda_p\) is the period does
not lose generality.
\end{rem}

\begin{rem}
  We can also explicitly construct an $X\subset\Sd^1$ with $\Hst(X)=T_{k,\lambda_p}$.
Let $q=\frac{\lambda_p}{k}=p_1^{a_1}\cdots p_\ell^{a_\ell}$ and let $f_{j}$ be defined as in the proof of Proposition~\ref{prop:single}.
Then, define 
  \[
    X_{p_j,\lambda_p,k}\coloneqq X(f_{j},\zeta_{\lambda_p},e^{ji})
  \]
  and suppose 
  \[
    X=\bigsqcup_{j=1}^\ell X_{p_j,\lambda_p,k}.
\]
  Then, $\Hst(X)=T_{k,\lambda_p}$.
  For example, let $k=1$ and $\lambda_p=15$.
  Then, $q=3\cdot 5=p_1\cdot p_2$, and we have $f_{1}=1+x^{5}+x^{10}$ and $f_{2}=1+x^3+x^6+x^{9}+x^{12}$.
Therefore, $X_{3,15,1}$ is a triangle rotated by $e^i$, and $X_{5,15,1}$ is a pentagon rotated by $e^{2i}$.
From Theorem~\ref{thm:W}, \textcolor{black}{since $e^{ri}$ and $e^{2ri}$ are linearly independent over the algebraic numbers for each $r\in\mathbb{N}$,} $P_k(X_{3,15,1}\sqcup X_{5,15,1})=0$ holds if and only if both $P_k(X_{3,15,1})=0$ and $P_k(X_{5,15,1})=0$.
This implies 
  \[
    \Hst(X_{3,15,1}\sqcup X_{5,15,1})=\{j\in\NN\mid \GCD(j,15)=1\}.
\]
\end{rem}

\section{Conclusion}

In this paper, we focused on the existence of infinite strength spherical designs and the properties of their harmonic strength.
In Section~\ref{sec:inf}, we showed that infinite strength spherical designs with dimension $d \ge 2$ are always antipodal.
We obtained this result by utilizing an inequality for Jacobi polynomials, and we also deduced an upper bound for the maximum elements of the harmonic strength.
Section~\ref{sec:dim1} treats the case where the dimension is $1$, in which there are many infinite strength spherical designs that are not antipodal.
We demonstrated that even in this case, an infinite strength spherical design is always a cyclotomic design, which is a generalization of an antipodal set.
This proof relies on the Skolem-Mahler-Lech theorem concerning the zeros of complex linear recurrence sequences.
We also proved that the harmonic strength of a cyclotomic design possesses the weak GCD property.
In Section~\ref{sec:inverse}, we considered the inverse problem; that is, given an infinite set $T\subset\NN$, does there exist an $X\subset\Sd^1$ such that $\Hst(X)=T$?
To resolve this problem, we established Theorem~\ref{thm:decide} and showed that it can always be decided via a finite computation.
In our previous work, we established the existence of an $X\subset\Sd^1$ with $\Hst(X)=T$ for any finite subset $T\subset\NN$.
Therefore, the existence problem for spherical designs in $\Sd^1$ is completely settled.
However, the problem of optimality remains.
Let $T$ be a (possibly infinite) subset of $\NN$, and define the quantity $N(T,d)$ as follows:
\[
  N(T,d)\coloneq\min\{|X|\mid X\subset\Sd^d,\, \Hst(X)=T\},
\]
where we define $\min\emptyset=0$.
In general, Theorem~\ref{thm:decide} provides a finite computational method to determine whether $N(T,1)=0$ when $T$ is infinite and has the weak GCD property.
In fact, when $T$ has the GCD property, we can also compute $N(T,1)$ through finite calculations.
This is because there exist finite sets of polynomials corresponding to an $X$ with $\Hst(X)=T$.
However, this approach does not address the minimization of $N(T,1)$ when $T$ possesses only the weak GCD property.
\section*{Acknowledgment}
During this research, the authors received consistent and kind guidance from their supervisors, Professor Tsuyoshi Miezaki and Professor Akihiro Munemasa.
The many beneficial suggestions received from them and the lively discussions were essential for the completion of this research. Ryutaro Misawa was supported by JSPS KAKENHI Grant Number JP26KJ0570.
Yusaku Nishimura was supported by the Yoshida Scholarship Foundation through the Doctor 21 program and a Waseda Research Institute for Science and Engineering Grant-in-Aid for Young Scientists (Early Bird).

\bibliographystyle{plain}

\bibliography{sph}

\end{document}